\documentclass[12pt]{article}

\usepackage{graphicx} 

\usepackage{amsfonts}
\usepackage{amsmath}
\usepackage{amssymb}
\usepackage{amsthm}
\usepackage{enumitem}
\usepackage[left=3cm,right=3cm,top=3.5cm,bottom=3.5cm]{geometry}

\usepackage{tikz-cd}
\usepackage{arydshln}
\usepackage{tikz}

\usepackage[colorlinks=true]{hyperref}

\title{Holomorphic geometric structures on Hopf manifolds}
\author{Matthieu Madera}
\date{}

\newtheorem{Theorem}{Theorem}
\newtheorem{Lemma}{Lemma}[subsection]
\newtheorem{Proposition}{Proposition}[subsection]

\newtheorem{Remark}{Remark}[subsection]
\newtheorem{Definition}{Definition}[subsection]
\newtheorem{Corollary}{Corollary}[subsection]
\newtheorem{Example}{Example}[subsection]

\DeclareMathOperator\im{im}

\DeclareMathOperator\Hom{Hom}
\DeclareMathOperator\Vect{Vect}

\begin{document}

\maketitle

\begin{abstract}
    We construct integrable holomorphic $G$-structures and flat holomorphic Cartan geometries on every complex Hopf manifold, without using the normal forms given by the Poincaré-Dulac Theorem. We provide a new proof of the latter using charts adapted with the geometric structures.
\end{abstract}

\tableofcontents

\section{Introduction}\label{Intro}

We study complex Hopf manifolds. This is a class of compact complex manifolds of any dimension $n\geq 2$ that do not admit kählerian metrics. As real manifolds of dimension $2n$, they are in fact isomorphic to the product $\mathbb S^1\times\mathbb S^{2n-1}$ and hence do not have the topology of symplectic manifolds.

The complex structures on $\mathbb S^1\times\mathbb S^{2n-1}$, turning it into a Hopf manifold, are classified with the help of a famous dynamics theorem, the \textbf{Poincaré-Dulac Theorem} (\cite{A88}, p.194). This theorem provides normal forms for Hopf manifolds.

The aim of this paper is to construct holomorphic geometric structures on Hopf manifolds of any dimension, without using the existence of normal forms. Throughout the paper, the term ``geometric structure" can stand for several concepts. The main point of view is the one of \textbf{$G$-structures} of higher order (Section \ref{G structures}), but we will also discuss about Gromov geometric structures (Section \ref{G structures}) and \textbf{Cartan geometries} (Section \ref{Sect Cart Geo}). 

The reason why we do not want to take advantage of the Poincaré-Dulac Theorem is that one of the goals of the paper is to show the close relationship between the existence of a normal form for a given Hopf manifold and the existence of a holomorphic geometric structure on it, which can be seen as an integrable $G$-structure, or as a flat Cartan geometry.

In that sense, without any need of normal forms, we construct holomorphic $G$-structures of higher order on every Hopf manifold and we show that they induce holomorphic Cartan geometries. Studying these structures, we prove that some $G$-structures are integrable (Definition \ref{Integrability}) and that the induced Cartan geometry is flat. We infer that there exist special charts adapted with the integrable structure, providing a normal form for the Hopf manifold, and proving the Poincaré-Dulac Theorem.

\subsection{Introduction to Hopf manifolds}\label{History}

A (primary) Hopf manifold is a compact complex manifold of dimension $n\geq 2$, having its universal covering space biholomorphic to $\mathbb C^n\setminus \{0\}$ and having an infinite cyclic fundamental group. As an example, Hopf \cite{Ho48} first considered actions of $\mathbb Z$ on $\mathbb C^2\setminus\{0\}$ of the type 
$$
\begin{matrix}
    \mathbb Z\times \mathbb C^2\setminus \{0\} & \longrightarrow & \mathbb C^2\setminus \{0\} \\
    (k,z) & \longmapsto & (\frac{1}{2})^kz
\end{matrix}
$$
This action of $\mathbb Z$ preserves the standard torsion-free flat affine connection of $\mathbb C^2\setminus\{0\}$, and therefore, the Hopf surface defined as the quotient of $\mathbb C^2\setminus \{0\}$ under this action carries naturally a torsion-free flat affine connection. More generally, any Hopf manifold obtained as the quotient of $\mathbb C^n\setminus\{0\}$ by the action generated by an invertible linear map is naturally endowed with a torsion-free flat affine connection. Those Hopf manifolds are called \textbf{linear Hopf manifolds}. 

A torsion-free affine connection can be seen from two points of view. First, we can see it as a Cartan geometry with model the affine group of $\mathbb C^n$ and its closed subgroup the general linear group (see Section \ref{Sect Cart Geo}). On the other hand, it can be seen as a $GL(n,\mathbb C)$-structure of order $2$ (Example \ref{example G struc}).

Borcea (\cite{Bo81}, p.6) proved that the fundamental group of a Hopf manifold is generated by an element that has the topological properties of what is called a contraction of $\mathbb C^n$ at $0$ (Section \ref{Resonances}). Wehler (\cite{W81} for surfaces) and Haefliger (\cite{Ha85}, Appendix) did work on the Kuranishi family of such a Hopf manifold. They took advantage of a well-known theorem of holomorphic dynamics, the Poincaré-Dulac Theorem (\cite{A88}, p.194). A contraction can be put, with a local holomorphic change of coordinates, under a normal form. This normal form is a resonant polynomial map (\cite{A88}, p.194, Definition \ref{def reson} for the definition of resonance). It appears that the space of normal forms of the germ of the contraction is closely related with the local deformation space of the complex structure of the corresponding Hopf manifold.

Using the Poincaré-Dulac theorem, McKay and Pokrovskiy (\cite{MP2010}) proved that any Hopf surface carries locally homogeneous structures. That is, the complex structure of a Hopf surface is always induced by an atlas of local charts with values in a suitable complex homogeneous space $G/H$. They classify all possible structures with model $\mathcal{O}(n)$ (the line bundle of degree $n$ over the Riemann sphere $\mathbb P^1(\mathbb C)$) on any Hopf surface, depending on the normal form of the contraction. They showed that every Hopf surface carries some flat Cartan geometries. These geometries generalize torsion-free flat affine connections that naturally exist on linear Hopf surfaces.

On the other hand, some authors did work on Hopf manifolds without using the Poincaré-Dulac Theorem, i.e. without assuming that the contraction defining the Hopf manifold is under a polynomial normal form. In this context, Mall (\cite{Ma96}) defined the notion of Mall bundles on Hopf manifolds. A Mall bundle on a Hopf manifold is a holomorphic vector bundle with a trivial pullback on the universal covering space $\mathbb C^n\setminus\{0\}$. Mall was able to give a full description of the cohomology of such a Mall bundle, using the fact that the linear operator given by the contraction on space of sections is compact. We will also use this property in Section \ref{geo str ord 1}, more precisely in the proof of Theorem \ref{th 1}. 

Recently, keeping this idea of considering contractions under arbitrary forms, and using the link between the existence of an affine connection and cohomology of some tensor vector bundles, which are Mall bundles, Ornea and Verbitsky (\cite{OV2024}) proved that a Hopf manifold given by a non-resonant contraction (Definition \ref{def reson}) carries a unique affine connection, which is \textit{a posteriori} torsion-free and flat. The contraction then preserves an affine atlas on $\mathbb C^n$, and an affine local system of coordinates around $0$ linearises the contraction. This is a way, using the geometry of Hopf manifolds, to prove the non-resonant case of the Poincaré-Dulac Theorem, also known as the Poincaré linearization Theorem (\cite{A88}, p.193). In \cite{Mc2016} (p.21, lemma 14.2), McKay proved also that a Hopf manifold admitting a holomorphic affine connection (not necessarily torsion-free, nor flat) is isomorphic to a linear Hopf manifold, an isomorphism being given by the exponential chart at $0$ of the pulled back affine connection extended to $\mathbb C^n$.

During the development of this work, we had the opportunity to discuss with Paul Boureau, who also presents in \cite{Bou2025} new results on holomorphic geometric structures on Hopf manifolds, using a different approach. Indeed, he offers a re-examination of Poincaré-Dulac normal forms and the study of the group of sub-resonant polynomials, and uses these normal forms to exhibit locally homogeneous geometric structures on Hopf manifolds of any dimension. This result generalizes those of McKay and Pokrovskiy, who dealt in \cite{MP2010} with the case of Hopf surfaces.

\subsection{Description of the results}\label{Description}

We propose a generalization of the paper of Ornea and Verbitsky \cite{OV2024}. Given a contraction, we consider a group $G_\beta^r$ of sub-resonant polynomial transformations of maximal degree $r$ of $\mathbb C^n$ (Section \ref{group}). This group depends only on the eigenvalues of the differential at $0$ of the contraction. The parameter $\beta$ stands for the collection of eigenvalues and their multiplicity, and the parameter $r$ is the maximal length of the resonances among the eigenvalues (Section \ref{Resonances}).

We construct a $G_\beta^1$-structure of order $1$ on the Hopf manifold, where the group $G^1_\beta$ is given by the jets of order $1$ at $0$ of the elements of the group $G_\beta^r$ (Section \ref{G structures} for the definition of $G$-structure, Section \ref{geo str ord 1} Theorem \ref{th 1} for the construction). More specifically, we prove that the contraction preserves an entanglement of vector sub-bundles of the holomorphic tangent bundle $T\mathbb C^n$. As a consequence, the Hopf manifold carries various sub-bundles of its holomorphic tangent bundle. For example, carrying a $G^1_\beta$-structure implies that there is a complete flag of vector sub-bundle of the holomorphic tangent bundle. This is the first step in the construction of $G$-structures of higher order on Hopf manifolds.

Given such a structure of order $1$, using cohomology arguments on Mall bundles, we prove that the structure extends uniquely to a $G^2_\beta$-structure of order $2$, where the group $G^2_\beta$ is given by the jets of order $2$ of elements of $G_\beta^r$ (Section \ref{geo str ord sup}, Theorem \ref{recurrence pour les structures} and Section \ref{geo str on surf} for surfaces). 

This is a generalization of the cohomological technique used by Ornea and Verbitsky in \cite{OV2024}. Notice that the extension of our $G^1_\beta$-structure of order $1$ is a generalization of an affine connection, but in the latter case, there is no need to reduce the group of the order $1$ frame bundle. A torsion-free affine connection is indeed a $GL(n,\mathbb C)$-structure of order $2$ (\cite{Du11}, p.68). In the non-resonant case, the holomorphic tangent bundle $TM$ of the Hopf manifold admits a holomorphic affine connection because the space $H^1(M,TM^*\otimes TM^* \otimes TM)$ is trivial. A torsion-free affine connection is an equivariant way of extending all the jets of order $1$ of charts to jets of order $2$. In the resonant case, we have to consider cohomology in some vector bundles taking into account the resonances and we show that the first cohomology space of this vector bundle is trivial. With these considerations, we show that we can extend some jets of order $1$ of charts, namely those given by a $G^1_\beta$-structure of order $1$, into equivalent classes of jets of order $2$. The latter equivalent classes are precisely given by what we call the sub-resonant monomials (Section \ref{geo str ord sup}). 

By induction, we can prove that every Hopf manifold admits special $G$-structures of every order (Section \ref{geo str ord sup}, Theorem \ref{recurrence pour les structures}), except for surfaces (Section \ref{geo str on surf}), where we prove that, \textit{a priori}, the structure extends up to order $r+2$. 

Since the group $G_\beta^r$ is a group of global polynomial transformations of degree at most $r$, the extension from the $G^r_\beta$-structure of order $r$ to a structure of order $r+1$ is realized by an equivariant way of extending jets of order $r$ of the holomorphic charts given by the structure into jets of order $r+1$ (see Section \ref{cartan geo}). Using this equivariant section, we can pull back the canonical $1$-form of the frame bundle of order $r+1$ (see \cite{Ko61} and Section \ref{subsection canonical form}) on the structure of order $r$, and we prove in Theorem \ref{on a une geometrie de Cartan}, that it leads to a holomorphic Cartan geometry (see \cite{Sh2000} and Section \ref{Sect Cart Geo} for the definitions), with model defined in Section \ref{new lie group}. The strategy of the proof is the following. The pullback of the canonical form induces a horizontal distribution of the frame bundle of order $r$ defined over the reduction (see Section \ref{section geom premi} for the definition of the frame bundle of higher order). The defect of this distribution to be actually tangent to the reduction is measured by a holomorphic section of a Mall bundle. We prove that such a section, being invariant by the contraction and containing the data of the resonances, has to vanish.

The fact that the previous horizontal distribution is tangent to the reduction is a strong tool that will help us to prove that the $G^r_\beta$-structure of order $r+1$ turns out to be integrable (see Definition \ref{Integrability}). We will use a Frobenius-type theorem, given by Benoist in his paper bringing his point of view on the demonstration of Gromov's open-dense orbit Theorem (\cite{Be97}, Theorem p.12, Section \ref{G structures}). The existence of an integrability atlas is equivalent to finding solutions of a partial derivatives relation. Here, the completeness (Definition \ref{def comp et cons}) of this relation is given by the equivariant section mentioned above. Roughly speaking, it corresponds to the rigidity in the sense of Gromov geometric structures (see Definition \ref{def rigidity}). The consistence is obtained by considering the extension of the structure of order $r$ up to order $r+2$ which, by the same argument as before, produces a Cartan geometry on the reduction of order $r+1$.

We notice that, in the case of Hopf surfaces, the flat Cartan geometries we construct are part of McKay and Pokrovskiy's classification of $\mathcal O(n)$-structures on Hopf surfaces in \cite{MP2010}. What is new here is that we construct those structures without using the normal form of the contraction.

The existence of this integrable structure, and \textit{a fortiori}, a flat Cartan geometry (Theorem \ref{flatness of the cartan geometry}), is equivalent to the existence of special charts adapted to the geometric structure, which provide new coordinates where the contraction has a polynomial normal form. Therefore, we infer a proof of the Poincaré-Dulac Theorem in this context of global contractions, generalizing the proof given by Ornea and Verbitsky in \cite{OV2024} for the non-resonant case (see Theorem \ref{th PoincareDulac}). 

For a Hopf manifold given by a contraction in normal form, the geometric structures we consider arise naturally, as in the case of linear Hopf manifolds. What we prove in this paper is the converse. A contraction has to preserve some integrable $G$-structures, or equivalently some flat Cartan geometries, and special charts for those structures provide the change of coordinates putting the contraction under a normal form.

The main steps of the paper are the following. In Section \ref{geo str ord 1}, we prove the existence of structures of order $1$. In Section \ref{geo str ord sup}, the induction step in the construction of geometric structures of higher order is explained. In Section \ref{Poincare Dulac} we explain how a Cartan geometry is induced by the $G$-structures we consider (Section \ref{cartan geo}), we prove the integrability of the structure of order $r+1$ and the flatness of the Cartan geometry (Section \ref{integrability}) and we deduce a proof of the Poincaré-Dulac Theorem (Section \ref{subsect Poincare Dulac}).

\section{Geometric preliminaries}\label{section geom premi}

In this section, $M$ is a complex manifold of dimension $n$ and $r$ is a positive integer. 

We denote by $\mathcal{D}^r(\mathbb C^n)$ the set of equivalence classes of germs of biholomorphic maps $(\mathbb C^n,0)\to (\mathbb C^n,0)$ under the relation ``having the same Taylor polynomial of order $r$ at $0$". Endowed with the truncated composition, $\mathcal{D}^r(\mathbb C^n)$ is a complex Lie group (\cite{Be97}). We will denote by $\mathfrak d^r(\mathbb C^n)$ its Lie algebra. By convention, $\mathfrak d^0(\mathbb C^n)$ is the trivial Lie algebra. As an example, the group $\mathcal{D}^1(\mathbb C^n)$ is isomorphic to the general linear group $GL(n,\mathbb C)$. For $r\geq s \geq 0$, we denote by $\rho_{r,s}:\mathcal D^r(\mathbb C^n)\to \mathcal{D}^s(\mathbb C^n)$ the standard projection, with the convention $\mathcal D^0(\mathbb C^n)=\{e\}$. This is a complex Lie groups homomorphism.

On the complex manifold $M$, we consider the set of equivalence classes of germs of biholomorphic maps $(\mathbb C^n,0)\to (M,m)$ under the relation ``having the same Taylor polynomial of order $r$ at $0$ in any fixed holomorphic chart in a neighborhood of $m\in M$". The collection of all those sets patches to form a principal $\mathcal{D}^r(\mathbb C^n)$ bundle called the order $r$ frame bundle over $M$, with total space denoted by $\mathcal R^r(M)$(\cite{Be97}). Notice that $\mathcal R^1(M)$ is isomorphic to the bundle of linear frames of the holomorphic tangent bundle $TM$ of $M$. For $r\geq s \geq 0$, we denote by $\pi_{r,s}:\mathcal R^r(M)\to \mathcal{R}^s(M)$ the standard projection, with the convention $\mathcal R^0(M)=M$. This is a homomorphism of principal bundles, with underlying Lie group homomorphism $\rho_{r,s}$.

\subsection{Canonical forms on frame bundles}\label{subsection canonical form}

We recall here the definition and the main properties of the canonical forms on the frame bundles over a complex manifold $M$.

First, we notice that the translations of $\mathbb C^n$ give rise to a global section of all the frame bundles of $\mathbb C^n$, and hence global trivializations
$$
\begin{matrix}
    \xi_r & : & \mathcal R^r(\mathbb C^n) & \longrightarrow & \mathbb  C^n\times \mathcal{D}^r(\mathbb C^n) \\
    && j^r_0f & \longmapsto & (f(0),j^r_0(\theta_{-f(0)}f))
\end{matrix}
$$
where $\theta_z$ is the translation of vector $z\in\mathbb C^n$.

We also notice that if $U$ and $V$ are complex manifolds, and if $f:U\to V$ is a holomorphic map, then it induces a holomorphic map 
$$
\begin{matrix}
    j^r f & : & \mathcal R^r(U) &\longrightarrow & \mathcal R^r(V) \\
    && j^r_0g &\longmapsto & j^r_0(fg).
\end{matrix}
$$

We define (\cite{Mo2006}, p.14), for $r\geq 0$, the following holomorphic $1$-form on $\mathcal R^{r+1}(M)$, called the canonical form of the $r+1$ order frame bundle 
$$
\begin{matrix}
    \chi_{r+1} & : & T\mathcal R^{r+1}(M) & \longrightarrow & T_0\mathbb C^n\times \mathfrak d^{r}(\mathbb C^n) \\
    & & (j^{r+1}_0f,X) & \longmapsto & \mathrm d_{j^r_0id}\xi_r((\mathrm{d}_{j^r_0id}j^rf)^{-1}(d_{j^{r+1}_0f}\pi_{r+1,r}(X))).
\end{matrix}
$$

This canonical form illustrates the following fact. For $r\geq 0$, an element $j^{r+1}_0f$ of $\mathcal R^{r+1}(M)$ can be seen as a horizontal plane for the bundle $\pi_{r,0}:\mathcal{R}^r(M)\to M$ at $j^r_0f$. Together with the standard trivialization of the vector bundle of vertical tangent vectors of $\mathcal{R}^r(M)$ ($\pi_{r,0}$ is a principal bundle) and the fact that $j^1_0f$ gives a linear frame of $T_{f(0)}M$, $j^{r+1}_0f$ is naturally seen as a frame of order $1$ of $\mathcal{R}^r(M)$ at $j^r_0f$. The canonical form at a vector $X$ is obtained by reading the coordinates of the projection of $X$ in this frame. Notice that for $r=0$ the form on $\mathcal R^1(M)$ is given by reading the coordinates of a tangent vector of $M$ in the given frame.

We will now give some properties of the canonical form. For $r\geq 0$, we consider the following group representation
$$
\begin{matrix}
    \underline{Ad}^{r+1} & : &  \mathcal{D}^{r+1}(\mathbb C^n) & \longrightarrow & GL(T_{j^r_0id}\mathcal R^r(\mathbb C^n)) \\
    && j^{r+1}_0\phi & \longmapsto & \underline{Ad}^{r+1}_{j^{r+1}_0\phi}:=\mathrm{d}_{j^r_0id}(j^r_0f\mapsto j^r_0(\phi f\phi^{-1})).
\end{matrix}
$$
Using $\xi_r$, we can consider that $\underline{Ad}^{r+1}$ takes values in $GL(T_0\mathbb C^n\times \mathfrak d^{r}(\mathbb C^n))$. Using this representation, we have the following proposition.

\begin{Proposition}\label{properties cano form}
    (\cite{Mo2006}, p.14) For $r\geq 0$, the canonical form $\chi_{r+1}$ satisfies the following properties:
    \begin{enumerate}
        \item $\forall v\in \mathfrak d^{r+1}(\mathbb C^n)$, $\chi_{r+1}(X_v)\equiv (0,\mathrm d_{e}\rho_{r+1,r} (v))$, where $X_v$ is the fundamental vector field of $\mathcal R^{r+1}(M)$ generated by $v$ ;
        \item $\forall j^{r+1}_0\phi\in\mathcal D^{r+1}(\mathbb C^n)$, $\mathcal{R}_{j^{r+1}_0\phi}^*\chi_{r+1}=\underline{Ad}^{r+1}_{j^{r+1}_0\phi^{-1}}\chi_{r+1}$.
    \end{enumerate}
\end{Proposition}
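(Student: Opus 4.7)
The plan is to unfold the definition of $\chi_{r+1}$ and exploit naturality of the three building blocks, namely $\xi_r$, the induced maps $j^r f$, and the right action. Both parts reduce, once $(dj^r f)^{-1}$ is applied, to explicit computations at the identity frame $j^r_0\mathrm{id}\in\mathcal{R}^r(\mathbb{C}^n)$, where the trivialization $\xi_r$ is completely transparent because every element of $\mathcal{D}^{r+1}(\mathbb{C}^n)$ fixes $0$.

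For (1), I would fix $v\in\mathfrak{d}^{r+1}(\mathbb{C}^n)$ and let $(\phi_t)$ be the one-parameter subgroup of $\mathcal{D}^{r+1}(\mathbb{C}^n)$ tangent to $v$. Writing $X_v(j^{r+1}_0 f)=\frac{d}{dt}\big|_{0}j^{r+1}_0(f\circ\phi_t)$, push forward by $d\pi_{r+1,r}$ to obtain $\frac{d}{dt}\big|_{0}j^r_0(f\circ\phi_t)=d_{j^r_0\mathrm{id}}(j^r f)\bigl(\frac{d}{dt}\big|_{0}j^r_0\phi_t\bigr)$, so that $(dj^r f)^{-1}$ simply returns the fundamental vector at $j^r_0\mathrm{id}$ generated by $d_e\rho_{r+1,r}(v)$. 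Since each $\phi_t$ fixes $0$, one has $\xi_r(j^r_0\phi_t)=(0,j^r_0\phi_t)$; differentiating at $t=0$ yields $(0,d_e\rho_{r+1,r}(v))$, which is the desired formula.

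For (2), set $g=j^{r+1}_0\phi$ and $g_r=\rho_{r+1,r}(g)$, and evaluate $\chi_{r+1}$ at $\mathcal{R}_g(j^{r+1}_0 f)=j^{r+1}_0(f\phi)$ applied to $d\mathcal{R}_g(X)$. The identity $\pi_{r+1,r}\circ\mathcal{R}_g=\mathcal{R}_{g_r}\circ\pi_{r+1,r}$ moves $d\pi_{r+1,r}$ past $d\mathcal{R}_g$, while the factorization $j^r(f\phi)=j^r f\circ j^r\phi$ splits the inverse as $(dj^r(f\phi))^{-1}_{j^r_0\mathrm{id}}=(dj^r\phi)^{-1}_{j^r_0\mathrm{id}}\circ(dj^r f)^{-1}_{j^r_0\phi}$. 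The equivariance $\mathcal{R}_{g_r}\circ j^r f=j^r f\circ\mathcal{R}_{g_r}$ then rewrites $(dj^r f)^{-1}_{j^r_0\phi}\circ d\mathcal{R}_{g_r}$ as $d\mathcal{R}_{g_r}\circ(dj^r f)^{-1}_{j^r_0\mathrm{id}}$, and the vector $(dj^r f)^{-1}_{j^r_0\mathrm{id}}(d\pi_{r+1,r}(X))$ is exactly the preimage under $d\xi_r$ of $\chi_{r+1}(X)$. What remains is the action of $d\xi_r\circ(dj^r\phi)^{-1}_{j^r_0\mathrm{id}}\circ d\mathcal{R}_{g_r}$ on that preimage. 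But $(j^r\phi)^{-1}\circ\mathcal{R}_{g_r}$ is the conjugation map $j^r_0 h\mapsto j^r_0(\phi^{-1}h\phi)$, which fixes $j^r_0\mathrm{id}$, and its differential read through $\xi_r$ is by construction exactly $\underline{Ad}^{r+1}_{g^{-1}}$.

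The main difficulty is bookkeeping: the differentials $(dj^r f)^{-1}$, $d\mathcal{R}_{g_r}$, and $dj^r\phi$ naturally live at different basepoints in $\mathcal{R}^r(\mathbb{C}^n)$ or $\mathcal{R}^r(M)$, and each equivariance identity must be invoked at the correct basepoint before the compositions may be rearranged. Beyond that, the proof is pure naturality combined with the defining formulas for $\xi_r$ and $\underline{Ad}^{r+1}$.
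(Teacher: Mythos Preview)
Your argument is correct. The paper does not actually supply a proof of this proposition: it is stated with a citation to \cite{Mo2006} and used as a known fact, so there is no proof in the paper to compare against. Your unfolding of the definition is the standard verification, and the bookkeeping you flag as the main difficulty is handled properly; in particular, the identification of $(j^r\phi)^{-1}\circ\mathcal{R}_{g_r}$ with the conjugation $j^r_0 h\mapsto j^r_0(\phi^{-1}h\phi)$ is exactly what makes the representation $\underline{Ad}^{r+1}$ appear.
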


\subsection{Holomorphic G-structures on manifolds}\label{G structures}

Let $G$ be a complex Lie subgroup of $\mathcal D^r(\mathbb C^n)$. We recall the definition of a $G$-structure of order $r$ on the manifold $M$.

\begin{Definition}\label{Def G str}
    (\cite{Vi91}, p.152 Example 5) A $G$-structure of order $r$ on $M$ is a principal $G$ sub-bundle of the order $r$ frame bundle of $M$. Equivalently, it is a global section of the quotient bundle $\mathcal R^r(M)/G$.
\end{Definition}

Roughly speaking, a $G$-structure of order $r$ on a manifold is a system of local sections of the $r$ order frame bundle ($r$-frames) such that, on the overlaps, the changes of frames are everywhere elements of $G$.

\begin{Example}
    (\cite{St64}, p.315) On $M=\mathbb C^n$, the standard global coordinates define a trivialization of the frame bundle of any order over $\mathbb C^n$. For every $r$, this trivialization is a $\{e\}$-structure of order $r$, where $\{e\}$ is the trivial group. Moreover, for every Lie subgroup $G$ of $\mathcal D^r(\mathbb C^n)$, this $\{e\}$-structure induces a $G$-structure of order $r$ on $\mathbb C^n$. We call this $G$-structure the standard $G$-structure of $\mathbb C^n$.
\end{Example}

\begin{Definition}\label{Integrability}
    (\cite{Vi91}, p.187) A $G$-structure of order $r$ over M is said to be integrable if it is locally isomorphic to the standard $G$-structure of $\mathbb C^n$, i.e. if there exists a holomorphic atlas on $M$ such that on every open set of this atlas, the chart sends the standard local section of $\mathcal{R}^r(\mathbb C^n)\to\mathbb C^n$ to a local section of the $G$-structure over $M$.
\end{Definition}

\begin{Example}\label{example G struc}
    \begin{enumerate}
        \item (\cite{Ko95}, p.13 Example 2.11) An absolute parallelism on $M$ is a trivialization of the tangent bundle of $M$. Equivalently, this is a global section of the order $1$ frame bundle $\mathcal R^1(M)$. Therefore, it is a $\{e\}$-structure of order $1$. The parallelism is given by $n$ holomorphic vector fields, pointwise linearly independents. The structure is therefore integrable if and only if the vector fields commute two by two.
        \item (\cite{Vi91}, p.185 Example 2) A distribution of dimension $d$ in the tangent bundle is a $G$-structure of order $1$ with the group $G$ defined as the subgroup of $GL(n,\mathbb C)$ composed of the elements preserving a $d$-dimensional vector subspace of $\mathbb C^n$. By the classical Frobenius Theorem, a distribution is integrable (and produces a foliation) if and only if it is involutive under the Lie bracket.
        \item (\cite{Du11}, p.68) The group $GL(n,\mathbb C)$ is naturally a Lie subgroup of $\mathcal D^2(\mathbb C^n)$. A $Gl(n,\mathbb C)$-structure of order $2$ on $M$ is a torsion-free holomorphic affine connection. The structure is integrable if and only if the curvature tensor vanishes. 
    \end{enumerate}
\end{Example}

Another notion is the notion of rigidity in the context of Gromov geometric structures. We recall here the definitions (\cite{Gr88}, §0, \cite{Be97}, Section 1.2 p.4).

Let $Z$ be a complex manifold endowed with a holomorphic action of $\mathcal{D}^r(\mathbb C^n)$.

\begin{Definition}\label{Gromov geo str}
    (\cite{Gr88} §0, \cite{Be97} p.4) A Gromov geometric structure of type $Z$ and of order $r$ on $M$ is a holomorphic $\mathcal{D}^r(\mathbb C^n)$-equivariant map $g:\mathcal{R}^r(M)\to Z$.
\end{Definition}

\begin{Remark}\label{rmk algebraic}
    In \cite{Be97}, the manifold $Z$ and its action of $\mathcal D^r(\mathbb C^n)$ are supposed to be algebraic. This assumption is fundamental for the open-dense orbit Theorem, but we will not use it here. Nevertheless, we notice that every Gromov geometric structure we consider in this paper is actually of algebraic type, because they are $G$-structures with $G$ an algebraic subgroup of $\mathcal{D}^r(\mathbb C^n)$.
\end{Remark}

For $G$ a subgroup of $\mathcal D^r(\mathbb C^n)$, a $G$-structure of order $r$ is a Gromov geometric structure of type $\mathcal{D}^r(\mathbb C^n)/G$. Indeed, if $g:\mathcal{R}^r(M)\to \mathcal{D}^r(\mathbb C^n)/G$ is a $\mathcal{D}^r(\mathbb C^n)$-equivariant map, then the subset $g^{-1}(\{id\})$ of $\mathcal{R}^r(M)$ is naturally endowed with a structure of holomorphic $G$-principal sub-bundle of $\mathcal{R}^r(M)$. 

\begin{Definition}\label{def rigidity}
    (\cite{Be97}, p.7) A Gromov geometric structure of type $Z$ and of order $r$ on $M$ is said to be rigid at order $s\geq r-1$ if for every $x\in M$ and every $t> s$, a $t$-jet of biholomorphism of $(M,x)$ that preserves the structure is completely determined by its $s$-jet.
\end{Definition}

\begin{Example}
    \begin{enumerate}
        \item (\cite{Ko95}, p.15) A jet of a germ of biholomorphism fixing a point of $M$ and preserving an absolute parallelism is in fact the jet of the identity automorphism of $M$. Therefore, an absolute parallelism is rigid at order $0$.
        \item A distribution in the tangent bundle is not rigid. As an example, we consider a germ of biholomorphism of $\mathbb C^2$ fixing $0$ and preserving the foliation generated by the vector field $\frac{\partial}{\partial z_1}$ in the standard coordinates of $\mathbb C^n$. Then, in standard coordinates, the germ of biholomorphism reads
        $$
        (z_1,z_2)\mapsto (f(z_1,z_2),g(z_2))
        $$ Conversely, any germ of this form preserves the foliation, showing that the latter is not rigid. 
        \item (\cite{Be97}, p.7) A torsion-free holomorphic affine connection is rigid at order $1$, since a jet of biholomorphism preserving a point is conjugated by the exponential chart with its differential at the fixed point.
    \end{enumerate}
\end{Example}

In his paper bringing his point of view on the demonstration of Gromov's open-dense orbit Theorem (\cite{Be97}, p.1), Benoist proved a Frobenius type theorem for existence and uniqueness of solutions of some partial derivatives relations. 

If $M$ and $N$ are complex manifolds, we denote by $J^r(M,N)$ the bundle of $r$-jets of holomorphic maps from $M$ to $N$. A partial derivatives relation is a complex submanifold $R$ of $J^r(M,N)$ and a solution is a local holomorphic map $f:M\to N$ such that the section $j^rf:M\to J^r(M,N)$ induced by $f$ takes values in $R$. 

\begin{Definition}\label{def comp et cons}
    (\cite{Be97}, p.12) $(1)$ A partial derivatives relation $R\subset J^r(M,N)$ is said to be complete if the natural projection $J^r(M,N)\to J^{r-1}(M,N)$ induces an isomorphism between $R$ and its image. \\
    $(2)$ A partial derivatives relation $R\subset J^r(M,N)$ is said to be consistent if, for every $p\in R$, there exists a section $M\to R$ through $p$ and tangent at $p$ to a holonomic section, i.e. a section of the form $j^rf$, for $f:M\to N$.
\end{Definition}

Using these two notions, Benoist proved the following.

\begin{Theorem}\label{Th Frobenius}
    (\cite{Be97}, p.12) Let $R$ be a complete and consistent partial derivatives relation. Then for every $p\in R$, there exists a unique germ of solution through $p$.
\end{Theorem}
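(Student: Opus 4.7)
The plan is to convert the relation $R$ into an involutive holomorphic distribution on $R' := \pi(R) \subset J^{r-1}(M,N)$ and apply the holomorphic Frobenius theorem. Completeness gives a biholomorphism $\sigma : R' \to R$ inverse to $\pi|_R$, so that $\sigma(q)$ prescribes, for each $q \in R'$, the $r$-jet sitting above the $(r-1)$-jet $q$. Any such $r$-jet at a base point $x_0$ distinguishes a canonical horizontal subspace $H_q \subset T_q J^{r-1}(M,N)$ of dimension $\dim M$, defined by $H_q := (j^{r-1}f)_{*}(T_{x_0}M)$ for any germ $f$ with $j^r f(x_0) = \sigma(q)$; this subspace depends only on $\sigma(q)$ and projects isomorphically to $T_{x_0}M$ via the source projection $J^{r-1}(M,N) \to M$.

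The first step is to check that $H_q \subset T_q R'$, so that $H$ is genuinely a holomorphic distribution on $R'$. This is precisely what consistency at $p = \sigma(q)$ furnishes: a section $s : M \to R$ through $p$ tangent to some holonomic $j^r f_0$ at $p$ projects to a section $\pi \circ s : M \to R'$ through $q$ whose derivative at $x_0$ equals $(j^{r-1} f_0)_{*}|_{x_0} = H_q$. One obtains in this way a rank-$(\dim M)$ holomorphic distribution on $R'$, complementary to the fibres of the source projection $R' \to M$.

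The hardest part is to show that $H$ is involutive. The guiding idea is that consistency at every point of $R$ is exactly the commutation of mixed partials for the prescription $\sigma$, namely $[H, H] \subset H$. Concretely, at a point $q \in R'$ the holonomic section $j^{r-1} f_0$ produced by consistency is tangent to $H$ at $q$ to first order, and the horizontal lifts of commuting vector fields along any holonomic section Lie-commute by Schwarz's lemma. Turning this heuristic into a proof requires a direct coordinate computation, expressing $\sigma$ through the holomorphic functions giving the top-order partial derivatives and tracking multi-indices carefully; this is where the bookkeeping is most delicate.

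Once $H$ is known to be involutive, the holomorphic Frobenius theorem produces, through each $q \in R'$, a unique holomorphic integral leaf. For a given $p \in R$, the leaf through $q := \pi(p)$ projects locally biholomorphically to $M$ via the source projection and identifies with the graph of a holomorphic section $\tilde f : U \to R'$ with $\tilde f(x_0) = q$. Tangency of $\tilde f$ to $H$ at every point forces $\tilde f$ to be holonomic: setting $f$ to be the composition of $\tilde f$ with the target projection $J^{r-1}(M,N) \to N$, the horizontal-lift condition unwinds by induction on $|\alpha|$ into the relations $\partial^\alpha f = (\tilde f)_\alpha$ for $|\alpha| \leq r-1$, giving $\tilde f = j^{r-1} f$. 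It then follows that $j^r f = \sigma \circ j^{r-1} f$ takes values in $R$ with $j^r f(x_0) = p$, and uniqueness of the germ $f$ is inherited from uniqueness of the leaf.
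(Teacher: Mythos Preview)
The paper does not give its own proof of this theorem: it is quoted from Benoist \cite{Be97} and invoked as a black box in the proof of Theorem~\ref{th integrability}. There is therefore nothing in the paper to compare your argument against.

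That said, your strategy is the standard one and is essentially Benoist's. Completeness produces the section $\sigma:R'\to R$ and hence the rank-$\dim M$ horizontal distribution $H$ on $R'$; consistency forces $H\subset TR'$; Frobenius gives leaves, which one then checks are graphs of holonomic sections. The only place where your sketch is not yet a proof is the involutivity of $H$, which you correctly flag but do not carry out. Your heuristic (``horizontal lifts along a holonomic section Lie-commute'') does not apply as stated, because the section $\pi\circ s$ supplied by consistency is tangent to $H$ only at the single point $q$, not on a neighbourhood. What makes the computation close is rather the following. In jet coordinates the bracket $[D_i,D_j]$ is vertical, and its only possibly nonzero components are the top-order ones $D_i(\sigma_{\alpha+e_j})-D_j(\sigma_{\alpha+e_i})$ for $|\alpha|=r-1$; all lower-order components cancel automatically by the symmetry of multi-indices. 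Now consistency at $p=\sigma(q)$ gives an $f_0$ with $j^rf_0(x_0)=p$ and a section $s:M\to R$ with $ds(x_0)=d(j^rf_0)(x_0)$. Since $s$ lands in $R$ one has $s=\sigma\circ(\pi\circ s)$, and since $d(\pi\circ s)(\partial_{x_i})|_{x_0}=D_i|_q$, the chain rule yields $D_i(\sigma_\beta)(q)=\partial^{\beta+e_i}f_0(x_0)$ for every $|\beta|=r$. Schwarz's lemma for $f_0$ then gives $D_i(\sigma_{\alpha+e_j})(q)=D_j(\sigma_{\alpha+e_i})(q)$, which is exactly $[D_i,D_j]|_q=0$. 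So the ``delicate bookkeeping'' you allude to is short once organised this way, and with it your argument is complete.
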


\subsection{Holomorphic Cartan geometries}\label{Sect Cart Geo}

We will recall the concept of holomorphic Cartan geometry on a complex manifold $M$ of dimension $n$. The main references here are \cite{Sh2000} and \cite{BisDum2020}.

Let $G$ be a complex Lie group with Lie algebra $\mathfrak g$, and let $H$ be a complex Lie subgroup with Lie algebra $\mathfrak h$. 

\begin{Definition}\label{def cartan geo}
    (\cite{Sh2000}, p.184 Definition 3.1) A holomorphic Cartan geometry of type $(G,H)$ on $M$ is a holomorphic $H$-principal bundle $\pi:P\to M$ endowed with a holomorphic one form $\eta:TP\to \mathfrak g$ satisfying the following:
    \begin{enumerate}
        \item $\forall p\in P$, $\eta_p:T_pP\to \mathfrak g$ is an isomorphism (i.e. $\eta$ is an absolute parallelism on $P$) 
        \item $\forall v\in\mathfrak h$, $\eta(X_v)\equiv v$, where $X_v$ is the fundamental vector field generated by $v$ on $P$ 
        \item $\forall h\in H$, $\mathcal R_h^*\eta=Ad_{h^{-1}}(\eta)$.
    \end{enumerate}
\end{Definition}

\begin{Example}
    A holomorphic affine connection on $M$ is a holomorphic Cartan geometry of type $(Aff(n,\mathbb C),GL(n,\mathbb C)))$. Indeed, it is an absolute parallelism of the order one frame bundle $\eta:T\mathcal R^1(\mathbb C^n)\to \mathfrak {aff}(n,\mathbb C)$. The principal connection on $\mathcal R^1(M)\to M$ is given by the $Ad_{GL(n,\mathbb C)}$-invariant splitting $\mathfrak {aff}(n,\mathbb C)=T_0\mathbb C^n\oplus\mathfrak{gl}(n,\mathbb C)$. On the other hand, given a principal connection on $\mathcal R^1(M)$, we obtain the Cartan geometry by adding the data of the canonical form (see Section \ref{subsection canonical form}).
\end{Example}

We recall the central notion of curvature and a remark on this tensor field.

\begin{Definition}
    (\cite{Sh2000}, p.184) Let $\eta:TP\to \mathfrak g$ be a holomorphic Cartan geometry on $M$. The holomorphic $2$-form on $P$ defined by $\Omega:=\mathrm d\eta+\frac{1}{2}[\eta,\eta]$ is called the curvature form. The Cartan geometry $\eta$ is said to be flat if the form $\Omega$ is identically zero.  
\end{Definition}

\begin{Proposition}
    (\cite{Go2014}, p.4) Let $\eta:TP\to \mathfrak g$ be a holomorphic Cartan geometry on $M$. The curvature form $\Omega$ can be seen as a holomorphic $2$-form on $M$ with value in the $\mathfrak g$-adjoint bundle $Ad_{\mathfrak g}(P)$ of $P$ defined as the associated bundle 
    $$
    Ad_{\mathfrak g}(P):=P\times_H\mathfrak g
    $$
\end{Proposition}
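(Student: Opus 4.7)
The plan is to show that $\Omega$, which is \emph{a priori} only a $\mathfrak g$-valued holomorphic $2$-form on the total space $P$, is both horizontal with respect to the projection $\pi:P\to M$ and $H$-equivariant for the adjoint representation. By the standard identification between horizontal $Ad$-equivariant $\mathfrak g$-valued forms on a principal $H$-bundle and forms on the base with values in the associated bundle $P\times_H\mathfrak g$, these two properties will yield the desired descent to an $Ad_{\mathfrak g}(P)$-valued holomorphic $2$-form on $M$.

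The first step, horizontality, consists in checking that $\iota_{X_v}\Omega=0$ for every $v\in\mathfrak h$, where $X_v$ denotes the fundamental vector field generated by $v$ on $P$. For an arbitrary vector field $Y$ on $P$, the Cartan formula for the exterior derivative gives $\mathrm{d}\eta(X_v,Y)=X_v(\eta(Y))-Y(\eta(X_v))-\eta([X_v,Y])$, and axiom $(2)$ of Definition \ref{def cartan geo} forces the middle term to vanish since $\eta(X_v)\equiv v$ is constant. To handle the remaining two terms, I would differentiate the equivariance axiom $(3)$ at $h=e$ in the direction $v$, which produces the identity $\mathcal L_{X_v}\eta=-\mathrm{ad}_v\circ\eta$. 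Since $(\mathcal L_{X_v}\eta)(Y)=X_v(\eta(Y))-\eta([X_v,Y])$, this yields $\mathrm{d}\eta(X_v,Y)=-[v,\eta(Y)]$. Combined with $\frac{1}{2}[\eta,\eta](X_v,Y)=[\eta(X_v),\eta(Y)]=[v,\eta(Y)]$, we obtain $\Omega(X_v,Y)=0$.

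The second step, equivariance, is essentially formal. Pullback commutes with exterior differentiation, so $\mathcal R_h^*\mathrm{d}\eta=\mathrm{d}(\mathcal R_h^*\eta)=\mathrm{d}(Ad_{h^{-1}}\eta)=Ad_{h^{-1}}\mathrm{d}\eta$, because $Ad_{h^{-1}}$ is a constant linear endomorphism of $\mathfrak g$. Pullback also commutes with the graded bracket on $\mathfrak g$-valued forms, so $\mathcal R_h^*[\eta,\eta]=[Ad_{h^{-1}}\eta,Ad_{h^{-1}}\eta]=Ad_{h^{-1}}[\eta,\eta]$ since $Ad_{h^{-1}}$ is a Lie algebra automorphism. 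Summing gives $\mathcal R_h^*\Omega=Ad_{h^{-1}}\circ\Omega$.

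Having these two properties, one concludes by the usual correspondence: a $\mathfrak g$-valued $k$-form on $P$ that is horizontal and satisfies $\mathcal R_h^*\omega=Ad_{h^{-1}}\omega$ descends uniquely to a $k$-form on $M$ with values in $P\times_H\mathfrak g$. Applied to $\Omega$, this produces the claimed $Ad_{\mathfrak g}(P)$-valued $2$-form on $M$. The main step is the horizontality verification, as it is the only place where all three axioms of a Cartan geometry intervene together; the equivariance computation and the descent are then formal consequences.
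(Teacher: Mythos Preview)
Your argument is correct and is the standard proof of this fact: one checks horizontality via the Cartan formula and the infinitesimal form of axiom~(3), checks $Ad$-equivariance directly from axiom~(3), and then invokes the usual correspondence between horizontal equivariant forms on $P$ and forms on $M$ with values in the associated bundle. Note that the paper does not give its own proof of this proposition; it is stated with a reference to \cite{Go2014} and used as background, so there is nothing further to compare against.
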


The interest of the curvature is the following Theorem that asserts that a flat Cartan geometry is locally modeled on the standard Cartan geometry.

\begin{Theorem}\label{meaning of flatness for cartan geometries}
    (\cite{Sh2000}, p.212 Theorem 5.1) Let $\eta$ be a holomorphic Cartan geometry of type $(G,H)$ on a bundle $P\to M$. Then $\eta$ is flat if and only if at each $x$ of $M$, there exists a neighborhood of $x$ and a local isomorphism between the Cartan geometry $\eta$ and the bundle $G\to G/H$ equipped with the Maurer-Cartan form of $G$ (\cite{Sh2000}, p.98, definition 1.3).
\end{Theorem}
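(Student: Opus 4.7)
The statement splits into two directions. The easy one is that the model $G\to G/H$ equipped with its Maurer-Cartan form $\omega_G$ is itself a flat Cartan geometry, since the Maurer-Cartan equation $d\omega_G+\tfrac12[\omega_G,\omega_G]=0$ says exactly that its curvature vanishes. Any local isomorphism of Cartan geometries identifies the Cartan connections and hence the curvatures, so if $(P,\eta)$ is locally modeled on $G\to G/H$ then $\Omega\equiv 0$.

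For the converse, I would assume $\Omega\equiv 0$, fix $p_0\in P$, and build a local $H$-equivariant holomorphic map $F:U\subset P\to G$ satisfying $F^*\omega_G=\eta$ with $F(p_0)=e$. Such an $F$ is automatically a local biholomorphism, because $\eta$ and $\omega_G$ are absolute parallelisms (so $dF$ is an isomorphism on each tangent space), and it then descends to the required local isomorphism $\pi(U)\to G/H$ of Cartan geometries. The plan is to realize $F$ as the graph of an integral leaf of a distribution on $P\times G$ via a holomorphic Frobenius argument. Consider the $\mathfrak g$-valued holomorphic $1$-form
$$\theta:=\mathrm{pr}_P^*\,\eta-\mathrm{pr}_G^*\,\omega_G$$
on $P\times G$. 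Because $\omega_G$ alone is fibrewise surjective onto $\mathfrak g$, so is $\theta$, and $\mathcal D:=\ker\theta$ is a holomorphic distribution of rank $\dim P$, transverse to the fibres of $\mathrm{pr}_P$.

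Involutivity follows from combining flatness of $\eta$ with the Maurer-Cartan equation: one computes
$$d\theta=-\tfrac12\,\mathrm{pr}_P^*[\eta,\eta]+\tfrac12\,\mathrm{pr}_G^*[\omega_G,\omega_G],$$
and on $\mathcal D$ the two $1$-form pullbacks coincide by definition, which forces the bracket terms to coincide as well, so $d\theta|_{\mathcal D}=0$. The holomorphic Frobenius theorem then yields a maximal integral leaf $N$ through $(p_0,e)$; transversality makes $\mathrm{pr}_P|_N$ a local biholomorphism near $(p_0,e)$, and $F:=\mathrm{pr}_G\circ(\mathrm{pr}_P|_N)^{-1}$ is the desired solution of $F^*\omega_G=\eta$.

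Finally, I would extract $H$-equivariance of $F$ from two properties of $\mathcal D$. First, the diagonal right $H$-action on $P\times G$, using the inclusion $H\hookrightarrow G$ on the second factor, preserves $\mathcal D$: property (3) of Definition \ref{def cartan geo} and the analogous $\mathrm{Ad}$-transformation rule $R_h^*\omega_G=\mathrm{Ad}_{h^{-1}}\omega_G$ give $R_h^*\theta=\mathrm{Ad}_{h^{-1}}\theta$. Second, property (2) and the identity $\omega_G(X_v^G)=v$ imply that the fundamental vector fields of the diagonal $H$-action lie inside $\mathcal D$, so the diagonal $H$-orbit of $(p_0,e)$ is contained in $N$. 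Together these show $N$ is $H$-stable and contains $(p_0\cdot h,h)$ for $h$ near $e$, which translates to $F(p\cdot h)=F(p)\cdot h$. The induced local map $\bar F:\pi(U)\to G/H$ is then the sought local isomorphism of Cartan geometries. The main obstacle I anticipate is precisely this equivariance bookkeeping, since matching the $H$-action on both sides requires careful use of both structural axioms of $\eta$; the involutivity check and the Frobenius step itself are essentially formal once $\theta$ has been set up.
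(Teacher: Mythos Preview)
The paper does not supply its own proof of this theorem; it is quoted as a classical result from Sharpe (\cite{Sh2000}, p.212, Theorem 5.1) and used as a black box in the proof of Corollary \ref{flatness of the cartan geometry}. There is therefore nothing in the paper to compare your argument against.

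That said, your proposal is correct and is essentially the standard proof (the one Sharpe gives). The reduction to integrating the distribution $\mathcal D=\ker(\mathrm{pr}_P^*\eta-\mathrm{pr}_G^*\omega_G)$ on $P\times G$, the involutivity check via the structure equation, and the extraction of $H$-equivariance from the $\mathrm{Ad}$-equivariance of $\theta$ together with the fact that the diagonal fundamental vector fields lie in $\mathcal D$, are exactly the expected steps. One small point worth tightening: to pass from ``$N$ contains the diagonal $H$-orbit of $(p_0,e)$'' and ``$\mathcal D$ is $H$-invariant'' to full equivariance $F(p\cdot h)=F(p)\cdot h$ on a neighbourhood, you should invoke connectedness of the leaf (or uniqueness of maximal integral manifolds) to conclude that the diagonal $R_h$ maps $N$ into itself, not merely that it maps $\mathcal D$ to $\mathcal D$; you gesture at this but do not state it explicitly.
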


\section{Resonances of a contraction}\label{Resonances}

Let $n\geq2$ be an integer and $\gamma:\mathbb C ^n \to \mathbb C^n$ be a biholomorphic contraction in $0$, i.e. a map such that for any compact $K$ and any neighborhood $U$ of $0$, a sufficiently high power of $\gamma$ maps $K$ in $U$. Denote by $M$ the corresponding Hopf manifold, defined as the orbit space of the fixed point free and properly discontinuous action of the group $\mathbb Z$ on $\mathbb C^n\setminus \{0\}$ generated by $\gamma$, with its compact complex manifold structure given by the quotient map (\cite{Ha85}). We will prove in Section \ref{geo str ord sup} that $M$ carries geometric structures, encoded in the contraction $\gamma$ and, even more, encoded in the Jacobian of $\gamma$ at $0$.

Let $d_0\gamma:T_0\mathbb C^n\to T_0\mathbb C^n$ be the differential map of $\gamma$ at $0$. Haefliger (\cite{Ha85}, Appendix A.2) proved that the eigenvalues of this linear map are of modulus smaller than $1$. Up to a linear change of coordinates of $\mathbb C^n$, we can assume that the Jacobian matrix of $\gamma$ at 0 has, in the standard coordinates, the form 
$$
J_0\gamma=\begin{pmatrix}
\begin{matrix}
\alpha_1 &  & (0) \\
 * & \begin{matrix}
\cdot & \\ 
 & \cdot 
\end{matrix} &  \\
* &  * & \alpha_1
\end{matrix} &  & (0) \\
 & \begin{matrix}
\cdot & &  \\ 
 & \cdot &  \\
& & \cdot
\end{matrix} &  \\
(0) &  & \begin{matrix}
\alpha_m &  & (0) \\
 * & \begin{matrix}
\cdot & \\ 
 & \cdot 
\end{matrix} &  \\
* & * & \alpha_m
\end{matrix}
\end{pmatrix}
$$
where 
$$
1>\vert \alpha_1 \vert \geq ... \geq \vert \alpha_m \vert >0
$$
and the block corresponding to the eigenvalue $\alpha_j$ is of size $n_j$, for $j\in\{1,...,m\}$.

We denote also by $u_0$ the order one Taylor jet of $\gamma$ at $0$. The map $u_0:\mathbb C^n \to\mathbb C^n$ is complex linear, and its matrix in the standard basis of $\mathbb C^n$ is also given by $J_0\gamma$. In all that follows, $\alpha$ will be used to represent the data of the eigenvalues $(\alpha_1,...,\alpha_m)$ and their multiplicity $(n_1,...,n_m)$.

With this notations we will use the following central definition.

\begin{Definition}\label{def reson}
    (\cite{Bo81}, p.1)
    A $\alpha$-resonance, or simply a resonance if there is no ambiguity, is a relation of the form 
    $$
    \alpha_j=\prod\limits_{k=1}^m \alpha_k^{p_k}
    $$
    where $j$ is an element of $\{1,...,m\}$ and $p=(p_1,...,p_m)$ is a nonzero element of $\mathbb N^m$. We will say that $\alpha_i$ is dependent on $\alpha_k$ if there exists a resonance of the form 
    $$
    \alpha_j=\alpha_k\prod\limits_{l=1}^m \alpha_l^{p_l}
    $$ and independent if there exist no resonance of this type.
\end{Definition}

\begin{Remark}
    Some authors do not consider the trivial relation $\alpha_j=\alpha_j$ as a resonance. In what follows, to be as general as possible, we will need to say that $\alpha_j$ is dependent of $\alpha_j$.
\end{Remark}

\section{Geometric structures of order 1 on Hopf manifolds}\label{geo str ord 1}

We will show that every Hopf manifold carries a $G$-structure of order $1$, with $G$ a linear complex Lie group induced by the set of eigenvalues $\alpha$. Namely, let $G_\alpha^1$ be the complex Lie group of invertible triangular inferior matrices $A=(a_{i,j})_{i,j}$ such that for $i>j$, $a_{i,j}=0$ if $\alpha_{k_i}$ is independent on $\alpha_{k_j}$, where $k_i$ is the element of $\{1,...,m\}$ such that $\alpha_{k_i}$ is the diagonal coefficient of $J_0\gamma$ in the row $i$, and same for $k_j$ (see Section \ref{Resonances}).

\begin{Example}
    Assume that $n=3$ and that $\gamma$ is such that 
    $$
    J_0\gamma = \begin{pmatrix}
        \frac{1}{2} & 0 & 0  \\
        0 & \frac{1}{3} & 0  \\
        0 & 0 & \frac{1}{4}  
    \end{pmatrix}
    $$
    then $G_\alpha^1$ is the group of invertible matrices of the form 
    $$
    \begin{pmatrix}
        * & 0 & 0  \\
        0 & * & 0  \\
        * & 0 & * 
    \end{pmatrix}.
    $$

    If $\gamma$ is such that 
   $$ 
   J_0\gamma = \begin{pmatrix}
        \frac{1}{2} & 0 & 0  \\
        0 & \frac{1}{3} & 0  \\
        0 & 0 & \frac{1}{6}  
    \end{pmatrix}
  $$
   then $G_\alpha^1$ is the group of invertible matrices of the form 
    $$
    \begin{pmatrix}
        * & 0 & 0  \\
        0 & * & 0  \\
        * & * & * 
    \end{pmatrix}.
    $$

    If $\gamma$ is such that 
    $$ 
   J_0\gamma = \begin{pmatrix}
        \frac{1}{2} & 0 & 0 & 0  \\
        1 & \frac{1}{2} & 0 & 0  \\
        0 & 0 & \frac{1}{4}  & 0 \\
        0 & 0 & 0 & \frac{1}{8}
    \end{pmatrix}
  $$
  then $G_\alpha^1$ is the group of invertible matrices of the form 
   $$
    \begin{pmatrix}
        * & 0 & 0 & 0  \\
        * & * & 0 & 0  \\
        * & * & * & 0 \\
        * & * & * & *
    \end{pmatrix}.
    $$
\end{Example}

In this section, we are interested in $G^1_\alpha$-structures of order $1$ (Section \ref{G structures} for the definition of $G$-structures). To understand $G^1_\alpha$-structures of order $1$, we have to describe the group $G^1_\alpha$ as a subgroup of $GL(n,\mathbb C)$. We will realize $G^1_\alpha$ as the subgroup of $GL(n,\mathbb C)$ preserving an entanglement of sub-spaces of $\mathbb C^n$, under the standard action of $GL(n,\mathbb C)$ on $\mathbb C^n$.

For every $j\in\{1,...,m\}$, we define $I_j$ the set of non-trivial dependence indices of $\alpha_j$ i.e.
$$
I_j=\{0\}\cup \{l\in\{1,...,j-1\}~\vert~ \alpha_j \text{ depends on } \alpha_l\}.
$$
Using also the notation $V_{0,0}=\mathbb C^n$, the invertible linear maps that preserve the family of sub-spaces 
$$
V_{j,k}:=\left(\bigcap\limits_{l\in I_j} V_{l,0}\right)\cap \Vect \{e_i, i\in \{1,...,n\}\setminus\{n_1+...+n_{j-1}+1,...,n_1+...+n_{j-1}+n_j-k\}\}
$$
for every $j\in\{1,...,m\}$ and $k\in\{0,...,n_j-1\}$, are exactly the elements of $G^1_\alpha$. The vectors $e_1,...,e_n$ denote the standard basis of $\mathbb C^n$.

\begin{Example}
    Assume that $G_\alpha^1$ is the group of invertible matrices of the form 
    $$
    \begin{pmatrix}
        * & 0 & 0  \\
        0 & * & 0  \\
        * & 0 & * 
    \end{pmatrix},
    $$
    then it is the group of invertible matrices that preserve the spaces 
    $$
    \begin{matrix}
        \Vect \{e_2,e_3\}\\
        \Vect \{e_1,e_3\}\\
        \Vect\{e_2\}.
    \end{matrix}
    $$
    Assume that $G_\alpha^1$ is the group of invertible matrices of the form 
    $$
    \begin{pmatrix}
        * & 0 & 0 & 0  \\
        0 & * & 0 & 0  \\
        0 & * & * & 0  \\
        * & 0 & 0 & *
    \end{pmatrix},
    $$
    then it is the group of invertible matrices that preserve the spaces 
    $$
    \begin{matrix}
        \Vect \{e_2,e_3,e_4\}\\
        \Vect \{e_1,e_3,e_4\}\\
        \Vect\{e_1,e_4\}\\
        \Vect\{e_2,e_3\}.
    \end{matrix}
    $$
\end{Example}

Therefore, a $G_\alpha^1$-structure of order $1$ on a manifold is an entanglement of sub-bundles of the tangent bundle. The following theorem asserts that every Hopf manifold carries such a structure, descending from an invariant structure on $\mathbb C^n \setminus\{0\}$ extending to $\mathbb C^n$.

\begin{Theorem}\label{th 1}
    There exists a $G_\alpha^1$-structure on $\mathbb C^n$ invariant under the action of $<\gamma>$.
\end{Theorem}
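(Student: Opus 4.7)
The $G_\alpha^1$-structure in question is, by the description just given, the stabiliser of the entanglement $\{V_{j,k}\}$, so constructing a $\langle\gamma\rangle$-invariant $G_\alpha^1$-structure on $\mathbb C^n$ amounts to producing a family of holomorphic sub-bundles $\{\mathcal V_{j,k}\}$ of $T\mathbb C^n$ with the same combinatorial pattern of ranks, inclusions, and intersections, with fibre $V_{j,k}$ at $0$, and each preserved by $d\gamma$. Since each $V_{j,k}$ is $u_0$-invariant by its very combinatorial design through the sets $I_j$, the invariance at the fibre over $0$ is automatic. My plan is to build each $\mathcal V_{j,k}$ individually and then check that the inclusion and intersection relations propagate from a uniqueness statement.

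For a single $V=V_{j,k}$, pick a $u_0$-invariant complement $W$ so that $T\mathbb C^n=\underline V\oplus\underline W$ in the canonical trivialisation. Any holomorphic distribution of rank $\dim V$ with fibre $V$ at $0$ is, locally around $0$, the graph of a holomorphic section $\phi:\mathbb C^n\to\Hom(V,W)$ with $\phi(0)=0$; writing the Jacobian of $\gamma$ in the block form $J\gamma=\begin{pmatrix} A & B \\ C & D\end{pmatrix}$ in this decomposition one has $A(0)=u_0|_V$, $D(0)=u_0|_W$ and $B(0)=C(0)=0$, and $d\gamma$ preserves the graph of $\phi$ iff
$$
\phi\circ\gamma\cdot(A+B\phi)\;=\;C+D\phi.
$$
The linearisation at $\phi=0$ is the operator $\mathcal L:\phi\mapsto D_0\,\phi-\phi(u_0\cdot)\,A_0$, which acts on a monomial component $z^p\,e_{i'}\otimes e_i^\ast$ (with $e_i$ a coordinate vector of $V$ and $e_{i'}$ one of $W$) by the Sylvester scalar $\alpha_{k_{i'}}-\alpha^p\alpha_{k_i}$. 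The whole point of the combinatorial definition of $V_{j,k}$ through the dependence sets $I_j$ is that none of these scalars vanishes, a vanishing being exactly a resonance making $\alpha_{k_{i'}}$ depend on $\alpha_{k_i}$ and thus forbidden by the construction of $V_{j,k}$. Hence $\mathcal L$ is invertible order by order and the functional equation admits a unique formal power-series solution.

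To pass from the formal solution to a convergent one, I would invoke the compactness of $\gamma^\ast$ acting on holomorphic sections of the auxiliary trivial bundle $\Hom(\underline V,\underline W)$, in the spirit of Mall \cite{Ma96} and Ornea--Verbitsky \cite{OV2024}: the Mall-type compactness makes $\mathcal L$ Fredholm on the appropriate Fréchet space of sections, and formal injectivity then yields a genuine holomorphic section on all of $\mathbb C^n$; the full non-linear equation is handled by a standard implicit function / Newton argument around this solution of the linearisation. Applying the construction to every $V_{j,k}$ produces the family $\{\mathcal V_{j,k}\}$, and the uniqueness of the $\gamma$-invariant extension of a $u_0$-invariant fibre at $0$ forces $\mathcal V_{j,k}\cap\mathcal V_{j',k'}$ to coincide with the invariant sub-bundle attached to $V_{j,k}\cap V_{j',k'}$, so the whole combinatorial pattern propagates and yields the claimed $G_\alpha^1$-structure. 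The principal obstacle in this plan is the solvability step: one has to set up the functional-analytic framework in which $\gamma^\ast$ is compact on the correct space of sections, and to convert the exclusion of dependences encoded in $I_j$ into the precise non-vanishing of the Sylvester scalars of $\mathcal L$ that guarantees its invertibility.
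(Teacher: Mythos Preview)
Your plan differs substantially from the paper's proof. The paper works dually and inductively: it applies the Riesz--Schauder theorem to the compact operator $\gamma^*$ acting on bounded holomorphic $1$-forms over a neighborhood $D$ with $\gamma(D)\Subset D$, extracts for the eigenvalue $\alpha_1$ the finite-dimensional generalized eigenspace $\ker((\gamma^*-\alpha_1Id)^{N_1})$, shows via a lowest-order Taylor argument that any nonzero element of it is nonvanishing at $0$ (a vanishing would force a resonance $\alpha_1=\alpha_r\prod\alpha_i^{p_i}$ with $|p|\ge 1$, impossible for the largest eigenvalue), proves the whole space has dimension exactly $n_1$ and is pointwise independent, and takes the kernels to produce the flag $H_{1,0}\subset\dots\subset H_{1,n_1-1}$. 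The $(j{+}1)$-st step is then carried out \emph{inside} the already-built bundle $\bigcap_{l\in I_{j+1}}\mathcal H_l$, trivialized by Oka--Grauert, via the same Riesz--Schauder argument on sections of its dual. Because each step lives inside the previous intersection, the inclusion relations are automatic; the correct rank of each new intersection is checked at the end of the step by the limit $\gamma^p(z)\to 0$. This architecture avoids both obstacles you flag: there is no nonlinear equation (the forms come directly from the spectral decomposition of a compact linear operator), and the combinatorics of inclusions and intersections are built into the induction rather than recovered afterwards from uniqueness.

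Your graph-of-$\phi$ approach is a reasonable alternative, but the gaps you identify are genuine and a bit larger than you suggest. First, the Sylvester non-vanishing is true but not for the one-line reason given: one must unwind the recursive definition of $V_{j,k}$ to see that $e_{i'}\notin V_{j,k}$ forces $k_{i'}\le j$ with $\alpha_j$ depending on $\alpha_{k_{i'}}$ along a chain through the sets $I_l$; then transitivity of dependence together with the modulus ordering $|\alpha_1|\ge\cdots\ge|\alpha_m|$ rules out $\alpha_{k_{i'}}=\alpha^p\alpha_{k_i}$ in all three cases $k_i<j$, $k_i=j$, $k_i>j$. Second, Mall compactness is a statement about the linear operator $\gamma^*$ and does not solve your nonlinear fixed-point equation; Fredholmness of the linearisation is not enough, and you need an honest contraction-mapping or majorant-series argument on a Banach space of germs, as in holomorphic invariant-manifold theorems. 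Third, uniqueness does give the inclusions $\mathcal V_{j,k}\subset\mathcal V_{l,0}$ for $l\in I_j$, but a $G^1_\alpha$-reduction also requires the intersections $\bigcap_{l\in I_j}\mathcal V_{l,0}$ to have the expected rank everywhere, not only at $0$; you would still need the contraction-limit argument the paper uses, or else abandon the ``each bundle separately'' strategy and build them inductively inside one another.
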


\begin{proof}
    Since $\gamma$ is a contraction, there exists an open neighborhood $D$ of $0$ in $\mathbb C^n$ such that $\gamma(D)$ is bounded and included in $D$. Then, the space of bounded holomorphic one forms on $D$ (for a given hermitian metric) $H^0_b(D,\Omega^1_D)$ is a Banach space, and the pull-back operator 
    $$
    \gamma^* : H^0_b(D,\Omega^1_D) \longrightarrow H^0_b(D,\Omega^1_D)
    $$ is compact (\cite{Ma96}, Proposition 4.1). The compactness of this operator allows to use the powerful Riesz-Schauder theorem (\cite{F70}, Section 5.2, \cite{OV17} and \cite{OV2024} for some use of the Riesz-Schauder theorem in the context of Hopf manifolds).
    
    We will construct a $G^1_\alpha$-structure by use of an induction argument. We start with the base case.
    
    Looking at the form of the Jacobian matrix of $d_0\gamma$, we see that the $1$-form $dz_{n_1}$ is not in the image of the operator $\gamma^*-\alpha_1Id$, because there can be no linear forms $\omega_0$ on $T_0\mathbb C^n$ such that $\omega_0\circ d_0\gamma-\alpha_1\omega_0=d_0z_{n_1}$.
    Therefore, the operator $\gamma^*-\alpha_1Id$ is not onto, and since $\alpha_1$ is not equal to $0$, we infer, using the Riesz-Schauder theorem, that $\alpha_1$ is an eigenvalue of $\gamma^*$ and that there exists a positive integer $N_1$, that we can assume to be greater or equal than $n_1$, such that the space $\ker((\gamma^*-\alpha_1Id)^{N_1})$ is finite dimensional and such that the following decomposition holds
    $$
    H^0_b(D,\Omega^1_D)=\ker((\gamma^*-\alpha_1Id)^{N_1})\oplus\overline{\im((\gamma^*-\alpha_1Id)^{N_1})}.
    $$

    This allows us to consider first an arbitrary nonzero element $\omega$ of $\ker(\gamma^*-\alpha_1Id)$. We will show that $\omega$ does not vanish at $0$. To do that, we write 
    $$
    \omega=\sum_{j=1}^n\sum_{k\geq 0}P_{j,k}dz_j
    $$
    where $P_{j,k}$ is a homogeneous polynomial map of degree $k$. Since $\omega$ is nonzero, let $k_0$ be the smallest non negative integer such that one of the $P_{j,k_0}$ is nonzero, and let $j_0$ be the largest integer in $\{1,...,n\}$ such that $P_{j_0,k_0}$ is nonzero. Then the equation 
    $$
    \gamma^*\omega=\alpha_1\omega
    $$ is equivalent to 
    $$
    \sum_{j=1}^n\sum_{k\geq k_0}(P_{j,k}\circ\gamma)d\gamma_j=\alpha_1\sum_{j=1}^n\sum_{k\geq k_0}P_{j,k}dz_j
    $$
    where $\gamma=(\gamma_1,...,\gamma_n)$, which is itself equivalent to 
    $$
    \sum_{j=1}^n \left( \sum_{l=1}^n\left( \sum_{k\geq k_0}P_{l,k}\circ\gamma \right) \frac{\partial \gamma_l}{\partial z_j} \right)dz_j = \sum_{j=1}^n\left(\alpha_1 \sum_{k\geq k_0} P_{j,k}\right)dz_j.
    $$
    This last equation is equivalent to the identification of two holomorphic functions, for every $j\in \{1,...n\}$, namely
    $$
    \sum_{l=1}^n\left( \sum_{k\geq k_0}P_{l,k}\circ\gamma \right) \frac{\partial \gamma_l}{\partial z_j} = \alpha_1 \sum_{k\geq k_0} P_{j,k}.
    $$
    This implies, choosing $j=j_0$ and looking at the Taylor polynomial of order $k_0$, the following equation
    $$
    \sum_{l=1}^{j_0}(P_{l,k_0}\circ u_0) \frac{\partial \gamma_l}{\partial z_j}(0) = \alpha_1  P_{j_0,k_0}.
    $$
    But looking at the form of the Jacobian matrix $J_0\gamma$ again, we obtain that 
    $$
    \alpha P_{j_0,k_0}\circ u_0=\alpha_1 P_{j_0,k_0}
    $$
    where $\alpha$ is the eigenvalue of $J_0\gamma$ lying at row and column $j_0$, say $\alpha_r$ with $r\in \{1,...,m\}$.
    This proves that $\frac{\alpha_1}{\alpha_r}$ is an eigenvalue of the linear endomorphism 
    $$
    \begin{matrix}
        \mathbb C_{k_0}^{\Hom}[Z_1,...,Z_n] & \longrightarrow & \mathbb C_{k_0}^{\Hom}[Z_1,...,Z_n] \\
        P & \longmapsto & P \circ u_0
    \end{matrix}
    $$
    where the $\mathbb C_{k_0}^{\Hom}[Z_1,...,Z_n]$ is the space of homogeneous polynomials of degree $k_0$ with $n$ variables.
    But the eigenvalues of this endomorphism are precisely all the possible products of $k_0$ eigenvalues of $J_0\gamma$. Therefore, if $k_0$ is positive, then there is a resonance 
    $$
    \alpha_1=\alpha_r \prod_{j=1}^m \alpha_j^{p_j}
    $$
    where the length of $p=(p_1,...,p_m)$ is $k_0$, therefore positive. But this kind of relation is impossible, since 
    $$
    1>\vert \alpha_1 \vert \geq ... \geq \vert \alpha_m \vert >0.
    $$
    Hence, $k_0=0$  (and $r=1$), which proves that $\omega$ does not vanish at $0$.

    Since the space $\ker((\gamma^*-\alpha_1Id)^{N_1})$ is finite dimensional, say of dimension $r_1$, and stable by $\gamma^*$, let $(\omega_1,...,\omega_{r_1})$ be a basis of $\ker((\gamma^*-\alpha_1Id)^{N_1})$ such that $\gamma^*$ is triangular superior in this basis. We need to prove that the family $(\omega_1,...,\omega_{r_1})$ is pointwise linearly independent. Let us prove it at $0$. Assume that $\lambda_1,...,\lambda_{r_1}$ are complex numbers such that 
    $$
    \sum_{j=1}^{r_1}\lambda_j(\omega_j)_0=0.
    $$
    Then 
    $$
    \omega:=(\gamma^*-\alpha_1Id)^{N_1-1}\left(\sum_{j=1}^{r_1}\lambda_j\omega_j\right)
    $$
    is an element of $\ker(\gamma^*-\alpha_1Id)$.
 On the other hand, we have 
 $$
 \begin{matrix}
     \omega & = & \sum\limits_{j=1}^{r_1}\lambda_j(\gamma^*-\alpha_1Id)^{N_1-1}\omega_j \\
                    & = & \sum\limits_{j=1}^{r_1}\lambda_j \sum\limits_{k=0}^{N_1-1}
                    \begin{pmatrix}
                        N_1-1 \\
                        k
                    \end{pmatrix}
                    (-\alpha_1)^{N_1-1-k}(\gamma^*)^k\omega_j.
\end{matrix}
 $$
 Then 
 $$
 \begin{matrix}
     \omega_0 & = & \sum\limits_{j=1}^{r_1}\lambda_j \sum\limits_{k=0}^{N_1-1}
        \begin{pmatrix}
            N_1-1 \\
            k
        \end{pmatrix}
    (-\alpha_1)^{N_1-1-k}(\omega_j)_0\circ (d_0\gamma^k) \\
                        & = & \left( \sum\limits_{j=1}^{r_1}\lambda_j(\omega_j)_0 \right) \circ \left( \sum\limits_{k=0}^{N_1-1}
        \begin{pmatrix}
            N_1-1 \\
            k
        \end{pmatrix}
    (-\alpha_1)^{N_1-1-k}(d_0\gamma^k) \right) \\
                      & = & 0.
 \end{matrix}
 $$
 We infer that $\omega$ is identically zero. But the operator $(\gamma^*-\alpha_1Id)^{N_1-1}$ is identically zero on $\ker((\gamma^*-\alpha_1Id)^{N_1-1})$ and one-to-one onto its direct image on $\Vect(\{\omega_j \vert \omega_j\notin \ker((\gamma^*-\alpha_1Id)^{N_1-1}) \})$.  Therefore, for every $j$ such that $\omega_j\notin \ker((\gamma^*-\alpha_1Id)^{N_1-1})$, we get $\lambda_j=0$.
 Doing the same argument and using smaller and smaller powers of $\gamma^*-\alpha_1Id$, we get that $\lambda_j=0$ for every $j$.

 This proves that the family $((\omega_1)_0,...,(\omega_{r_1})_0)$ is linearly independent. By a continuity argument, there exists an open neighborhood $U$ of $0$ in $D$ such that for every $z\in U$, $((\omega_1)_z,...,(\omega_{r_1})_z)$ is linearly independent. 

 We are now proving that $r_1=n_1$ the multiplicity of $\alpha_1$ as an eigenvalue of $d_0\gamma$.
 First, notice that $((\omega_1)_0,...,(\omega_{r_1})_0)$ is a linearly independent family of elements of the characteristic space associated to $\alpha_1$ of the endomorphism
 $$
 \begin{matrix}
     (T_0\mathbb C^n)^* & \longrightarrow & (T_0\mathbb C^n)^* \\
     \phi & \longmapsto & \phi\circ (d_0\gamma)
 \end{matrix}
 $$
 and that this characteristic space has dimension $n_1$. Therefore, $r_1\leq n_1$.

 Now, for $j\in\{1,...,n_1\}$, since we have the decomposition
 $$
  H^0_b(D,\Omega^1_D)=\ker((\gamma^*-\alpha_1Id)^{N_1})\oplus\overline{\im((\gamma^*-\alpha_1Id)^{N_1})}
 $$
 there exists a unique $\eta_j\in \ker((\gamma^*-\alpha_1Id)^{N_1})$ such that $dz_j-\eta_j$ is an element of $\overline{\im((\gamma^*-\alpha_1Id)^{N_1})}$. Looking at the Jacobian $J_0\gamma$, for every $\xi\in\im((\gamma^*-\alpha_1Id)^{N_1})$, the coefficient of $\xi_0$ in front of $d_0z_l$ is zero for every $l\in\{1,...,n_1\}$. This property extends easily to the space $\overline{\im((\gamma^*-\alpha_1Id)^{N_1})}$. Moreover, $\eta_j\in \ker((\gamma^*-\alpha_1Id)^{N_1})$, so $(\eta_j)_0=d_0z_j$. Therefore, $((\eta_1)_0,...,(\eta_{n_1})_0)$ is linearly independent, so is $(\eta_1,...,\eta_{n_1})$. Since $\ker((\gamma^*-\alpha_1Id)^{N_1})$ is of dimension $r_1$, we have $n_1\leq r_1$, achieving the proof that $r_1=n_1$.

 For every $j\in\{1,...,n_1\}$ and every $z\in U$, we define 
 $$
 (H_{1,n_1-j})_z = \bigcap_{l=1}^j\ker ((\omega_l)_z).
 $$
 Then,
 $$
    \{0\}\subset H_{1,0} \subset ... \subset H_{1,n_1-1} \subset TU
    $$
    defines a $\gamma$-invariant (non complete) flag on $U$ and since $\gamma$ is a contraction in $0$ and $U$ is an open neighborhood of $0$, this flag extends to $\mathbb C^n$. We define $\mathcal H_1$ to be equal to $H_{1,0}$. Notice that the eigenvalues of $d_0\gamma$ on $(\mathcal H _1)_0$ are exactly $\alpha_2,...,\alpha_m$ with respective multiplicity $n_2,...,n_m$, since it is obvious that $d_0\gamma^*$ has a unique eigenvalue $\alpha_1$ of multiplicity $n_1$ on $(\mathcal H_1)_0^*$.

This concludes the initial case. We will then deal with the induction step.

Assume that $j\in\{1,...,m-1\}$ is such that for every $k\in\{1,...,j\}$, we have a flag of $\gamma$-invariant sub-bundles 
$$
\mathcal H_k \subset H_{k,1} \subset ... \subset H_{k,n_k-1} \subset \bigcap\limits_{l\in I_k}\mathcal H_l
$$
with 
$$
\dim \mathcal H_k=\dim \left( \bigcap_{l\in I_k}\mathcal H_l \right)-n_k
$$
and 
$$
\dim \left( \bigcap_{l\in I_k}\mathcal H_l \right)=n-\sum\limits_{l\in I_k}n_l
$$
as well as 
$$
\dim \left( \bigcap_{l\in I_{j+1}}\mathcal H_l \right)=n-\sum\limits_{l\in I_{j+1}}n_l.
$$
Finally, assume that the vector sub-bundles $\mathcal H_1,...,\mathcal H_j$ are such that for every $k\in\{1,...,j+1\}$, the eigenvalues of $d_0\gamma$ on 
$$
\left( \bigcap_{l\in I_k}\mathcal H_l \right)_0
$$
are the $\alpha_l$ such that $l\notin I_k$ and each $\alpha_l$ as multiplicity $n_l$.

The manifold 
$$
 B:=\bigcap_{k\in I_{j+1}}\mathcal H_k
$$
is a holomorphic vector bundle of rank 
$$
d:=n-\sum\limits_{l\in I_{j+1}}n_l
$$
over $\mathbb C^n$. By the Oka-Grauert principle (\cite{GR04}, p.144), it is holomorphically trivial. Let then $(s_1,...,s_{d})$ be a family of holomorphic sections trivializing $B$ and let $(s_1^*,...,s_{d}^*)$ be the dual family, trivializing the dual bundle $B^*$. Up to the action of $GL(d,\mathbb C)$, we can assume that the following holds
    $$
    Mat_{(s_1(0),...,s_{d}(0))}(d_0\gamma)=\begin{pmatrix}
\begin{matrix}
\alpha_{k_1} &  & (0) \\
 * & \begin{matrix}
\cdot & \\ 
 & \cdot 
\end{matrix} &  \\
* &  * & \alpha_{k_1}
\end{matrix} &  & (0) \\
 & \begin{matrix}
\cdot & &  \\ 
 & \cdot &  \\
& & \cdot
\end{matrix} &  \\
(0) &  & \begin{matrix}
\alpha_{k_{q}} &  & (0) \\
 * & \begin{matrix}
\cdot & \\ 
 & \cdot 
\end{matrix} &  \\
* & * & \alpha_{k_q}
\end{matrix}
\end{pmatrix}
    $$
    where 
    $$
    \{k_1<...<k_q\}=\{0,...,m\}\setminus I_{j+1}
    $$
    and the block corresponding to the eigenvalue $\alpha_{k_l}$ is of size $n_{k_l}$, for $l\in\{1,...,q\}$, as in $J_0\gamma$. Since $j+1$ is an element of $\{0,...,m\}\setminus I_{j+1}$, there exists a unique $l\in\{1,...,q\}$ such that $k_l=j+1$.

    The space $H^0_b(D,B^*)$ is a Banach space, by the Montel Theorem, and the restriction $\tilde{\gamma}$ of $\gamma^*$ to this space is itself a compact operator.

 Looking at the previous matrix, we see that 
 $$
 (s_{n_{k_1}+...+n_{k_l}})^*
 $$
 is not in the image of $\tilde{\gamma}-\alpha_{j+1}Id$. Notice that $(s_{n_{k_1}+...+n_{k_l}})^*$ is bounded since it is globally defined on $\mathbb C^n$ and $D$ is bounded, hence relatively compact. Using Riesz-Schauder Theorem, $\alpha_{j+1}$ is an eigenvalue of $\tilde{\gamma}$ and there exists a positive integer $N_{j+1}$, that we can assume to be greater or equal than $n_{j+1}$, such that the space $\ker((\tilde{\gamma}-\alpha_{j+1}Id)^{N_{j+1}})$ is finite dimensional and such that the following decomposition holds
    $$
    H^0_b(D,B^*)=\ker((\tilde{\gamma}-\alpha_{j+1}Id)^{N_{j+1}})\oplus\overline{\im((\tilde{\gamma}-\alpha_{j+1}Id)^{N_{j+1}})}.
    $$
    We take an arbitrary nonzero element $\varphi$ of $\ker(\tilde{\gamma}-\alpha_{j+1}Id)$ and we will show it does not vanish at $0$. We write
    $$
    \varphi=\sum_{\mu=1}^{d}\sum_{\nu\geq 0}P_{\mu,\nu}s_\mu^*
    $$
     where $P_{\mu,\nu}$ is a homogeneous polynomial of degree $\nu$. For every $\mu\in\{1,...,d\}$, we write 
    $$
    \tilde{\gamma}(s_\mu^*)=\sum_{\tau=1}^{d}f_{\mu,\tau}s_\tau^*
    $$
    where $f_{\mu,\tau}$ are holomorphic functions on $D$. Notice that the matrix $(f_{\mu,\tau}(0))_{\mu,\tau}$ is exactly the matrix
    $$
    \begin{pmatrix}
\begin{matrix}
\alpha_{k_1} &  & (0) \\
 * & \begin{matrix}
\cdot & \\ 
 & \cdot 
\end{matrix} &  \\
* &  * & \alpha_{k_1}
\end{matrix} &  & (0) \\
 & \begin{matrix}
\cdot & &  \\ 
 & \cdot &  \\
& & \cdot
\end{matrix} &  \\
(0) &  & \begin{matrix}
\alpha_{k_q} &  & (0) \\
 * & \begin{matrix}
\cdot & \\ 
 & \cdot 
\end{matrix} &  \\
* & * & \alpha_{k_q}
\end{matrix}
\end{pmatrix}.
    $$
    Since $\varphi$ is nonzero, let $\nu_0$ be the smallest non negative integer such that one of the $P_{\mu,\nu_0}$ is nonzero, and let $\mu_0$ be the largest integer in $\{1,...,d\}$ such that $P_{\mu_0,\nu_0}$ is nonzero. Then the equation
    $$
    \tilde{\gamma}(\varphi)=\alpha_{j+1}\varphi
    $$
    is equivalent to 
    $$
    \sum_{\mu=1}^{d}\sum_{\nu\geq \nu_0}(P_{\mu,\nu}\circ\gamma)\tilde{\gamma}(s_\mu^*)=\alpha_{j+1}\sum_{\mu=1}^{d}\sum_{\nu\geq \nu_0}P_{\mu,\nu}s_\mu^*
    $$
    which is itself equivalent to 
    $$
    \sum_{\mu=1}^{d} \left( \sum_{\tau=1}^d\left( \sum_{\nu\geq \nu_0}P_{\tau,\nu}\circ\gamma \right) f_{\tau,\mu} \right)s_\mu^* = \sum_{\mu=1}^{d}\left(\alpha_{j+1} \sum_{\nu\geq \nu_0} P_{\mu,\nu}\right)s_\mu^*.
    $$
    This last equation is equivalent to the identification of two holomorphic maps, for every $\mu\in \{1,...,d\}$, namely
    $$
    \sum_{\tau=1}^{d}\left( \sum_{\nu\geq \nu_0}P_{\tau,\nu}\circ\gamma \right) f_{\tau,\mu} = \alpha_{j+1} \sum_{\nu\geq \nu_0} P_{\mu,\nu}.
    $$
    This implies, choosing $\mu=\mu_0$, and looking at the Taylor polynomial of order $\nu_0$, the following equation
    $$
    \sum_{\tau=1}^{\mu_0}(P_{\tau,\nu_0}\circ u_0) f_{\tau,\mu_0}(0) = \alpha_{j+1}  P_{\mu_0,\nu_0}.
    $$
    But looking at the form of the matrix $(f_{\mu,\tau}(0))_{\mu,\tau}$, we obtain that 
    $$
    \alpha P_{\mu_0,\nu_0}\circ u_0=\alpha_{j+1} P_{\mu_0,\nu_0}
    $$
    where $\alpha$ is the eigenvalue of $(f_{\mu,\tau}(0))_{\mu,\tau}$ lying at row and column $\mu_0$, say $\alpha_r$ with $r\in \{0,...,m\}\setminus I_{j+1}$, which exactly means that $r=j+1$ or $\alpha_{j+1}$ does not depend on $\alpha_r$.
    This proves that $\frac{\alpha_{j+1}}{\alpha_r}$ is an eigenvalue of the linear endomorphism 
    $$
    \begin{matrix}
        \mathbb C_{\nu_0}^{\Hom}[Z_1,...,Z_n] & \longrightarrow & \mathbb C_{\nu_0}^{\Hom}[Z_1,...,Z_n] \\
        P & \longmapsto & P \circ u_0.
    \end{matrix}
    $$
    But the eigenvalues of this endomorphism are precisely all the possible products of $\nu_0$ eigenvalues of $d_0\gamma$. Therefore, if $\nu_0$ is positive, then there is a resonance 
    $$
    \alpha_{j+1}=\alpha_r \prod_{i=1}^m \alpha_i^{p_i}
    $$
    where the length of $p=(p_1,...,p_m)$ is $\nu_0$, therefore positive. But this kind of relation is impossible, since $r\in \{0,...,m\}\setminus I_{j+1}$.
    Hence, $\nu_0=0$  (and $r=j+1$), which proves that $\varphi$ does not vanish at $0$. 
    
    Since $\ker((\tilde{\gamma}-\alpha_{j+1}Id)^{N_{j+1}})$ is finite dimensional, say of dimension $r_{j+1}$, and stable by $\tilde{\gamma}$, let $(\varphi_1,...,\varphi_{r_{j+1}})$ be a basis of $\ker((\tilde{\gamma}-\alpha_{j+1}Id)^{N_{j+1}})$ such that $\tilde{\gamma}$ is triangular superior in this basis. Then, as in the initial case, we can prove that $((\varphi_1)_0,...,(\varphi_{r_{j+1}})_0)$ is linearly independent, and projecting the family 
    $$
    ((s_{n_{k_1}+...+n_{k_{l-1}}+1})^*,...,(s_{n_{k_1}+...+n_{k_{l}}})^*)
    $$
    on $\ker((\tilde{\gamma}-\alpha_{j+1}Id)^{N_{j+1}})$, as in the initial case, that $r_{j+1}=n_{j+1}$.
    We can then consider, for $k\in\{1,...,n_{j+1}\}$
    $$
    H_{j+1,n_{j+1}-k}=\bigcap\limits_{l=1}^{k}\ker (\varphi_l)
    $$
    in a neighborhood of $0$ and extend those $\gamma$-invariant vector bundles to $\mathbb C^n$, applying the contraction $\gamma$ as much as we need. We denote by $\mathcal H_{j+1}$ the vector bundle $H_{j+1,0}$.
    By construction, we have a flag
    $$
    \mathcal H_{j+1} \subset H_{j+1,1} \subset ... \subset H_{j+1,n_{j+1}-1} \subset \bigcap\limits_{k\in I_{j+1}}\mathcal H_k
    $$
    with 
    $$
    \dim \mathcal H_{j+1}=\dim \bigcap\limits_{k\in I_{j+1}}\mathcal H_k -n_{j+1}.
    $$
    To conclude the induction step, we need to show that, if $j<m-1$, then 
    $$
    \dim \bigcap\limits_{k\in I_{j+2}}\mathcal H_k=n-\sum_{k\in I_{j+2}}n_k.
    $$
    We check first this dimension in $T_0\mathbb C^n$. The spectrum of $d_0\gamma$ on 
    $$
    \left(
    \bigcap \limits _{k\in I_{j+2}}\mathcal H_k
    \right)_0
    $$
    is exactly $\{\alpha_l,~l\notin I_{j+2}\}$ and each $\alpha_l$ has multiplicity $n_l$. Therefore, at $0$, the dimension is what we need. Let $z$ be an element of $\mathbb C^n$ and let $d$ be the dimension of 
    $$
    \left(
    \bigcap \limits _{k\in I_{j+2}}\mathcal H_k
    \right)_z.
    $$
    Since the sequence $(\gamma^p(z))_{p\in\mathbb N}$ converges to $0$, and since $T_0\mathbb C^n$ is trivial, the sequence of vector sub-spaces of $\mathbb C^n$ 
    $$
    \left(d_{\gamma^{p-1}(z)}\gamma\left(
    \bigcap \limits _{k\in I_{j+2}}\mathcal H_k
    \right)_z\right)_{p\in\mathbb N}
    $$
    converges to
    $$
    \left(
    \bigcap \limits _{k\in I_{j+2}}\mathcal H_k
    \right)_0
    $$
    so we have that $d=n-\sum_{k\in I_{j+2}}n_k$.
    Therefore, 
    $$
    \bigcap \limits _{k\in I_{j+2}}\mathcal H_k
    $$
    is a vector bundle of rank $n-\sum_{k\in I_{j+2}}n_k$.
    Hence we proved the induction step, and by induction, we proved the theorem.
\end{proof}

This theorem implies that every Hopf manifold carries special geometries, namely a $G$-structure of order $1$, with a group that is only determined by the eigenvalues of the differential map at $0$ of the contraction. In the following section, we will define a group of polynomial transformations of $\mathbb C^n$. The group $G^1_\alpha$ can be identified as the group of jets of order $1$ of elements of this group.

\section{The group of sub-resonant polynomial transformations}\label{group}

From now on, we will use the following notation
$$
J_0\gamma=
\begin{pmatrix}
    \beta_1 &   &  (0) \\
     & 
     \begin{matrix}
         \cdot & & \\
          & \cdot & \\
           & & \cdot
     \end{matrix}
     & \\
    * & & \beta_n
\end{pmatrix}.
$$
so that 
$$
\begin{matrix}
    \beta_1=...=\beta_{n_1}=\alpha_1 \\
    \cdot \\
    \cdot \\
    \cdot \\
    \beta_{n_1+...+n_{m-1}+1}=...=\beta_{n}=\alpha_m.
\end{matrix}
$$

We define a positive integer $r$ depending on the set of eigenvalues $\{\beta_1,...,\beta_n\}$, namely the maximal length of the resonances, as follows
$$
r=\max\{\vert p \vert ~, p\in \mathbb N^n~\vert ~ \exists ~j\in \{1,...,n\}~ \vert~ \beta_j=\beta^p\}.
$$
Notice that the fact that $r$ is finite was proven by Poincaré (\cite{A88}, Theorem 1 p.188). 

We will define a group of polynomial transformations of $\mathbb C^n$ of order $r$. This group depends on the set of resonances of $\beta=(\beta_1,...,\beta_n)$. For $j\in\{1,...,n\}$ we consider
$$
\begin{matrix}
    r_j=\max \{\vert p \vert ~, p\in \mathbb N^n~\vert ~ \beta_j=\beta^p\}\\
    \\
    \mathcal P_j=\{p\in \mathbb N^n~\vert~ \beta_j=\beta^p \text{ and }~\forall k\geq j, p\neq e_k\}\\
    \\
    \mathcal Q_j=\{q\in \mathbb N^n\setminus \{0\}~\vert~\exists p\in \mathcal P_j~\vert~\forall l\in\{1,...,n\}, q_l\leq p_l\}
\end{matrix}
$$
where $e_k$ is the $k$-th element of the standard basis of $\mathbb Z^n$. The number $r_j$ is the maximal length of resonance of $\beta_j$ (and therefore we have $r=\max r_j$), the set $\mathcal{P}_j$ is the set of resonances of $\beta_j$, where we do not take into account $\beta_j=\beta_{j+l}$ with $l\geq 0$, even if it is the case, and $\mathcal Q_j$ is the set of nonzero $n$-tuples of natural integers that are component-wise not greater than some resonance of $\beta_j$. We refer to Example \ref{exemple poly sub} for clarifications on those technical definitions.

Let $P:\mathbb C^n\to \mathbb C^n$. We say that $P$ is a $\beta$ sub-resonant polynomial transformation of $\mathbb C^n$ if, in the standard coordinates, 
$$
P(z)=(a_1z_1,a_2z_2+P_2(z_1),...,a_nz_n+P_n(z_1,...,z_{n-1}))
$$
where 
$$
\begin{matrix}
    \forall j\in\{1,...,n\}, & a_j\neq 0 \\
    \forall j\in\{2,...,n\}, & P_j(z_1,...,z_{j-1})=\sum\limits_{q\in \mathcal Q_j}c_{j,q}z^q.
\end{matrix}
$$
Notice that such a polynomial map is at most of degree $r$.

\begin{Example}\label{exemple poly sub}
    Assume that $\gamma$ is such that 
    $$
    J_0\gamma=
    \begin{pmatrix}
        \alpha & 0 & 0 \\
        0 & \alpha^3 & 0 \\
        0 & 0 & \alpha^5
    \end{pmatrix}
    =
    \begin{pmatrix}
        \beta_1 & 0 & 0 \\
        0 & \beta_2 & 0 \\
        0 & 0 & \beta_3
    \end{pmatrix}.
    $$
    The non trivial resonances are
    $$
    \begin{matrix}
        \beta_2 & = & \beta_1^3 & = & \beta^{(3,0,0)} \\
        \beta_3 & = & \beta_1^5 & = & \beta^{(5,0,0)} \\
        \beta_3 & = & \beta_1^2\beta_2 & = & \beta^{(2,1,0)}
    \end{matrix}
    $$
    Therefore, we have 
    $$
    \begin{matrix}
        r=5 & r_1=1 & r_2=3 & r_3=5
    \end{matrix}
    $$
    
    $$
    \begin{matrix}
          \mathcal P_1=\emptyset & \mathcal Q_1=\emptyset \\
          \mathcal P_2=\{(3,0,0)\} & \mathcal Q_2=\{(1,0,0),(2,0,0),(3,0,0)\} \\
          \mathcal P_3=\{(5,0,0),(2,1,0)\} & \mathcal Q_3=\{(i,0,0), 1\leq i \leq 5\}\cup\{(0,1,0),(1,1,0),(2,1,0)\} 
    \end{matrix}
    $$

    The polynomial map $P$ given by the expression
    $$
    P(z)=(z_1,z_2+2z_1-z_1^3,z_3+z_1+z_1z_2-2z_1^2z_2 +z_1^4+z_1^5)
    $$
    is sub-resonant. Indeed, the monomials $z_1^3e_2$, $z_1^2z_2e_3$, $z_1^5e_3$ are resonant, i.e. they correspond to elements of the sets $\mathcal P_j$, and the monomials $z_1e_2$, $z_1e_3$, $z_1z_2e_3$, $z_1^4e_3$ are sub-resonant i.e. they correspond to elements of the sets $\mathcal Q_j$.
\end{Example}
We denote by $G^r_\beta$ the set of all $\beta$ sub-resonant polynomial transformations of $\mathbb C^n$. This is a set of polynomial maps of degree at most $r$. We will show that it is a subgroup of the group of holomorphic automorphisms of $\mathbb C^n$.  

\begin{Proposition}\label{proof group}
    The set $G^r_\beta$, endowed with the composition of maps from $\mathbb C^n$ to itself, is a subgroup of the group $Aut(\mathbb C^n)$ of all the holomorphic automorphisms of $\mathbb C^n$.
\end{Proposition}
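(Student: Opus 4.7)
The plan is to check the three group axioms in $G^r_\beta$ (identity, closure under composition, closure under inverse) together with the fact that every element of $G^r_\beta$ belongs to $\Aut(\mathbb{C}^n)$. The identity map lies in $G^r_\beta$ trivially, with $a_j = 1$ and $P_j = 0$ for all $j$. Invertibility of an individual $P \in G^r_\beta$ follows from its triangular structure with nonzero diagonal entries $a_1, \ldots, a_n$: solving $w = P(z)$ recursively produces a polynomial expression for each $z_j$ in terms of $w_1, \ldots, w_j$, so $P$ is a polynomial biholomorphism of $\mathbb{C}^n$. The nontrivial work is to show that composition and inversion preserve the support condition encoded by the sets $\mathcal{Q}_j$.

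The heart of the argument is a combinatorial substitution lemma. First I would observe that every $p \in \mathcal{P}_j$ is supported on $\{1, \ldots, j-1\}$: from $|\beta_j| = \prod_i |\beta_i|^{p_i}$ together with $1 > |\alpha_1| \geq \cdots \geq |\alpha_m|$, no $p_k$ with $|\beta_k| < |\beta_j|$ can be positive, and within the class of $\beta_j$ the equation $\beta^p = \beta_j$ combined with $|\beta_i| < 1$ forces $p = e_l$ for some $l$ in that class; such an $l \geq j$ is then excluded by the very definition of $\mathcal{P}_j$. As a consequence $\mathcal{Q}_j$ is supported on $\{1, \ldots, j-1\}$ as well. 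The substitution lemma then reads: if $r \in \mathcal{Q}_j$, if $r_k \geq 1$ for some index $k$ (necessarily $k < j$), and if $s \in \mathcal{Q}_k$, then $r - e_k + s \in \mathcal{Q}_j$. To prove it, choose witnesses $r \leq p \in \mathcal{P}_j$ and $s \leq p' \in \mathcal{P}_k$, and set $\tilde{p} := p - e_k + p'$; then $\beta^{\tilde{p}} = \beta_j \cdot \beta_k^{-1} \cdot \beta_k = \beta_j$, while $\tilde{p}$ is supported in $\{1, \ldots, j-1\}$ since both $p$ and $p'$ are, so $\tilde{p} \neq e_l$ for any $l \geq j$. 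Hence $\tilde{p} \in \mathcal{P}_j$ and $r - e_k + s \leq \tilde{p}$, giving the conclusion.

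Closure under composition then follows by direct expansion. For $P, Q \in G^r_\beta$ the $j$-th component of $P \circ Q$ equals $a_j a'_j z_j + a_j Q'_j + P_j(Q_1, \ldots, Q_{j-1})$; the leading coefficient is $a_j a'_j \neq 0$, the middle term has all its monomials indexed by $\mathcal{Q}_j$, and every monomial in the last term, obtained by binomial expansion of the $Q_k^{q_k}$ for $q \in \mathcal{Q}_j$, carries an exponent of the form $\sum_k (i_k e_k + \sum_l s^{(k,l)})$ with $s^{(k,l)} \in \mathcal{Q}_k$. Iterating the substitution lemma, starting from $q$ and replacing one $e_k$ by an $s^{(k,l)}$ at each step, shows that this exponent lies in $\mathcal{Q}_j$. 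Closure under inverse is handled by induction on $j$: assuming $P^{-1}_1, \ldots, P^{-1}_{j-1}$ are already in sub-resonant form, substitute into the relation $z_j = a_j^{-1}(w_j - P_j(z_1, \ldots, z_{j-1}))$ and apply the same expansion and iterated lemma to conclude that $P^{-1}_j$ is sub-resonant with diagonal coefficient $1/a_j \neq 0$.

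The main obstacle is the substitution lemma, and more precisely the preliminary observation that every element of $\mathcal{P}_j$ is supported on $\{1, \ldots, j-1\}$. This is what prevents $\tilde{p}$ from collapsing onto an $e_l$ with $l \geq j$ and thereby escaping $\mathcal{P}_j$. Without this support property, one would be forced into a more delicate case analysis to rule out such degenerations.
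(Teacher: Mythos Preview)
Your proof is correct and rests on the same combinatorial core as the paper's: showing that the exponent of every monomial produced in a composition is dominated by some element of $\mathcal{P}_j$, by replacing each occurrence of a sub-resonance $s\in\mathcal{Q}_k$ with a witnessing resonance $p'\in\mathcal{P}_k$. The organisation, however, differs. The paper runs an induction on the ambient dimension $n$, so that at each step only the last coordinate needs checking (where the single fact $q_{n+1}=0$ suffices), and then writes down in one stroke the dominating multi-index $\tilde p = p - \sum_j\bigl(\sum_k i_k^{(j)}\bigr)e_j + \sum_{j,k} i_k^{(j)}\underline{p}_k^{(j)}$. You instead work in fixed dimension and isolate two reusable facts: the support observation that every $p\in\mathcal{P}_j$ lives in $\{1,\dots,j-1\}$, and the one-step substitution lemma $r-e_k+s\in\mathcal{Q}_j$, which you then iterate. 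Your packaging is arguably cleaner and makes the verification that $\tilde p\in\mathcal{P}_j$ (in particular $\tilde p\neq e_l$ for $l\ge j$) more transparent; the paper's induction on $n$ buys the same conclusion without ever stating the full support claim. One small point: your justification of the support observation is slightly telegraphic --- the case you call ``within the class of $\beta_j$'' implicitly uses that once some $p_l>0$ with $|\beta_l|=|\beta_j|$, the presence of any other index $i$ with $|\beta_i|>|\beta_j|$ would force $|\beta_j|<|\beta_j|$; it would be worth spelling this out.
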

\begin{proof}
    The following proof is technical, but we will give some intuitive ideas first. It is already known that the set of $\beta$ resonant polynomial maps is a group for the composition of maps (\cite{LN96}, Section 1 Proposition 2). The idea is that since the sub-resonant monomials are by definition not greater than some resonant monomial and since the composition of resonant monomials remains resonant, the composition of sub-resonant monomials remains sub-resonant.
    
    We will show this result by induction on the dimension $n$.
    If $n=2$, then let $P,\tilde{P}$ be elements of $G^r_\beta$. We use the same notations as in the definition above, namely
    $$
    \begin{matrix}
        P(z)=(a_1z_1,a_2z_2+P_2(z_1)) \\
        \tilde{P}(z)=(b_1z_1,b_2z_2+\tilde{P_2}(z_1)).
    \end{matrix}
    $$
    A monomial appearing in $\tilde{P_2}$ is a $z^q$ with $q\in\mathcal{Q}_2$, i.e. a $z_1^{q_1}$ with $1\leq q_1\leq d$ where $\beta_2=\beta_1^d$. In case we don't have this type of resonances, then $\mathcal Q_2$ is empty and $G^r_\beta$ is obviously a group, since it is the group of linear diagonal transformations of $\mathbb C^2$.
    In the resonant case, we have that 
    $$
    z_1^{q_1}\circ P (w)=a_1^{q_1}w_1^{q_1}.
    $$
    This identity proves that $\tilde{P}\circ P$ is an element of $G^r_\beta$.
    Moreover, $P$ is obviously one-to-one, and 
    $$
    P^{-1}(w)=\left(\frac{w_1}{a_1},\frac{w_2-P_2(\frac{w_1}{a_1})}{a_2}\right)
    $$
    which is also an element of $G^r_\beta$. Therefore, if $n=2$, then $G^r_\beta$ is a subgroup of $Aut(\mathbb C^2)$.

    Let now $n\in \mathbb N$ be such that for every integer $m$ satisfying $2\leq m \leq n$ and every set of eigenvalues $(\eta_1,...,\eta_m)$ satisfying 
    $$
    1\geq \vert \eta_1 \vert \geq ... \geq \vert \eta_m \vert \geq 0
    $$
    the set $G_\eta$ is a group.
    Let $(\beta_1,...,\beta_{n+1})\in \mathbb C^{n+1}$ satisfying 
    $$
     1\geq \vert \beta_1 \vert \geq ... \geq \vert \beta_{n+1} \vert \geq 0.
    $$
    Let $P,\tilde{P}$ be $\beta$ sub-resonant. Looking at the composition $\tilde{P}\circ P$, the $n$ first coordinates form a composition of two $(\beta_1,...,\beta_n)$ sub-resonant polynomial maps. Therefore, using the induction hypothesis, we only focus on the $(n+1)$-th coordinate.
    The polynomial $\tilde{P}_{n+1}(z_1,...,z_n)$ contains monomials of the form $z^q$ with $q\in\mathcal Q_{n+1}$. By definition of $\mathcal Q_{n+1}$, there exists an element $p$ of $\mathcal P_{n+1}$ such that $q\leq p$ (coordinate by coordinate).
    We use the notation 
    $$
    P(z)=(a_1z_1+P_1(z),...,a_{n+1}z_{n+1}+P_{n+1}(z))
    $$
    where $a_j\neq 0$ and 
    $$
    P_j(z)=\sum_{s\in\mathcal Q_j}c_{j,s}z^s.
    $$
    Then we compute 
    $$
   \begin{matrix}
       z^q\circ P(w) & = & \prod\limits_{j=1}^{n+1}(a_jw_j+P_j(w))^{q_j} \\
                                     & = & \prod\limits_{j=1}^{n+1}(a_jw_j+\sum\limits_{s\in\mathcal Q_j}c_{j,s}w^s)^{q_j}.
   \end{matrix} 
    $$
    We denote by $N_j$ the cardinality of $\mathcal Q_j$ and $\mathcal Q_j=\{s_1^{(j)},...,s_{N_j}^{(j)}\}$. Then we have
    $$
    \begin{matrix}
       z^q\circ P(w) & = & \prod\limits_{j=1}^{n+1} \left(
            \sum\limits_{i_1+...+i_{N_j+1}=q_j}\frac{q_j!}{i_1!...i_{N_j+1}!} \left(
\prod\limits_{k=1}^{N_j}\left(
\left(
c_{j,s_k^{(j)}}w^{s_k^{(j)}}
\right)^{i_k}
\right)
            \right)(a_jw_j)^{i_{N_j+1}}
       \right) \\
       & = & 
       \prod\limits_{j=1}^{n+1} \left(
\sum\limits_{i_1+...+i_{N_j+1}=q_j}\frac{q_j!}{i_1!...i_{N_j+1}!} \left(
\prod\limits_{k=1}^{N_j}c_{j,s_k^{(j)}}^{i_k}
\right)a_j^{i_{N_j+1}}w^{
\left(
\sum\limits_{k=1}^{N_j}i_ks_k^{(j)}
\right)+i_{N_j+1}e_j
}
       \right)
   \end{matrix}
    $$
    where, as usual, $(e_1,...,e_{n+1})$ is the standard basis of $\mathbb Z^{n+1}$, so that the monomials appearing are $w$ to the power of  
    $$
    \sum_{j=1}^{n+1}\left(
    \sum_{k=1}^{N_j}i_k^{(j)}s_k^{(j)}+i_{N_j+1}^{(j)}e_j
    \right)
    $$
    where, for every $j\in\{1,...,n+1\}$, we have
    $$
    \sum_{k=1}^{N_j+1}i_k^{(j)}=q_j.
    $$
    We need to explain that for every 
    $$
    (i_{1}^{(1)},...,i_{N_1+1}^{(1)}),...,(i_{1}^{(n+1)},...,i_{N_{n+1}+1}^{(n+1)})
    $$
    satisfying 
    $$
    \sum_{k=1}^{N_j+1}i_k^{(j)}=q_j
    $$ the $(n+1)$ tuple 
    $$
    \sum_{j=1}^{n+1}\left(
    \sum_{k=1}^{N_j}i_k^{(j)}s_k^{(j)}+i_{N_j+1}^{(j)}e_j
    \right)
    $$
    is bounded above by an element of $\mathcal P_{n+1}$.
    First, we notice that $q_{n+1}=0$, implying that, for every $k\in\{1,...,N_{n+1}+1\}$, we have $i_k^{(n+1)}=0$. We can rewrite the $(n+1)$ tuple as 
    $$
    \sum_{j=1}^{n}\left(
    \sum_{k=1}^{N_j}i_k^{(j)}s_k^{(j)}+i_{N_j+1}^{(j)}e_j
    \right).
    $$
    By definition of the sets $\mathcal Q_j$, for every $j\in\{1,...,n\}$, for every $k\in\{1,...,N_j\}$, there exists an element $\underline{p}_k^{(j)}$ of $\mathcal P_j$ such that $s_k^{(j)}\leq \underline{p}_k^{(j)}$. We recall that $p$ is an element of $\mathcal P_{n+1}$ such that $q\leq p$. We consider 
    $$
    \tilde{p}=p-\sum_{j=1}^n\left(
\sum_{k=1}^{N_j}i_k^{(j)}
    \right)e_j+\sum_{j=1}^n\sum_{k=1}^{N_j}i_k^{(j)}\underline{p}_k^{(j)}
    $$
    and we check that $\beta^{\tilde{p}}=\beta_{n+1}$, meaning that $\tilde{p}$ is an element of $\mathcal P_{n+1}$. Since $p\geq q$ and $\underline{p}_k^{(j)}\geq s_k^{(j)}$, we have 
    $$
    \tilde{p}\geq q -\sum_{j=1}^n\left(
\sum_{k=1}^{N_j}i_k^{(j)}
    \right)e_j+\sum_{j=1}^n\sum_{k=1}^{N_j}i_k^{(j)}s_k^{(j)},
    $$
    but we have 
    $$
    q -\sum_{j=1}^n\left(
\sum_{k=1}^{N_j}i_k^{(j)}
    \right)e_j=\sum_{j=1}^ni_{N_j+1}^{(j)}e_j.
    $$
    Therefore, we deduce that 
    $$
    \tilde{p} \geq \sum_{j=1}^{n}\left(
    \sum_{k=1}^{N_j}i_k^{(j)}s_k^{(j)}+i_{N_j+1}^{(j)}e_j
    \right)
    $$
    which is what we wanted. This shows that $\tilde{P}\circ P$ is an element of $G_\beta$.
    To conclude, we write 
    $$
    P^{-1}(w)=\left(
\frac{w_1}{a_1},\frac{w_2-P_2(\frac{w_1}{a_1})}{a_2},...,\frac{w_{n+1}-P_{n+1}(...)}{a_{n+1}}
    \right).
    $$
    The induction hypothesis allows us to look only at the $(n+1)$-th coordinate, that we treat exactly the same way as for the composition.

    This concludes the induction and therefore, the proof that $G^r_\beta$ is a subgroup of $Aut(\mathbb C^n)$.   
\end{proof}

To conclude this section, we notice that the group $G^r_\beta$ can be seen as a subgroup of the jets group $\mathcal D^r(\mathbb C^n)$, and that it is a closed subgroup. Therefore, it carries a natural structure of complex Lie subgroup of $\mathcal D^r(\mathbb C^n)$.

\begin{Remark}
    In relation to Remark \ref{rmk algebraic}, we notice also that $G^r_\beta$ is an algebraic subgroup of $\mathcal{D}^r(\mathbb C^n)$.
\end{Remark}

\section{Geometric structures of higher order on Hopf manifolds}\label{geo str ord sup}

For every $k\in \mathbb N^*$, we consider the group 
$$
G^k_\beta:=\{j^k_0 P ~\vert~ P\in G^r_\beta\}
$$
endowed with the order $k$ truncated composition. 
Notice that since $G^r_\beta$ is a group of polynomial maps of degree at most $r$, for every $k\geq r$, we have $G^k_\beta=G^r_\beta$. Notice also that $G^1_\beta$ coincides with the $G_\alpha^1$ of Section \ref{geo str ord 1}. We will show that we can extend a given $G^1_\beta$-structure of order $1$ step by step to get a uniquely determined $G_\beta^k$-structure of order $k$, for any positive integer $k$ in the case of dimension $n\geq3$, and for $k\in\{r+1,r+2\}$ for surfaces.

The case of Hopf surfaces has to be treated separately. The reason for this special treatment is a cohomological property that is different between $\mathbb C^2\setminus\{0\}$ and $\mathbb C^n\setminus\{0\}$, for $n\geq 3$. Indeed, if $\mathcal{O}_X$ stands for the the sheaf of holomorphic functions on a complex manifold $X$, then we have the following properties (see for example \cite{Fr57} or \cite{LN96}, Section 2.1)
$$
\left\{
\begin{matrix}
    H^1(\mathbb C^n\setminus\{0\},\mathcal{O}_{\mathbb C^n\setminus\{0\}})=0 \text{ for } n\geq 3 \\
    H^1(\mathbb C^2\setminus\{0\},\mathcal{O}_{\mathbb C^2\setminus\{0\}})\cong \left\{\sum\limits_{p\in (-\mathbb N^*)^2}a_pz^p \text{ convergent on }(\mathbb C^*)^2\right\}.
\end{matrix}
\right.
$$
Because of this phenomenon, Mall's theorem fails in dimension two, as noticed in \cite{Ma96}, Remark 4.4.. In Section \ref{cas surfaces}, we will deal with the case of Hopf surfaces.

\subsection{High dimension}\label{cas sup deux}

This section is devoted to the proof of the following result. 

\begin{Theorem}\label{recurrence pour les structures}
    Assume that $n\geq3$. Let $k$ be a positive integer and let $s^{(k)}:M\to \mathcal R^k(M)/G^k_\beta$ be a reduction of the structure group of $\mathcal R^k(M)$ to $G^k_\beta$. Then there exists a unique reduction $s^{(k+1)}:M\to \mathcal R^{k+1}(M)/G^{k+1}_\beta$ of the structure group of $\mathcal R^{k+1}(M)$ to $G^{k+1}_\beta$ such that
    $$
    \overline{\pi}_{k+1,k}\circ s^{(k+1)}=s^{(k)}
    $$
    where 
    $$
    \overline{\pi}_{k+1,k}:\mathcal R^{k+1}(M)/G^{k+1}_\beta \to \mathcal R^k(M)/G^k_\beta
    $$
    is the natural projection.
\end{Theorem}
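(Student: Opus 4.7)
The plan is to encode the lifting as a cohomological problem and invoke a Mall-style vanishing result.

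First, I would analyse the fibre of $\overline{\pi}_{k+1,k}$ and rewrite the existence of $s^{(k+1)}$ as a section problem. Let $V_{k+1}:=\ker(\rho_{k+1,k})\subset\mathcal D^{k+1}(\mathbb C^n)$ be the vector group of $(k+1)$-jets tangent to the identity at order $k$; as a complex vector space $V_{k+1}\simeq S^{k+1}((\mathbb C^n)^*)\otimes\mathbb C^n$. Let $K_{k+1}:=\ker(G^{k+1}_\beta\to G^k_\beta)$; it is the vector subspace of $V_{k+1}$ spanned by the degree-$(k+1)$ sub-resonant monomials $z^p e_i$ with $p\in\mathcal Q_i$ and $p_l=0$ for every $l\geq i$. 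Pulling back $\overline{\pi}_{k+1,k}$ along $s^{(k)}$ yields an affine bundle $A\to M$ whose model vector bundle is
$$
E:=P^{(k)}\times_{G^k_\beta}(V_{k+1}/K_{k+1}),
$$
where $P^{(k)}\subset\mathcal R^k(M)$ is the principal $G^k_\beta$-subbundle given by $s^{(k)}$ and $G^k_\beta$ acts on $V_{k+1}/K_{k+1}$ by conjugation. A lift $s^{(k+1)}$ as in the theorem is exactly a section of $A$, so existence and uniqueness are equivalent to $H^1(M,E)=0$ and $H^0(M,E)=0$ respectively.

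Second, I would check that $E$ is a Mall bundle and apply Mall's theorem. The $\gamma$-invariant $G^k_\beta$-structure pulls back to a $\gamma$-invariant principal $G^k_\beta$-bundle on $\mathbb C^n\setminus\{0\}$ which, by the Oka--Grauert principle, is holomorphically trivial; by Hartogs it extends to a trivial bundle on $\mathbb C^n$, whence $p^*E$ is trivial. The hypothesis $n\geq 3$ ensures $H^1(\mathbb C^n\setminus\{0\},\mathcal O)=0$, which is where the dimension enters. Choosing a $\gamma$-absorbing open $D\subset\mathbb C^n$ as in the proof of Theorem \ref{th 1}, Mall's theorem identifies $H^0(M,E)$ and $H^1(M,E)$ with the kernel and the cokernel of $\gamma^*-\mathrm{Id}$ on the Banach space $H^0_b(D,E|_D)$; by the same Montel argument as in Theorem \ref{th 1} this operator is compact, so Riesz--Schauder reduces both vanishings to showing that $1$ is not an eigenvalue of $\gamma^*$.

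Third comes the spectral input, modelled on the proof of Theorem \ref{th 1}. Assume $\sigma\in H^0_b(D,E|_D)$ is a nonzero eigenvector for the eigenvalue $1$, Taylor expand $\sigma$ at $0$ and isolate its lowest-order nonzero homogeneous piece; it must be an eigenvector of the linearised action $u_0^*$ with eigenvalue $1$. After triangularising $u_0$ and filtering by leading monomials as in Theorem \ref{th 1}, the eigenvalues of $u_0^*$ on terms of the form $z^q\otimes[z^p e_i]$ with $|p|=k+1$ and $[z^p e_i]\neq 0$ in $V_{k+1}/K_{k+1}$ are the ratios $\beta^{p+q}/\beta_i$. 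An eigenvalue equal to $1$ demands a resonance $\beta^{p+q}=\beta_i$ with $|p+q|\geq k+1\geq 2$. Using $1>|\beta_1|\geq\cdots\geq|\beta_n|>0$, a direct norm inequality forces $(p+q)_l=0$ for every $l\geq i$, and hence $p+q\in\mathcal P_i$. Since $p$ is nonzero and bounded componentwise by $p+q\in\mathcal P_i$, it belongs to $\mathcal Q_i$, and $p_l=0$ for $l\geq i$ is automatic; so $z^p e_i\in K_{k+1}$, contradicting $[z^p e_i]\neq 0$. Therefore $\gamma^*-\mathrm{Id}$ is invertible and the unique lift $s^{(k+1)}$ exists.

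The main obstacle is the careful bookkeeping of the $z$-dependence of the $\gamma$-action on $E$ in any trivialisation and of the non-diagonal part of $u_0$ in the spectral step. Both are treated by the devices already deployed in Theorem \ref{th 1}, namely filtration by leading Taylor monomials, triangular bases, and iterating powers of $\gamma^*-\lambda\,\mathrm{Id}$ to absorb the nilpotent contributions; the genuinely new ingredient is the resonance-cancellation computation of the previous paragraph.
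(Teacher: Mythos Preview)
Your proposal is correct and follows essentially the same route as the paper: both set up the lift as a section of an affine bundle whose underlying vector bundle $E$ (the paper's $B$) has fibre $V_{k+1}/K_{k+1}\cong V^{k+1}/V^{k+1}_\beta$, observe that $E$ is a Mall bundle, use Mall's result (the paper cites $h^0(M,B)=h^1(M,B)$ directly, while you unpack it via the Douady sequence and Riesz--Schauder) to reduce to $H^0(M,E)=0$, and then kill any putative $\gamma$-invariant section by the same lowest-order Taylor/resonance argument showing that the offending monomial would already lie in $K_{k+1}$. The only cosmetic difference is that your extra condition ``$p_l=0$ for $l\geq i$'' in the description of $K_{k+1}$ is redundant (it is automatic for $q\in\mathcal Q_i$ by the modulus inequality you invoke later), and what you call ``Mall's theorem'' is more precisely the Douady exact sequence combined with $H^1(\mathbb C^n\setminus\{0\},\mathcal O)=0$ and the Fredholm index-zero statement underlying Mall's proof.
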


\begin{proof}
    Let $M=\cup_i U_i$ be a covering of $M$ by open subsets and let $(\varphi_i)$ local holomorphic charts on $U_i$ such that 
    $$
    s^{(k)}\vert_{U_i}=[s^{(k)}_i]
    $$
    where 
    $$
    \begin{matrix}
         s^{(k)}_i & : & M & \longrightarrow & \mathcal R^k(M) \\
                               &    & m & \longmapsto & j^k_0(\varphi_i^{-1}\theta_{\varphi_i(m)}f_{i,m})
    \end{matrix}
    $$
    where $\theta_z$ is the translation in $\mathbb C^n$ of vector $z\in\mathbb C^n$ and $j^k_0(f_{i,m})$ is the jet of order $k$ at $0$ of a germ of biholomorphism $(\mathbb C^n,0)\to (\mathbb C^n,0)$.
    We consider the following diagram
    \begin{center}
            \begin{tikzcd}
               (s^{(k)})^*\left( \mathcal R^{k+1}(M)/G^{k+1}_\beta \right) \arrow[r] \arrow[d,"p"] & \mathcal R^{k+1}(M)/G^{k+1}_\beta \arrow[d,"\overline{\pi}_{k+1,k}"] \\
                M \arrow[r,"s^{(k)}"] & \mathcal R^k(M)/G^k_\beta
            \end{tikzcd}
    \end{center}
    where the upper left object is the pullback of the diagram. We will denote by $A$ the space 
    $$
    (s^{(k)})^*\left( \mathcal R^{k+1}(M)/G^{k+1}_\beta \right).
    $$

    We claim that the map $p:A\to M$ is an affine bundle. The fiber of this bundle is isomorphic to the quotient 
    $$
    V^{k+1}/V^{k+1}_\beta
    $$
    where $V^{k+1}$ is the space of homogeneous polynomial maps of degree $k+1$ from $\mathbb C^n$ to $\mathbb C^n$ and $V^{k+1}_\beta$ is the subspace generated by the monomials of the form 
    $$
    z^qe_j
    $$ where $q\in \mathcal Q_j$ is of length $k+1$, i.e. $q$ is a sub-resonance of $\beta_j$ of length $k+1$. To see that, we consider 
    $$
    \begin{matrix}
        \psi_i & : & A\vert_{U_i} & \longrightarrow & U_i\times V^{k+1}/V^{k+1}_\beta \\
                       &    & (m,[j_0^{k+1}(\varphi_i^{-1}\theta_{\varphi_i(m)}(f_{i,m})_k(id+P))]) & \longmapsto & (m,[P])
    \end{matrix}
    $$
    where $(f_{i,m})_k$ is the truncating at order $k$ of $f_{i,m}$.
    We compute 
    $$
    \begin{matrix}
        \psi_j\psi_i^{-1}(m,[P]) & = & \psi_j(m,[j^{k+1}_0(\varphi_i^{-1}\theta_{\varphi_i(m)}(f_{i,m})_k(id+P))]) \\ 
        &&\\
         & = & \psi_j(m,[j^{k+1}_0(\varphi_j^{-1}\theta_{\varphi_j(m)}(f_{j,m})_k(((f_{j,m})_k)^{-1}\theta_{-\varphi_j(m)}\varphi_j\\& &\varphi_i^{-1}\theta_{\varphi_i(m)}(f_{i,m})_k)(id+P))]). 
    \end{matrix}
    $$
    But, by construction,
    $$
    j^k_0(((f_{j,m})_k)^{-1}\theta_{-\varphi_j(m)}\varphi_j\varphi_i^{-1}\theta_{\varphi_i(m)}(f_{i,m})_k)
    $$
    is an element of $G^k_\beta$. Therefore, we see that 
    $$
    \psi_j\psi_i^{-1}(m,[P])=(m,[\mathcal L_{j,i,m}(P)+Q_{j,i,m}])
    $$
    where 
    $$
    \mathcal L_{j,i,m}(P)=\phi_{j,i,m} \circ P\circ \phi_{j,i,m}^{-1}
    $$
    with 
    $$
    \phi=((f_{j,m})_1)^{-1}(\theta_{-\varphi_j(m)}\varphi_j\varphi_i^{-1}\theta_{\varphi_i(m)})_1(f_{i,m})_1
    $$
    and $Q_{j,i,m}\in V^{k+1}$ is independent of $P$.
    This proves that $p$ is an affine bundle with typical fiber $V^{k+1}/V^{k+1}_\beta$. 

    To prove the theorem is equivalent to prove that the affine bundle $p$ admits a unique global section. We will use a cohomological argument in order to prove that. Namely, if $\xi,\zeta$ are two local sections of $p$, say on an open set $U$, then the difference $\xi-\zeta$ is well defined as a section of the vector bundle, denoted by $B$, associated with $p$. Since $p$ is locally trivial, we can choose local sections on trivialization open sets and consider the difference of those sections on the intersection of the open sets of the covering. Therefore the existence of a global section of $p$ is equivalent to the vanishing of a cohomology class in the first sheaf cohomology group of the associated vector bundle $B$. Moreover, a global section of the affine bundle is unique if and only if there exists a unique global section of $B$, namely the zero section. To rephrase, the sub-resonant monomials are exactly the indeterminacy to extend the structure to the superior order. Extending up to these monomials, i.e. considering the quotient $ V^{k+1}/V^{k+1}_\beta$, determines completely how to extend the structure. 

    Let us then study this holomorphic vector bundle $B$ on $M$. Using the above computation, we have a description of the cocycle of $B$. Namely, it is given by $\phi_{i,j,m}$. Therefore, we can see that the principal bundle associated with $B$ is the first order frame bundle on $M$, i.e. $\mathcal R^1(M)$. Indeed, they have the same cocycle, up to the action of the cochain $\{(f_{i,m})_1\}$. This shows that if $\pi:\mathbb C^n\setminus \{0\} \to M$ is the universal covering of $M$, then the pulled back vector bundle $\overline{B}:=\pi^*B$ is trivial over $\mathbb C^n\setminus \{0\}$ and it extends to $\mathbb C^n$. Therefore, $B$ is a Mall bundle over the Hopf manifold $M$. Since we are in the case $n\geq3$, the Mall theorem implies that 
    $$
    h^0(M,B)=h^1(M,B)
    $$
    where $h^i(M,B)$ is the dimension of the $i$-th sheaf cohomology space of the vector bundle $B$. Using that, it is sufficient to prove that $H^0(M,B)$ is trivial to finish the proof of the theorem.

    The fiber over $0$ of the bundle $\overline{B}$, namely $\overline{B}_0$ is isomorphic to the space 
    $$
    V^{k+1}/V^{k+1}_\beta
    $$
    and in a well chosen isomorphism, the action of $\gamma$ on $\overline{B}_0$ reads
    $$
    \gamma.P=u_0^{-1}\circ P \circ u_0
    $$
    where, as above, $u_0$ is the linear part of $\gamma$ and where $P$ is a homogeneous polynomial map of order $k+1$ from $\mathbb C^n$ to itself containing only non sub-resonant monomials, because $P$ is an element of $V^{k+1}/V^{k+1}_\beta$. We deduce that the eigenvalues of the action of $\gamma$ on $\overline{B}_0$ are exactly of the form 
    $$
    \beta_j^{-1}\beta^p
    $$
    where the length of $p$ is $k+1$ and $p$ is not $j$ sub-resonant, i.e. $p\notin \mathcal Q_j$ where this last set is defined above.

    We choose $(\eta_1,...,\eta_d)$ a family of global trivializing sections of $\overline{B}$. Suppose that $\overline{B}$ carries a nonzero global section $\eta$. Then we can write 
    $$
    \eta=\sum_{i=1}^d\sum_{\nu\geq 0}P_{i,\nu}\eta_i
    $$
    where $P_{i,\nu}$ is a homogeneous polynomial of degree $\nu$. Since $\eta$ is nonzero, let $\nu_0$ the minimal degree where not all the $P_{i,\nu}$ are zero. Suppose also that $\eta$ is $\gamma$ invariant, then looking at the equation 
    $$
    \gamma.\eta=\eta
    $$
    and looking at the order $\nu_0$, we deduce that the linear endomorphism
    $$
    \begin{matrix}
        \mathbb C^{\Hom} _{\nu_0}[Z_1,...,Z_n]\otimes V^{k+1}/V^{k+1}_\beta & \longrightarrow & \mathbb C^{\Hom}_{\nu_0}[Z_1,...,Z_n]\otimes V^{k+1}/V^{k+1}_\beta \\
        Q\otimes P & \longmapsto & (Q\circ u_0) \otimes (u_0^{-1}\circ P \circ u_0)
    \end{matrix}
    $$
    carries $1$ as an eigenvalue. But this implies that there exists a resonance 
    $$
    \beta_j=\beta^p\beta^{p'}
    $$
     where the length of $p$ is $k+1$ and $p$ is not $j$ sub-resonant, and where the length of $p'$ is $\nu_0$. But this is impossible, because it would imply that $p$ is sub-resonant. 

     This proves that 
     $$
     H^0(M,B)=\{0\}
     $$
     which, as we said above, is enough to conclude the proof of the theorem.
    
\end{proof}

This theorem combined with the existence of the $G^1_\beta$-structure of order $1$ proved in Section \ref{geo str ord 1} allow to write the following corollary, proved by induction.

\begin{Corollary}\label{existence de structure a tout ordre}
    Let $M$ be a Hopf manifold of dimension at least $3$ and $\beta=(\beta_1,...,\beta_n)$ be the set of eigenvalues associated with this Hopf manifold. Then, for every integer $k\geq 1$, $M$ carries $G^k_\beta$-structure of order $k$. Moreover, every $G^k_\beta$-structure of order $k$ is completely determined by the induced $G^1_\beta$-structure of order $1$.
\end{Corollary}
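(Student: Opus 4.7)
The plan is a straightforward induction on $k$, assembling the two ingredients already established: Theorem \ref{th 1} provides the base case, and Theorem \ref{recurrence pour les structures} provides the inductive step. Since the corollary makes two claims (existence for every $k$ and uniqueness given the order-$1$ structure), I would treat them in turn.

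For existence, the base case $k=1$ is exactly Theorem \ref{th 1}, after noting that the group $G^1_\beta$ defined as the order-$1$ jet group of $G^r_\beta$ coincides with the group $G^1_\alpha$ appearing in Section \ref{geo str ord 1}; indeed, the order-$1$ jet at $0$ of a sub-resonant polynomial transformation $(a_1 z_1, a_2 z_2 + P_2(z_1), \dots, a_n z_n + P_n(z_1, \dots, z_{n-1}))$ is precisely an invertible lower-triangular matrix whose off-diagonal entry in position $(i,j)$ can be non-zero only when $e_j \in \mathcal{Q}_i$, i.e.\ exactly when $\alpha_{k_i}$ depends on $\alpha_{k_j}$. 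Assuming inductively that $M$ carries a $G^k_\beta$-structure $s^{(k)}$, Theorem \ref{recurrence pour les structures} produces (in fact uniquely) a $G^{k+1}_\beta$-structure $s^{(k+1)}$ with $\overline{\pi}_{k+1,k} \circ s^{(k+1)} = s^{(k)}$, which closes the induction.

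For uniqueness, I would argue that if $s^{(k)}_1$ and $s^{(k)}_2$ are two $G^k_\beta$-structures on $M$ satisfying $\overline{\pi}_{k,1} \circ s^{(k)}_1 = \overline{\pi}_{k,1} \circ s^{(k)}_2$, then $s^{(k)}_1 = s^{(k)}_2$, by a second induction on $k$. The case $k=1$ is tautological. For the inductive step, the hypothesis on the order-$1$ projections combined with the inductive hypothesis applied at order $k-1$ shows that $\overline{\pi}_{k,k-1} \circ s^{(k)}_1 = \overline{\pi}_{k,k-1} \circ s^{(k)}_2$; the uniqueness clause of Theorem \ref{recurrence pour les structures} then forces $s^{(k)}_1 = s^{(k)}_2$.

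There is no real obstacle here: both assertions are purely bookkeeping on top of the two theorems already proved. The only small point that deserves verification, and which I would note explicitly, is that the projection $\overline{\pi}_{k,k-1}: \mathcal{R}^k(M)/G^k_\beta \to \mathcal{R}^{k-1}(M)/G^{k-1}_\beta$ does send a $G^k_\beta$-structure to a genuine $G^{k-1}_\beta$-structure; this is immediate because the truncation $\rho_{k,k-1}: G^k_\beta \to G^{k-1}_\beta$ is surjective by definition of both groups as jet groups of the same $G^r_\beta$. With that observation in place, both inductions run without further work.
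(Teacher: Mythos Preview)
Your proposal is correct and follows exactly the approach indicated in the paper, which simply states that the corollary is ``proved by induction'' from Theorem~\ref{th 1} and Theorem~\ref{recurrence pour les structures}. Your write-up is more explicit than the paper's one-line justification---in particular you spell out the second induction for uniqueness and the surjectivity of $\rho_{k,k-1}\vert_{G^k_\beta}$ needed to ensure projections of structures are structures---but the underlying argument is identical.
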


\subsection{Hopf surfaces}\label{cas surfaces}

In this subsection, we focus on Hopf surfaces. What fails in the proof of Theorem \ref{recurrence pour les structures}, is the moment when we use Mall's Theorem giving the equality
$$
h^0(M,B)=h^1(M,B).
$$
Notice that the proof of the fact that $h^0(M,B)=0$ is still valid. Therefore, if we can extend a structure of order $k$ at the order $k+1$, this extension is unique. But the problem is that, \textit{a priori}, the space $H^1(M,B)$ is not trivial.

In their paper \cite{OV2024}, Ornea and Verbitsky have the same problem. As they mention, Theorem 6.6 is still true. We will explain why.

\subsubsection{Mall theorem for Hopf surfaces}

In \cite{Ma96}, Mall shows a theorem that allows to compute the cohomology of any holomorphic vector bundle over a Hopf manifold of dimension $n\geq3$ with a trivial pullback on $\mathbb C^n\setminus\{0\}$. Let us state his Theorem.

\begin{Theorem}\label{Mall theorem}
    (\cite{Ma96}, Theorem 1.1) Let $B$ be a Mall bundle on $M$ a Hopf manifold of dimension $n\geq3$. Then we have 
    $$
   \left\{
   \begin{matrix}
       h^0(M,B)&=&h^1(M,B) & \\
       h^p(M,B)&=&0 & \text{for}~2\leq p \leq n-2 \\
       h^{n-1}(M,B)&=&h^n(M,B) & 
   \end{matrix}
   \right.
    $$
\end{Theorem}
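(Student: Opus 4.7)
The plan is to relate the cohomology of $B$ on $M$ to the cohomology of $\pi^{*}B$ on the universal cover $\tilde{M} = \mathbb{C}^{n}\setminus\{0\}$, then to exploit (i) the vanishing of $H^{p}(\mathbb{C}^{n}\setminus\{0\}, \mathcal{O})$ in the middle range, (ii) Mall's compactness result for $\gamma^{*}$ on bounded sections near $0$, and (iii) Serre duality for the top-degree equality. Concretely, since $M = \tilde{M}/\langle\gamma\rangle$ is a free $\mathbb{Z}$-quotient and $\mathbb{Z}$ has cohomological dimension one, the Cartan-Leray spectral sequence degenerates into the long exact sequence
\begin{equation*}
\cdots \to H^{p}(M,B) \to H^{p}(\tilde{M}, \pi^{*}B) \xrightarrow{\gamma^{*}-\mathrm{id}} H^{p}(\tilde{M}, \pi^{*}B) \to H^{p+1}(M,B) \to \cdots
\end{equation*}
Since $B$ is a Mall bundle, $\pi^{*}B$ is holomorphically trivial, so $H^{p}(\tilde{M}, \pi^{*}B) \cong H^{p}(\mathbb{C}^{n}\setminus\{0\}, \mathcal{O})^{\oplus \rk B}$, with a twisted $\mathbb{Z}$-action.

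For the middle range, I would use the classical computation (Hartogs phenomenon, or \cite{Fr57}) that $H^{p}(\mathbb{C}^{n}\setminus\{0\}, \mathcal{O}) = 0$ for $1 \leq p \leq n-2$ when $n \geq 3$. Plugging into the long exact sequence, for every $p$ with $2 \leq p \leq n-2$ both surrounding terms $H^{p-1}(\tilde{M}, \pi^{*}B)$ and $H^{p}(\tilde{M}, \pi^{*}B)$ vanish, yielding $H^{p}(M,B) = 0$ immediately. For $p = 0$, using $H^{1}(\tilde{M}, \pi^{*}B) = 0$, the long exact sequence collapses to
\begin{equation*}
0 \to H^{0}(M,B) \to H^{0}(\tilde{M}, \pi^{*}B) \xrightarrow{\gamma^{*}-\mathrm{id}} H^{0}(\tilde{M}, \pi^{*}B) \to H^{1}(M,B) \to 0,
\end{equation*}
so the equality $h^{0}(M,B) = h^{1}(M,B)$ is equivalent to $\gamma^{*} - \mathrm{id}$ being Fredholm of index zero on $H^{0}(\tilde{M}, \pi^{*}B) = \mathcal{O}(\mathbb{C}^{n})^{\oplus \rk B}$ (using Hartogs to extend sections across $0$).

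To obtain the Fredholm property, I would argue exactly as in the proof of Theorem \ref{th 1}: pick a relatively compact neighborhood $D$ of $0$ with $\gamma(\overline{D}) \subset D$, and consider the Banach space $H^{0}_{b}(D, \pi^{*}B)$ of bounded holomorphic sections. By the Montel-type argument of Mall (\cite{Ma96}, Proposition 4.1), $\gamma^{*}$ acts compactly on this Banach space; hence $\gamma^{*} - \mathrm{id}$ is Fredholm of index $0$ there by Riesz-Schauder. The contraction property gives $\bigcup_{k\geq 0}\gamma^{-k}(D) = \mathbb{C}^{n}\setminus\{0\}$, so restriction identifies $\ker(\gamma^{*}-\mathrm{id})$ on global sections with $\ker(\gamma^{*}-\mathrm{id})$ on $H^{0}_{b}(D, \pi^{*}B)$, and a similar comparison is made for cokernels; these identifications propagate the Fredholm index-zero property to global sections and give $h^{0}(M,B) = h^{1}(M,B)$.

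Finally, for the top-degree equality $h^{n-1}(M,B) = h^{n}(M,B)$, I would invoke Serre duality on the compact complex manifold $M$: $H^{p}(M, B) \cong H^{n-p}(M, B^{*} \otimes K_{M})^{*}$. Since $\pi^{*}K_{M} = K_{\tilde{M}}$ is the restriction of the trivial canonical bundle of $\mathbb{C}^{n}$, it is trivial, so $K_{M}$ is itself a Mall bundle and therefore so is $B^{*} \otimes K_{M}$. Applying the already-established equality $h^{0} = h^{1}$ to this new Mall bundle gives $h^{n}(M,B) = h^{n-1}(M,B)$. The main obstacle in this plan is the Fredholm step: the space of global sections $\mathcal{O}(\mathbb{C}^{n})^{\oplus \rk B}$ carries no Banach topology on which $\gamma^{*}$ is compact, so one must carefully compare the kernel and cokernel of $\gamma^{*} - \mathrm{id}$ with those of the operator acting on the Banach space $H^{0}_{b}(D, \pi^{*}B)$ of local bounded sections, using the dynamics of the contraction to bridge the two.
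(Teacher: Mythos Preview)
The paper does not give its own proof of this theorem; it is quoted from \cite{Ma96} and used as a black box. Your outline is essentially Mall's original argument, and it is consistent with the ingredients the paper itself cites: the generalized Douady long exact sequence (which the paper writes out explicitly in the surface case just below the statement) and Mall's Corollary 4.2 asserting that $A-\gamma^{*}$ is Fredholm of index $0$ on $H^{0}(W,\mathcal{O}^{r})$. Your use of the cohomology vanishing $H^{p}(\mathbb{C}^{n}\setminus\{0\},\mathcal{O})=0$ for $1\le p\le n-2$ and of Serre duality with the Mall bundle $B^{*}\otimes K_{M}$ for the top-degree equality are exactly the standard steps. The one point you flag as an obstacle---transferring the Fredholm index-zero property from the Banach space $H^{0}_{b}(D,\pi^{*}B)$ to the Fr\'echet space of global sections---is indeed the only place requiring care, and it is precisely what Mall's Corollary 4.2 handles; the paper takes this as given.
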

We want to give a version of this theorem for Hopf surfaces. As mentioned by Mall, by using the argument in the proof of Theorem \ref{Mall theorem}, we only see that the Euler characteristic vanishes. The same result can be achieved by adding a hypothesis for Hopf surfaces. 

Let $M=W/<\gamma>$ be a Hopf surface, where $W:=\mathbb C^2\setminus \{0\}$ and $\gamma:\mathbb C^2\to \mathbb C^2$ is a biholomorphic contraction in $0$. We denote by $\pi:W\to M$ the projection. 

Let $B$ be a Mall bundle of rank $r$ on $M$. Since $\pi^*(B)$ is trivial, there exists a group representation 
$$
\begin{matrix}
    \rho_B & : & <\gamma> & \longrightarrow & GL(r,\Gamma(W,\mathcal O))\\
           &   & \gamma & \longmapsto & A 
\end{matrix}
$$
such that the bundle $B$ is isomorphic to the quotient $E_A$ of the trivial bundle $W\times \mathbb C^r$ by the action 
$$
\begin{matrix}
    <\gamma> \times (W\times \mathbb C^r) & \longrightarrow & (W\times \mathbb C^r) \\
    (\gamma,(z,v)) & \longmapsto & (\gamma(z), A(z)v).
\end{matrix}
$$

Mall uses the generalized Douady sequence (see \cite{Dou60}, p.5). For Hopf surfaces, this exact sequence reads 

$$
\begin{tikzcd}
    0 \arrow[r] & H^0(M,E_A) \arrow[r,"\pi^*"] & H^0(W,\mathcal O^r) \arrow[r,"A-\gamma^*"] & H^0(W,\mathcal O^r) \arrow[r] & H^1(M,E_A) \\ \arrow[r,"\pi^*"] & H^1(W,\mathcal O^r) \arrow[r,"A-\gamma^*"] & H^1(W,\mathcal O^r) \arrow[r] & H^2(M,E_A) \arrow[r] & 0.
\end{tikzcd}
$$
From this exact sequence and with the work of Mall we infer the following result.

\begin{Theorem}
\label{Mall theorem for surfaces}
    Let $E_A$ be a Mall bundle on $M$. 
    
    If the map 
    $$
    \begin{matrix}
        A-\gamma^* & : & H^1(W,\mathcal O^r) & \longrightarrow & H^1(W,\mathcal O^r)
    \end{matrix}
    $$
    is one-to-one, then we have $h^0(M,E_A)=h^1(M,E_A)$.
    
    If this map is onto, then we have $H^2(M,E_A)=0$.
\end{Theorem}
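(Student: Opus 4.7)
The plan is to read off both statements directly from the Douady long exact sequence displayed before the theorem, combined with the analytic core that underlies Mall's proof of Theorem \ref{Mall theorem}. Part $(2)$ is the easy half: the tail of the Douady sequence,
$$
H^1(W,\mathcal O^r)\xrightarrow{A-\gamma^*} H^1(W,\mathcal O^r)\longrightarrow H^2(M,E_A)\longrightarrow 0,
$$
forces $H^2(M,E_A)=0$ as soon as the left arrow is onto, so no further work is needed in that case.

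For part $(1)$, I would first extract the two identifications hidden in the long exact sequence. The left end yields
$$
H^0(M,E_A)\cong \ker\bigl(A-\gamma^*\colon H^0(W,\mathcal O^r)\to H^0(W,\mathcal O^r)\bigr),
$$
while the middle section produces a short exact sequence
$$
0\to \coker\bigl(A-\gamma^*\colon H^0(W,\mathcal O^r)\to H^0(W,\mathcal O^r)\bigr)\to H^1(M,E_A)\to \ker\bigl(A-\gamma^*\colon H^1(W,\mathcal O^r)\to H^1(W,\mathcal O^r)\bigr)\to 0.
$$
The injectivity hypothesis collapses the right-hand kernel and gives $H^1(M,E_A)\cong \coker(A-\gamma^*\colon H^0(W,\mathcal O^r)\to H^0(W,\mathcal O^r))$. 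The theorem thus reduces to showing that the operator $A-\gamma^*$ on $H^0(W,\mathcal O^r)$ has Fredholm index zero.

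I would produce this Fredholm statement by transposing the analytic strategy used in the proof of Theorem \ref{th 1}. Fix a bounded domain $D\Subset \mathbb C^2$ with $\gamma(D)\Subset D$, and replace $H^0(W,\mathcal O^r)=H^0(\mathbb C^2,\mathcal O^r)$ (identified via Hartogs) by the Banach space $H^0_b(D,\mathcal O^r)$ of bounded holomorphic sections on $D$. By the argument of \cite{Ma96}, Proposition 4.1 already used in the proof of Theorem \ref{th 1}, the pullback $\gamma^*$ is compact on this Banach space, while $A$, a holomorphic matrix-valued map with nowhere vanishing determinant, acts as a bounded invertible multiplication operator on $H^0_b(D,\mathcal O^r)$. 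The factorisation
$$
A-\gamma^*=A\circ(\mathrm{Id}-A^{-1}\gamma^*),
$$
in which $A^{-1}\gamma^*$ is compact, exhibits $A-\gamma^*$ as the composition of a Banach-space isomorphism with an identity-minus-compact operator, and the Riesz-Schauder theorem yields Fredholm index zero, i.e.\ the equality $\dim\ker=\dim\coker$.

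The main subtlety I expect is the transfer of this Fredholm equality from $H^0_b(D,\mathcal O^r)$ back to the coherent cohomology space $H^0(W,\mathcal O^r)$. The kernel side is handled by the identity principle, while the cokernel side requires the global extension argument of \cite{Ma96}, where one iteratively propagates sections from $D$ outward along $\gamma^{-k}(D)$ to produce global representatives of every class. What is crucial is that this transfer only involves $H^0$: the pathological feature $H^1(\mathbb C^2\setminus\{0\},\mathcal O)\neq 0$, which obstructs the direct extension of Mall's theorem to surfaces, has already been neutralised by the hypothesis placed on the $H^1$ piece of the operator.
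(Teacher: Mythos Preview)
Your proposal is correct and follows essentially the same route as the paper: both parts are read off from the Douady long exact sequence, with the first reducing to the fact that $A-\gamma^*$ on $H^0(W,\mathcal O^r)$ is Fredholm of index zero. The paper simply invokes \cite{Ma96}, Corollary 4.2 for this Fredholm statement, while you unpack its proof (compactness of $\gamma^*$, factorisation through an invertible multiplication operator, Riesz--Schauder, and transfer from $H^0_b(D,\mathcal O^r)$ to $H^0(W,\mathcal O^r)$); the underlying argument is the same.
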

\begin{proof}
    The second assertion is straightforward. The first assertion comes from the Corollary 4.2 in \cite{Ma96}  ensuring that 
    $$
    \begin{matrix}
        A-\gamma^* & : & H^0(W,\mathcal O^r) & \longrightarrow & H^0(W,\mathcal O^r)
    \end{matrix}
    $$
    is a Fredholm operator of index $0$, even for surfaces.
\end{proof}

\subsubsection{Ornea and Verbitsky theorem for Hopf surfaces}

In \cite{OV2024}, Ornea and Verbitsky proved their Theorem 6.6 for Hopf manifolds of dimension at least $3$ and mentioned that the result was true for surfaces. We give here the proof for Hopf surfaces because we judge it interesting in order to understand what will come next in the resonant case. 

Let $\gamma$ be a non-resonant contraction of $\mathbb C^2$ at $0$. In this section exceptionally, non-resonant means that the maximal length of resonance is $1$. To be more specific, the linear map $d_0\gamma$ can have one eigenvalue of multiplicity $2$. Here, this is the definition of ``non-resonant" that Ornea and Verbitsky use in \cite{OV2024}. Let $M$ be the corresponding Hopf manifold. 

\begin{Theorem}\label{OV surfaces}
    (\cite{OV2024}, p.26, Theorem 6.6) The Hopf surface $M$ carries a unique flat torsion-free holomorphic connection. Moreover, there exists a biholomorphism $\varphi:\mathbb C^2\to\mathbb C^2$
such that $\varphi\gamma\varphi^{-1}$ is linear.
\end{Theorem}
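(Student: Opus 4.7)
The plan is to follow the cohomological strategy from \cite{OV2024}, substituting the surface-adapted Mall theorem (Theorem \ref{Mall theorem for surfaces}) for the original one. A torsion-free holomorphic affine connection on $M$ is a global section of a natural affine bundle modelled on $B := \mathrm{Sym}^2(T^*M) \otimes TM$, so existence and uniqueness reduce respectively to $H^1(M,B) = 0$ and $H^0(M,B) = 0$. Since $TM$ is trivialised on $W := \mathbb{C}^2 \setminus \{0\}$ by the standard coordinate frame, $B$ is a Mall bundle and Theorem \ref{Mall theorem for surfaces} is available.

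First I would prove $H^0(M,B) = 0$ by the leading-degree argument already used in the proofs of Theorems \ref{th 1} and \ref{recurrence pour les structures}. A global section of $B$ lifts to a $\gamma$-invariant holomorphic section on $W$ and extends across $0$ by Hartogs. Expanding in the standard frame at $0$ and looking at the lowest non-zero homogeneous degree $\nu_0$, the invariance equation forces a multiplicative relation $\beta_i \beta_j \beta^p = \beta_k$ with $|p| = \nu_0$, i.e.\ a resonance of length $\nu_0 + 2 \geq 2$. This is excluded by the hypothesis that the maximal resonance length is $1$.

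The critical second step is to verify the hypothesis of Theorem \ref{Mall theorem for surfaces}: injectivity of $A - \gamma^*$ on $H^1(W, \mathcal{O}^r)$, where $r$ is the rank of $B$ and $A$ is its cocycle in the standard trivialisation. Since $H^1(W, \mathcal{O})$ is the space of convergent Laurent series $\sum_{p \in (-\mathbb{N}^*)^2} a_p z^p$, an element of the kernel can be analysed by the same leading-exponent technique, this time applied to the highest (i.e., least negative) exponent in the Laurent expansion. The resulting eigenvalue equation again produces a non-trivial multiplicative relation among the $\beta_\ell$'s of length at least $2$, ruled out by non-resonance. Combined with the vanishing of $H^0$, Theorem \ref{Mall theorem for surfaces} then yields $H^1(M,B) = 0$, giving existence and uniqueness of the torsion-free affine connection.

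Flatness follows by applying the same vanishing to $H^0(M, \Lambda^2 T^*M \otimes \mathrm{End}(TM))$ where the curvature tensor lives: the leading-degree argument again requires a length-$\geq 2$ resonance, impossible under the hypothesis. For the linearisation: the pullback of the connection to $W$ extends across $0$ to a holomorphic, flat, torsion-free connection on $\mathbb{C}^2$ (by Hartogs), and its exponential chart at $0$ is a biholomorphism $\varphi : \mathbb{C}^2 \to \mathbb{C}^2$ that straightens geodesics through $0$; since $\gamma$ fixes $0$ and preserves the connection, $\varphi \gamma \varphi^{-1}$ preserves all lines through $0$, hence is linear (this final step is in the spirit of \cite{Mc2016}, Lemma~14.2). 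The main obstacle is the second step: controlling the kernel of $A - \gamma^*$ on the infinite-dimensional Laurent-series cohomology $H^1(W, \mathcal{O}^r)$ is where the dimension-two pathology of Mall's theorem really enters, and the non-resonance hypothesis has to be exploited in a subtler way than for $H^0$.
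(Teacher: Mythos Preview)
Your proposal is correct and follows essentially the same route as the paper: both reduce the problem to verifying the injectivity hypothesis of Theorem \ref{Mall theorem for surfaces} for the map $A-\gamma^*$ on $H^1(W,\mathcal O^r)$, and both check it by a leading-exponent argument on the Laurent-series description of $H^1(\mathbb C^2\setminus\{0\},\mathcal O)$, producing a multiplicative relation among the $\alpha_i$ that contradicts non-resonance. The only difference is cosmetic: the paper works with the full bundle $TM^*\otimes TM^*\otimes TM$ rather than your symmetric summand and, once $h^0=h^1$ is established, simply defers the remaining steps (vanishing of $H^0$, flatness, linearisation via the exponential chart) to \cite{OV2024}, whereas you sketch them explicitly.
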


\begin{proof}
    The proof is the same as the one of Ornea and Verbitsky, except that we have to show that $h^0(M,TM^*\otimes TM^*\otimes TM)=h^1(M,TM^*\otimes TM^*\otimes TM)$. To do that, according to Theorem \ref{Mall theorem for surfaces}, we need to show that the map 
    $$
    \begin{matrix}
        H^1(\mathbb C^2\setminus \{0\},(\mathcal O^2)^*\otimes(\mathcal O^2)^*\otimes\mathcal O^{2}) & \longrightarrow & H^1(\mathbb C^2\setminus \{0\},(\mathcal O^2)^*\otimes(\mathcal O^2)^*\otimes\mathcal O^{2})
    \end{matrix}
    $$
    taking a convergent Laurent series on $(\mathbb C^*)^2$
    $$
    s(z)=\sum\limits_{j,k=1}^2\sum\limits_{l=1}^2\sum\limits_{p\in (-\mathbb N^*)^2}a_{j,l,m,p}z^pdz_j\otimes dz_k \otimes \frac{\partial}{\partial z_l}
    $$
    to 
    $$
    \sum\limits_{j,k=1}^2\sum\limits_{l=1}^2\sum\limits_{p\in (-\mathbb N^*)^2}a_{j,l,m,p}z^pdz_j\circ (d_z\gamma)^{-1}\otimes dz_k \circ (d_z\gamma)^{-1}  \otimes d_z\gamma_*\frac{\partial}{\partial z_l}-s(\gamma(z))
    $$
    is one-to-one. 

    Let $s$ be such a convergent power series satisfying that its image is $0$. Then assuming that $s$ is nonzero, and looking at the lowest nonzero term in the Taylor expansion, we get a relation of the form 
    $$
    \alpha_1^j\alpha_2^{2-j}=\alpha_l\alpha_1^{-p_1}\alpha_2^{-p_2}
    $$
    where $\alpha_1,\alpha_2$ are the eigenvalues of $d_0\gamma$, $j$ is an element in $\{0,1,2\}$, $l$ is an element in $\{1,2\}$ and $p=(p_1,p_2)$ is an element of $(-\mathbb N^*)^2$. This relation automatically gives rise to a resonance of length greater than $1$, which contradicts our assumptions. Therefore, the following equality holds
    $$
    h^0(M,TM^*\otimes TM^*\otimes TM)=h^1(M,TM^*\otimes TM^*\otimes TM).
    $$
    The rest of the proof is exactly the same as in \cite{OV2024}.
\end{proof}

\subsubsection{G-structures on resonant Hopf surfaces}\label{geo str on surf}

We could try to prove Theorem \ref{recurrence pour les structures} the same way for Hopf surfaces. Nevertheless, once we reach the computation of 
$$
H^1(M,B)
$$
where $B$ has typical fiber isomorphic to $V^{k+1}/V^{k+1}_\beta$, the computation of the map 
$$
H^1(\mathbb C^2\setminus\{0\},\pi^*B)\longrightarrow H^1(\mathbb C^2\setminus\{0\},\pi^*B)
$$
assuming that it is not one-to-one produces relations of the form
$$
\alpha_1^j\alpha_2^{k+1-j}=\alpha_l\alpha_1^{-p_1}\alpha_2^{-p_2}
$$
and, as soon as $k\geq2$, the existence of this type of relation is not a contradiction.
Therefore, the space $H^1(M,B)$ is not trivial, which means that the existence of an extension of our geometric structure is a class in $H^1(M,B)$ which is, \textit{a priori}, non trivial.

Let $\gamma:\mathbb C^2\to \mathbb C^2$ be a holomorphic contraction at $0$, and let $M$ be the corresponding Hopf manifold. Let $\alpha_1,\alpha_2$ be the eigenvalues of $d_0\gamma$, verifying 
$$
1>\vert \alpha_1  \vert \geq \vert \alpha_2 \vert >0.
$$
According to Theorem \ref{th 1}, there exists a $<\gamma>$-invariant $G^1_\alpha$-structure of order $1$ on $\mathbb C^2$. A consequence of the existence of this structure is that there exists a holomorphic distribution of lines in $T\mathbb C^2$ invariant under the action of $<\gamma>$.
\begin{Lemma}\label{forme globale et fermee}
    The $<\gamma>$-invariant distribution is given by the kernel of a global holomorphic nowhere vanishing $1$-form $\omega$ on $\mathbb C^2$ which verifies the following properties:
    \begin{enumerate}
        \item the form $\omega$ is closed,
        \item the following equation holds: $\gamma^*\omega=\alpha_1\omega$.
    \end{enumerate}
\end{Lemma}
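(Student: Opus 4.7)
The plan is to extract the $1$-form $\omega$ directly from the base case of the induction in the proof of Theorem \ref{th 1} (which automatically provides property (2) and the non-vanishing), and then to prove closedness by a leading-order Taylor analysis at $0$, in the same spirit as the spectral arguments of Section \ref{geo str ord 1}.

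\emph{Existence of $\omega$.} The base step in the proof of Theorem \ref{th 1} produces, on a $\gamma$-stable neighborhood $D$ of $0$, a bounded holomorphic $1$-form $\omega_1$ with $\gamma^*\omega_1=\alpha_1\omega_1$ and $\omega_1(0)\neq 0$; its kernel is the $\gamma$-invariant line distribution (equal to $\mathcal H_1$ when the two eigenvalues are distinct, or to the step $H_{1,1}$ of the flag in the multiplicity-$2$ case). I set $\omega:=\omega_1$ and propagate it to all of $\mathbb C^2$ by iterating $\omega_z=\alpha_1^{-k}\,\omega_{\gamma^k(z)}\circ d_z\gamma^k$: any $z\in\mathbb C^2$ is sent into $D$ by a sufficient power of the contraction, and compatibility on overlaps is precisely the equation $\gamma^*\omega=\alpha_1\omega$ already holding on $D$. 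Because $d_z\gamma^k$ is invertible and $\omega$ is nonzero on a neighborhood of $0$ by continuity, this same formula shows that $\omega$ is nowhere vanishing. This settles (2).

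\emph{Closedness.} Trivialising $\Omega^2_{\mathbb C^2}$ by $dz_1\wedge dz_2$, write $d\omega=f\,dz_1\wedge dz_2$ with $f\in\mathcal O(\mathbb C^2)$. Applying $d$ to $\gamma^*\omega=\alpha_1\omega$ yields $\gamma^*(d\omega)=\alpha_1\,d\omega$, i.e.
$$
(f\circ\gamma)\,\det J\gamma=\alpha_1 f.
$$
Suppose $f\not\equiv 0$, and let $f_{k_0}$ denote its lowest-degree homogeneous Taylor component at $0$. Reading the identity above at order $k_0$ and using $\det J_0\gamma=\alpha_1\alpha_2$ forces
$$
f_{k_0}\circ u_0=\alpha_2^{-1}f_{k_0},
$$
so $\alpha_2^{-1}$ is an eigenvalue of $P\mapsto P\circ u_0$ acting on degree-$k_0$ homogeneous polynomials. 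The spectrum of this endomorphism consists of products $\alpha_1^{p_1}\alpha_2^{p_2}$ with $p_1+p_2=k_0$. For $k_0=0$ this would force $\alpha_2^{-1}=1$, impossible since $|\alpha_2|<1$; for $k_0\geq 1$ each such eigenvalue has modulus bounded by $|\alpha_1|^{k_0}<1$, whereas $|\alpha_2^{-1}|>1$. This contradiction forces $f\equiv 0$, so $\omega$ is closed.

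The main point, which mirrors the spectral technique already deployed throughout Section \ref{geo str ord 1}, is that a nonzero $f$ would produce a resonance-like relation expressing $\alpha_2^{-1}$ as a product of the $\alpha_i$, which is ruled out by the contraction hypothesis $|\alpha_i|<1$; once this is in place, everything else in the proof is formal.
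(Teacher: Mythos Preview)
Your proof is correct and follows essentially the same approach as the paper: you extract the eigenform $\omega_1$ from the base step of Theorem \ref{th 1}, propagate it to all of $\mathbb C^2$ by the rescaled pullback formula $\omega_z=\alpha_1^{-k}\omega_{\gamma^k(z)}\circ d_z\gamma^k$ (exactly as the paper does with its family $(\omega_n)$), and then kill $d\omega$ by a lowest-order Taylor argument exploiting $|\alpha_i|<1$. The only cosmetic difference is that you trivialise $\Omega^2_{\mathbb C^2}$ by $dz_1\wedge dz_2$ and work with the scalar relation $(f\circ\gamma)\det J\gamma=\alpha_1 f$, whereas the paper argues directly with the $2$-form and phrases the contradiction as a forbidden resonance $\alpha_1=\alpha_j\alpha_k\alpha^p$; these are the same computation.
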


\begin{proof}
    To prove this lemma, we need to go back to the proof of Theorem \ref{th 1}. According to that proof, there exist a bounded open neighborhood $D$ of $0\in\mathbb C^2$ verifying that $\gamma(D)$ is relatively compact in $D$, and a non-vanishing holomorphic one form $\omega_0$ on $D$ satisfying the equation $\gamma^*\omega_0=\alpha_1\omega_0$.  The distribution is given locally by $\ker(\omega_0)$, and it extends to $\mathbb C^2$ because $\gamma$ is a contraction.

    Now, using again the fact that $\gamma$ is a contraction, we have the following equality:
    $$
    \mathbb C^2=\bigcup\limits_{n\in\mathbb N} \gamma^{-n}(D).
    $$
    For every $n\in\mathbb N$, we define a holomorphic $1$-form on $\gamma^{-n}(D)$ by the following formula:
    $$
    (\omega_n)_z=\frac{1}{\alpha_1^n}(\omega_0)_{\gamma^n(z)}\circ d_z\gamma^n.
    $$
    Then the family of nowhere vanishing  holomorphic one forms $(\omega_n)_{n\in\mathbb N}$ verifies that 
    $$
    \left\{
    \begin{matrix}
        \omega_{n+1}\vert_{\gamma^{-n}(D)}=\omega_n \\
        \gamma^*\omega_n=\alpha_1\omega_{n+1}
    \end{matrix}
    \right.
    $$
    Therefore, all those forms patch together to produce a global nowhere vanishing holomorphic one form $\omega$ on $\mathbb C^2$ verifying $\gamma^*\omega=\alpha_1\omega$.

    To prove that $\omega$ is closed, we consider its exterior derivative $\mathrm d\omega$. This is a holomorphic two form on $\mathbb C^2$ verifying $\gamma^*\mathrm d\omega=\alpha_1\mathrm d\omega$. We assume that $\mathrm d\omega$ is nonzero and we consider its power series expansion. Then, the equation $\gamma^*\mathrm d\omega=\alpha_1\mathrm d\omega$ implies, looking at the lowest nonzero order of the Taylor expansion, a relation of the type
    $$
    \alpha_1=\alpha_j\alpha_k\alpha^p
    $$
    where $j,k$ are elements of $\{1,2\}$ and $p$ is an element of $\mathbb N^2$. But the existence of such a relation is a contradiction because of the property
    $$
    1>\vert \alpha_1  \vert \geq \vert \alpha_2 \vert >0.
    $$
    Therefore, the form $\mathrm d\omega$ is the zero form which implies that $\omega$ is closed.
\end{proof}

Now, there are essentially three cases in our situation. 

The first one is the non-resonant case (see Definition \ref{def reson}). In this situation, the proofs of Theorem \ref{th 1} and Lemma \ref{forme globale et fermee} show that there exists another global holomorphic closed one form $\eta$ on $\mathbb C^2$ satisfying $\gamma^*\eta=\alpha_2\eta$. Moreover, $\omega$ and $\eta$ are pointwise linearly independent. Since $\mathbb C^2$ is simply connected, we consider the holomorphic functions $w_1$ and $w_2$ on $\mathbb C^2$, vanishing at $0$, and such that $\omega=\mathrm dw_1$ and $\eta=\mathrm dw_2$. Then $\varphi=(w_1,w_2)$ is a local biholomorphism, and it verifies that 
$$
\varphi \gamma \varphi^{-1}=
\begin{pmatrix}
    \alpha_1 & 0 \\
    0 & \alpha_2
\end{pmatrix}.
$$
In other words, $\omega$ and $\eta$ define a flat torsion-free affine connection  on $\mathbb C^2$ invariant by $<\gamma>$, and $\varphi$ is the developing map of this affine connection. This gives a new proof of Theorem 6.6 in \cite{OV2024}, p.27, in our non-resonant case.

The second one is the case where $\alpha_1=\alpha_2$. In this case, according to the proof of Theorem \ref{th 1}, there exists a nowhere vanishing holomorphic one form $\eta_0$ on $D$ satisfying $\gamma^*\eta_0=\alpha_1\eta_0+\varepsilon\omega_0$, where $\varepsilon$ is $0$ or $1$, and such that $\omega_0$ and $\eta_0$ are pointwise linearly independent. 

\begin{Lemma}\label{eta closed}
    The form $\eta_0$ is closed.
\end{Lemma}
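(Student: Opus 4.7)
The plan is to mimic exactly the argument used at the end of Lemma \ref{forme globale et fermee} to show that $\omega$ is closed, replacing $\omega$ by $\eta_0$ and adjusting for the extra term $\varepsilon\omega_0$. First, I would compute $\gamma^*\mathrm d\eta_0$ by commuting $\mathrm d$ with the pullback:
$$
\gamma^*\mathrm d\eta_0 \;=\; \mathrm d(\gamma^*\eta_0) \;=\; \mathrm d(\alpha_1\eta_0+\varepsilon\omega_0) \;=\; \alpha_1\,\mathrm d\eta_0+\varepsilon\,\mathrm d\omega_0 \;=\; \alpha_1\,\mathrm d\eta_0,
$$
where the last equality uses the fact, established in Lemma \ref{forme globale et fermee}, that $\mathrm d\omega_0=0$. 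Thus $\mathrm d\eta_0$ is a holomorphic $2$-form on $D$ satisfying exactly the same functional equation $\gamma^*\xi=\alpha_1\xi$ as the form treated previously.

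Next I would assume for contradiction that $\mathrm d\eta_0$ is not identically zero and look at the lowest nonzero homogeneous component of its Taylor expansion at $0$. In complex dimension two, a holomorphic $2$-form is of the shape $f(z)\,dz_1\wedge dz_2$, and for the linear part $u_0$ of $\gamma$ (with eigenvalues $\alpha_1=\alpha_2$) the eigenvalues of $u_0^*$ on the space of homogeneous degree $N$ coefficients $z^p\,dz_1\wedge dz_2$ are of the form $\alpha^p\,\alpha_1\alpha_2$. Matching the lowest-order term of $u_0^*(\mathrm d\eta_0)=\alpha_1\,\mathrm d\eta_0$ therefore forces a relation
$$
\alpha_1 \;=\; \alpha_j\,\alpha_k\,\alpha^p, \qquad j,k\in\{1,2\},\;\; p\in\mathbb N^2,
$$
that is, $|\alpha_1|=|\alpha_1|^{|p|+2}$. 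Since $0<|\alpha_1|<1$ and $|p|+2\geq 2$, this is impossible, yielding the contradiction.

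Consequently the Taylor expansion of $\mathrm d\eta_0$ at $0$ vanishes identically, and by analytic continuation on the connected open set $D$ we conclude $\mathrm d\eta_0\equiv 0$. I do not expect a serious obstacle: the only difference with Lemma \ref{forme globale et fermee} is the presence of the inhomogeneous term $\varepsilon\omega_0$, which is absorbed precisely because $\omega_0$ is already known to be closed, and the eigenvalue argument is unchanged thanks to the bound $|\alpha_1|<1$.
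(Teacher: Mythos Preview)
Your argument is correct and follows exactly the paper's approach: from $\gamma^*\eta_0=\alpha_1\eta_0+\varepsilon\omega_0$ and $\mathrm d\omega_0=0$ you deduce $\gamma^*\mathrm d\eta_0=\alpha_1\mathrm d\eta_0$, and then the same Taylor-expansion eigenvalue argument as in Lemma~\ref{forme globale et fermee} forces $\mathrm d\eta_0=0$. The only difference is that you spell out the eigenvalue computation in more detail than the paper, which simply refers back to the previous lemma.
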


\begin{proof}
    Since we have the equality $\gamma^*\eta_0=\alpha_1\eta_0+\varepsilon\omega_0$, and since $\omega_0$ is closed by Lemma \ref{forme globale et fermee} we infer that the exterior differential of $\eta_0$ satisfies the equation $\gamma^*\mathrm d\eta_0=\alpha_1\mathrm d\eta_0$. But then, by the same Taylor expansion argument as the one in the proof of Lemma \ref{forme globale et fermee}, the $2$-form $\mathrm d\eta_0$ is identically $0$. Therefore, $\eta_0$ is closed.
\end{proof}

As in the non resonant case, we integrate $\omega$ and $\eta_0$ in a neighborhood of $0\in\mathbb C^2$ and we produce local coordinates $\varphi$ such that 
$$
\varphi \gamma \varphi^{-1}=
\begin{pmatrix}
    \alpha_1 & 0 \\
    \varepsilon & \alpha_1
\end{pmatrix}.
$$
This completes the new proof of Theorem 6.6 in \cite{OV2024}, p.27 in the non resonant case.

Let us now deal with the third case, which is the remaining resonant case, namely, where there exists an integer $r\geq2$ such that $\alpha_2=\alpha_1^r$. In this case, we will denote by $\lambda$ the eigenvalue $\alpha_1$, so that $\alpha_2=\lambda^r$. 

\begin{Lemma}\label{vector field}
    The distribution $\ker(\omega)$ is also given by a non-vanishing global holomorphic vector field $X$ on $\mathbb C^2$ satisfying the equation $\gamma_*X=\lambda^rX$.
\end{Lemma}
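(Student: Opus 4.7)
The strategy dualizes the proof of Theorem \ref{th 1}: rather than producing an invariant $1$-form, we produce an invariant vector field via a Riesz--Schauder argument applied to bounded holomorphic sections of the line subbundle $\ker(\omega)\subset TD$, where $D$ is a bounded open neighborhood of $0$ with $\gamma(D)\Subset D$. Since $\gamma^*\omega=\lambda\omega$, the map $\gamma$ preserves $\ker(\omega)$, so the pullback operator
\begin{equation*}
    \gamma^{\#}:H^0_b(D,\ker(\omega))\longrightarrow H^0_b(D,\ker(\omega)),\qquad X\longmapsto (d\gamma)^{-1}(X\circ\gamma)
\end{equation*}
is well-defined and compact (same Montel-type argument as in \cite{Ma96}, Proposition 4.1). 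To show that $\lambda^{-r}$ lies in the spectrum of $\gamma^{\#}$, observe that $d_0\gamma$ acts on the one-dimensional space $\ker(\omega)_0$ by multiplication by $\alpha_2=\lambda^r$, so evaluation at $0$ shows that every element of $\mathrm{Im}(\gamma^{\#}-\lambda^{-r}\mathrm{Id})$ vanishes at $0$. However, $\ker(\omega)$ is a holomorphic line bundle over the contractible Stein manifold $\mathbb C^2$, hence trivial by the Oka--Grauert principle, and its restriction to $D$ therefore admits a section with nonzero value at $0$, which cannot lie in the image. Riesz--Schauder then yields a nonzero $X_0\in H^0_b(D,\ker(\omega))$ with $\gamma^{\#}X_0=\lambda^{-r}X_0$, equivalently $d_z\gamma(X_0(z))=\lambda^r X_0(\gamma(z))$ on $D$.

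The key technical step is to prove that $X_0$ does not vanish at $0$. Fix a nowhere vanishing global holomorphic section $s$ of $\ker(\omega)$ and write $X_0=f\cdot s$ on $D$, together with $d\gamma(s)=g\cdot(s\circ\gamma)$; since $d_0\gamma$ restricted to $\ker(\omega)_0$ is multiplication by $\lambda^r$, we have $g(0)=\lambda^r$, and the equivariance equation reduces to $f\cdot g=\lambda^r(f\circ\gamma)$. Suppose $f(0)=0$ and let $f_{\nu_0}$ be the lowest-order nonzero homogeneous component of $f$ at $0$, with $\nu_0\geq 1$. Identifying the degree-$\nu_0$ parts on both sides gives $\lambda^r f_{\nu_0}=\lambda^r(f_{\nu_0}\circ u_0)$, so that $f_{\nu_0}$ is a nonzero eigenvector for the eigenvalue $1$ of the operator $P\mapsto P\circ u_0$ on $\mathbb C^{\Hom}_{\nu_0}[Z_1,Z_2]$. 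The eigenvalues of this operator are the products $\lambda^{a}(\lambda^{r})^{b}$ with $a+b=\nu_0\geq 1$, all of modulus strictly less than $1$. This contradiction forces $X_0(0)\neq 0$, and by continuity $X_0$ is nowhere vanishing on some open neighborhood $U\subset D$ of $0$.

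It remains to extend $X_0$ globally. Since $\gamma$ is a contraction at $0$, $\mathbb C^2=\bigcup_{n\geq 0}\gamma^{-n}(U)$. On each $\gamma^{-n}(U)$ define the holomorphic vector field
\begin{equation*}
    X_n(z)\,:=\,\lambda^{-rn}\,d_z\gamma^n\bigl(X_0(\gamma^n(z))\bigr).
\end{equation*}
The equivariance equation for $X_0$, together with the chain rule, gives $X_{n+1}|_{\gamma^{-n}(U)}=X_n$, so the family $(X_n)$ patches to a global holomorphic vector field $X$ on $\mathbb C^2$ taking values in $\ker(\omega)$ and satisfying $\gamma_*X=\lambda^r X$ by construction; it is nowhere vanishing because each $d\gamma^n$ is invertible. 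The main obstacle in this plan is the Taylor-expansion step of the middle paragraph: it is precisely there that one must exploit the very specific resonance $\alpha_2=\lambda^r$ together with $|\lambda|<1$ to exclude any other relation of the form $\lambda^{a}(\lambda^{r})^{b}=1$ with $a+b\geq 1$, thereby isolating $X_0(0)$ as the only possible obstruction.
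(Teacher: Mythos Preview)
Your argument is correct and follows the same blueprint as the paper's proof—Riesz--Schauder to produce an eigensection, a Taylor-expansion resonance argument to force non-vanishing at $0$, and the contraction trick to globalize. The only difference is cosmetic: the paper invokes the induction step of Theorem~\ref{th 1} to obtain an eigensection $\eta$ of the \emph{dual} line bundle $\ker(\omega)^*$ and then defines $X$ by the relation $\eta(X)=1$, whereas you rerun the Riesz--Schauder argument directly on sections of $\ker(\omega)$. One small slip: in your extension formula you need $(d_z\gamma^n)^{-1}$ rather than $d_z\gamma^n$, since $X_0(\gamma^n(z))$ lives in $T_{\gamma^n(z)}\mathbb C^2$ and must be pulled back to $T_z\mathbb C^2$.
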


\begin{proof}
    According to the proof of Theorem \ref{th 1}, there exists a section $\eta_0$ of the bundle $\ker(\omega)^*$ over $D$ satisfying the equation $(\eta_0)_{\gamma(z)}\circ d_z\gamma\vert_{\ker(\omega)_z}=\lambda^r(\eta_0)_z$. Using the same argument as in the proof of the fact that $\omega_0$ extends to $\mathbb C^2$ (see Lemma \ref{forme globale et fermee}), $\eta_0$ extends to a global non-vanishing section of the bundle $\ker(\omega)$ verifying $\eta_{\gamma(z)}\circ d_z\gamma\vert_{\ker(\omega)_z}=\lambda^r\eta_z$. Now, since the bundle $\ker(\omega)$ is a line bundle (here we take advantage of the dimension $2$), $\eta$ produces a trivialization of the bundle $\ker(\omega)$. Namely, define $X$ as the section of $\ker(\omega)$ satisfying $\eta(X)=1$. Then, $X$ can be seen as a non-vanishing holomorphic vector field on $\mathbb C^2$ generating the distribution of lines $\ker(\omega)$. We check that the equation $\eta_{\gamma(z)}\circ d_z\gamma\vert_{\ker(\omega)_z}=\lambda^r\eta_z$ implies that $\gamma_*X=\lambda^rX$, which completes the proof.
\end{proof}

At this moment, we need to prove that the germ of the contraction at $0$ can be transformed, under a local change of coordinates, into a simpler form. This simpler form is not the Poincaré-Dulac normal form that we will see down below (Section \ref{Poincare Dulac}). But this simpler form will help us to see geometric structures of higher order on our Hopf surface.

\begin{Lemma}\label{integration forme et vec f}
    There exists a local system of coordinates $\varphi:U\to V$ between two open neighborhoods of $0\in\mathbb C^2$ such that $\varphi\gamma\varphi^{-1}$ has the form 
    $$
    \begin{matrix}
        (w_1,w_2) & \longmapsto & (\lambda w_1,\lambda^r w_2+f(w_1))
    \end{matrix}
    $$
    where $f$ is a local holomorphic function of one variable, verifying $f(0)=0$.
\end{Lemma}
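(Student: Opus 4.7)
The plan is to combine the global closed holomorphic $1$-form $\omega$ of Lemma \ref{forme globale et fermee} with the global holomorphic vector field $X \in \ker \omega$ of Lemma \ref{vector field} to build adapted local coordinates at $0 \in \mathbb C^2$. In such coordinates the two equivariance relations $\gamma^*\omega = \lambda\omega$ and $\gamma_*X = \lambda^r X$ will translate directly into the required triangular form of $\gamma$.

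First I would straighten $X$. Since $X$ does not vanish at $0$, by the holomorphic flow-box theorem there exist local coordinates $(\tilde w_1, \tilde w_2)$ at $0$ in which $X = \partial/\partial \tilde w_2$. In these coordinates the condition $\omega(X) = 0$ forces $\omega$ to have no $d\tilde w_2$ component, so $\omega = g(\tilde w_1, \tilde w_2)\, d\tilde w_1$ for some holomorphic $g$. The closedness of $\omega$ (Lemma \ref{forme globale et fermee}) then gives $\partial g/\partial \tilde w_2 = 0$, hence $g = g(\tilde w_1)$, with $g(0) \neq 0$ because $\omega$ is non-vanishing. Setting $w_1 := \int_0^{\tilde w_1} g(s)\,ds$ (a local biholomorphism of the first variable) and $w_2 := \tilde w_2$ produces local coordinates at $0$ in which simultaneously
$$ \omega = dw_1, \qquad X = \partial/\partial w_2. $$

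Next I would compute $\gamma$ in these coordinates. Writing $\gamma(w_1,w_2) = (\gamma_1,\gamma_2)$, the equivariance $\gamma^*\omega = \lambda\omega$ reads $d\gamma_1 = \lambda\, dw_1$, so $\gamma_1 = \lambda w_1 + c$, and the fact that $\gamma(0) = 0$ forces $c = 0$. In particular $\partial \gamma_1/\partial w_2 = 0$, and therefore
$$ (d_{(w_1,w_2)}\gamma)(\partial/\partial w_2) = (\partial \gamma_2/\partial w_2)(w_1,w_2)\, \partial/\partial w_2 $$
at the image point. The relation $\gamma_*X = \lambda^r X$ then becomes $\partial \gamma_2/\partial w_2 \equiv \lambda^r$, which integrates to $\gamma_2 = \lambda^r w_2 + f(w_1)$ for some holomorphic function $f$ of one variable, with $f(0) = 0$ since $\gamma(0) = 0$.

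The only delicate point is the simultaneous straightening of $X$ and integration of $\omega$ into a single coordinate system. It works precisely because $X \in \ker\omega$ (so $\omega$ loses its $d\tilde w_2$-component once $X$ is rectified) and because $\omega$ is closed (so the remaining coefficient depends only on $\tilde w_1$ and can be absorbed into a holomorphic change of the first coordinate). Once this compatible pair of coordinates is produced, the two equivariance equations translate the problem into the two trivial first-order ODEs above, and the claimed normal form follows.
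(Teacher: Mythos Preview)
Your proof is correct and follows essentially the same route as the paper: straighten $X$ by the flow-box theorem, use $\omega(X)=0$ and $d\omega=0$ to reduce $\omega$ to $g(\tilde w_1)\,d\tilde w_1$, absorb $g$ into the first coordinate, and then read off the triangular form of $\gamma$ from the two equivariance relations. You even spell out the final step (deducing $\gamma_1=\lambda w_1$ and $\gamma_2=\lambda^r w_2+f(w_1)$ from the ODEs) more explicitly than the paper does.
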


\begin{proof}
    According to Lemma \ref{vector field}, there exists a nowhere vanishing vector field $X$ on $\mathbb C^2$ satisfying $\gamma_*X=\lambda^rX$.
    The flow-box theorem asserts that there exists a system of local coordinates $(\xi_1,\xi_2)$ in a neighborhood of $0$ such that $X=\frac{\partial}{\partial\xi_2}$. In these coordinates, the holomorphic $1$-form $\omega$ reads 
    $$
    \omega=g(\xi_1,\xi_2)d\xi_1+h(\xi_1,\xi_2)d\xi_2.
    $$
    The condition that $X$ belongs to the kernel of $\omega$ (see Lemma \ref{vector field}) implies that the map $h$ is zero. Now, we use that the form $\omega$ is closed (see Lemma \ref{forme globale et fermee}). This implies that the map $g$ does not depend on $\xi_2$. We therefore rewrite 
    $$
    \omega=g(\xi_1)d\xi_1.
    $$
    Let $G$ be the anti-derivative of $g$ vanishing at $0$. Then we have $\omega=\mathrm d G$. 
    We consider the map 
    $$
    (\xi_1,\xi_2)\mapsto (w_1,w_2)=(G(\xi_1),\xi_2).
    $$
    Since $\omega$ does not vanish, $G$ is a local biholomorphism, and the latter map is a change of coordinates in a neighborhood of $0$. In this new system of coordinates, the vector field $X$ reads $\frac{\partial}{\partial w_2}$ and the form $\omega$ reads $\mathrm dw_1$.
    Now, the equations $\gamma^*\omega=\lambda\omega$ and $\gamma_*X=\lambda^rX$ imply that, in the coordinates $(w_1,w_2)$, the contraction $\gamma$ reads 
    $$
    \begin{matrix}
        (w_1,w_2) & \longmapsto & (\lambda w_1,\lambda^r w_2+f(w_1))
    \end{matrix}
    $$
    which proves the lemma.
\end{proof}

The consequence of this lemma is that we immediately see that the contraction $\gamma$ preserves (integrable) $G$-structures of arbitrary order, namely, if we denote by $\beta$ the couple of eigenvalues $(\lambda,\lambda^r)$, then $\gamma$ obviously preserves a $G^k_\beta$-structure of order $k$ for every $1\leq k\leq r$. To see that, it suffices to notice that for any element $(x_1,x_2)$ in $\mathbb C^2$, the map 
$$
(w_1,w_2)\mapsto (\lambda(w_1+x_1),\lambda^r(w_2+x_2)+f(w_1+x_1))-(\lambda x_1,\lambda^r x_2+f(x_1))
$$
has its $k$-jet at $0$ in the group $G^k_\beta$. In other words, at each point where the map 
$$
    \begin{matrix}
        (w_1,w_2) & \longmapsto & (\lambda w_1,\lambda^r w_2+f(w_1))
    \end{matrix}
$$
is defined, the translation of this map to build a map fixing the origin has its $k$-jet in the group $G^k_\beta$. Notice that all those structures are compatible. Indeed, they are all induced by the one of order $r$. We denote by $R_r$ the principal $G^r_\beta$ sub-bundle of $\mathcal R^r(\mathbb C^2)$ given by the structure of order $r$.

What is natural to do is to try to extend the $G^r_\beta$-structure of order $r$ in a $G^r_\beta$-structure of order $r+1$, using the technique of Theorem \ref{recurrence pour les structures}.  The problem here is that the vector bundle we consider in the proof still have a non-trivial first cohomology set.

Let us introduce new groups. We denote by $H_\beta^{r+1}$ (respectively $H^{r+2}_\beta$) the group $G^{r+1}_{\beta'}$ (respectively $G^{r+1}_{\beta'}$), where $\beta'$ denotes the set of eigenvalues $(\lambda,\lambda^{r+2})$.  Notice that the introduction of this new set of eigenvalues is purely artificial. The reason to do that is to use what we already did to prove that $H_\beta^{r+1}$ and $H^{r+2}_\beta$ are complex Lie groups (see Section \ref{group}). Using these definitions, Lemma \ref{integration forme et vec f} shows that $\gamma$ preserves a $H_\beta^{r+1}$-structure of order $r+1$, that will be denoted by $\widetilde{R}_{r+1}$, and a $H_\beta^{r+2}$-structure of order $r+2$, that will be denoted by $\widetilde{R}_{r+2}$. Those structures are again compatible with the others, i.e. all the structures we consider are induced by $\widetilde{R}_{r+2}$.

We consider now extensions of $R_r$ at order $r+2$ as sub-bundles of $\widetilde{R}_{r+2}$ instead of sub-bundles of $\mathcal{R}^{r+2}(\mathbb C^2)$. This has the effect of changing the vector bundle that we consider and now we can argue as in Theorem \ref{recurrence pour les structures} to prove the following.

\begin{Theorem}\label{extension surfaces}
    There exists a unique holomorphic section of the bundle $\widetilde{R}_{r+2}\to R_r$ which is $G^r_\beta$-equivariant and $<\gamma>$-equivariant.
\end{Theorem}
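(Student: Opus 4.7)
The plan is to adapt the cohomological strategy of Theorem \ref{recurrence pour les structures} to Hopf surfaces, exploiting the larger ambient structure $\widetilde{R}_{r+2}$ furnished by Lemma \ref{integration forme et vec f}. As in that proof, a $G^r_\beta$- and $<\gamma>$-equivariant section $s:R_r\to \widetilde{R}_{r+2}$ is the same datum as a $<\gamma>$-invariant holomorphic section of an affine bundle over $M$ whose typical fiber is the Lie algebra quotient $\mathfrak h^{r+2}_\beta/\mathfrak g^r_\beta$. This quotient is two-dimensional, with a canonical basis given by the two new sub-resonant modes $w_1^{r+1}e_2$ and $w_1^{r+2}e_2$ that are allowed by the larger group $H^{r+2}_\beta$ but absent from $G^r_\beta$. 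Existence and uniqueness of the equivariant section then reduce to the cohomological statements $h^1(M,B)=0$ and $h^0(M,B)=0$, where $B$ is the rank-two vector bundle associated to this affine bundle.

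Next I would verify that $B$ is a Mall bundle and compute the relevant $\gamma$-action. As in Theorem \ref{recurrence pour les structures}, a direct cocycle computation shows that $B$ is associated, via the adjoint representation $\mathrm{diag}(a_1,a_2)\mapsto \mathrm{diag}(a_2a_1^{-(r+1)},a_2a_1^{-(r+2)})$ of $G^1_\beta$ on $\mathfrak h^{r+2}_\beta/\mathfrak g^r_\beta$, to the $G^1_\beta$-reduction of the first-order frame bundle. In particular $\pi^*B$ is trivial on $\mathbb C^2\setminus\{0\}$ and extends holomorphically across $0$, and the eigenvalues of the $\gamma$-action on the fiber of $B$ over $0$ are $\lambda^r\lambda^{-(r+1)}=\lambda^{-1}$ and $\lambda^r\lambda^{-(r+2)}=\lambda^{-2}$. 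The main step is to invoke Theorem \ref{Mall theorem for surfaces}, which requires verifying that the operator $A-\gamma^*$ on $H^1(\mathbb C^2\setminus\{0\},\pi^*B)$ is one-to-one. Mimicking the Laurent series argument from the proof of Theorem \ref{OV surfaces}, a nonzero element in the kernel, expanded on $(\mathbb C^*)^2$ and evaluated at its leading Laurent term $c_p z^p$ with $p\in(-\mathbb N^*)^2$, produces
$$
\lambda^{p_1+rp_2}=\lambda^{r-k},\qquad k\in\{r+1,r+2\},
$$
hence $p_1+rp_2\in\{-1,-2\}$. Since $p_1,p_2\leq -1$ and $r\geq 2$, we get $p_1+rp_2\leq -1-r\leq -3$, a contradiction.

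The final step is to show $h^0(M,B)=0$, which follows from the Taylor-expansion argument of Theorem \ref{recurrence pour les structures}. Any global section of $B$ over $M$ lifts to a $<\gamma>$-equivariant section of $\pi^*B$ on $\mathbb C^2\setminus\{0\}$, which extends across $0$ by Hartogs since $\pi^*B$ extends as a trivial bundle on $\mathbb C^2$. Reading the equivariance relation at the lowest nonzero Taylor order yields $\lambda^{\nu_1+r\nu_2}=\lambda^{r-k}$ with $\nu\in\mathbb N^2$ and $k\in\{r+1,r+2\}$, forcing $\nu_1+r\nu_2\in\{-1,-2\}$, which is incompatible with $\nu\in\mathbb N^2$. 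Combined with Theorem \ref{Mall theorem for surfaces}, this gives $h^1(M,B)=h^0(M,B)=0$, hence existence and uniqueness of the desired section. The main obstacle here is precisely the failure of Mall's theorem in dimension two: the whole point of working inside $\widetilde{R}_{r+2}$ rather than $\mathcal R^{r+2}(\mathbb C^2)$ is to shrink the fiber of $B$ enough that both the Laurent resonance equation (for injectivity of $A-\gamma^*$) and the Taylor resonance equation (for $h^0=0$) become incompatible with the constraint $r\geq 2$.
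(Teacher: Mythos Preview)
Your argument is correct and follows essentially the same cohomological strategy as the paper. The only difference is organizational: the paper splits the extension into two steps, first producing a unique equivariant section of the affine \emph{line} bundle $\widetilde{R}_{r+1}\to R_r$ (fiber $\mathbb C\,z_1^{r+1}e_2$), then of $\widetilde{R}_{r+2}\to R_{r+1}$ (fiber $\mathbb C\,z_1^{r+2}e_2$), and composes; you treat $\widetilde{R}_{r+2}\to R_r$ directly as an affine bundle with two-dimensional fiber $\mathfrak h^{r+2}_\beta/\mathfrak g^r_\beta$. This is legitimate because the kernel $N$ of $H^{r+2}_\beta\to G^r_\beta$ is a \emph{normal} abelian subgroup (a short computation with $(z_1,z_2)\mapsto(az_1,bz_2+\sum d_kz_1^k)$ confirms conjugation sends $(c_{r+1},c_{r+2})$ to $(ba^{-(r+1)}c_{r+1},\,ba^{-(r+2)}c_{r+2})$), so the associated bundle $B$ is genuinely a rank-two Mall bundle. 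Your resonance equations $p_1+rp_2\in\{-1,-2\}$ for the Laurent side and $\nu_1+r\nu_2\in\{-1,-2\}$ for the Taylor side are exactly the ones the paper obtains (separately for $k=r+1$ and $k=r+2$), and the contradiction via $p_1+rp_2\leq -1-r\leq -3$ for $r\geq 2$ is identical. The two-step version has the mild advantage of isolating the intermediate structure $R_{r+1}$ explicitly, which is what is used in Section~\ref{cartan geo}; your one-step version yields it as $\pi_{r+2,r+1}$ of the image of your section.
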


\begin{proof}
    We start with the problem of finding a $G^r_\beta$-equivariant and $<\gamma>$-equivariant section of $\widetilde{R}_{r+1}\to R_r$. The latter is an affine line bundle, with fiber isomorphic to $\mathbb CZ_1^{r+1}e_2$ where $Z_1^{r+1}e_2$ is considered as the homogeneous polynomial map $\mathbb C^2\to\mathbb C^2$ of order $r+1$ given by $(z_1,z_2)\mapsto(0,z_1^{r+1})$. We infer that, by Hartogs' theorem, it is enough to work on $\mathbb C^2\setminus\{0\}$. To deal with the $<\gamma>$-equivariance, we work on $M$. We recall that all the geometric structures over $\mathbb C^2$ that we consider are $<\gamma>$-invariant. 

    The difference between two local equivariant sections of $\widetilde{R}_{r+1}\to R_r$ is a local section of the induced vector bundle $B$ over $M$ with fiber $\mathbb CZ_1^{r+1}e_2$. 
    
    As usual (see proofs of Theorems \ref{recurrence pour les structures} and \ref{OV surfaces}), we need to show, using Mall theorem on surfaces (Theorem \ref{Mall theorem for surfaces}), that $h^0(M,B)=h^1(M,B)=0$.

    We start by proving that $h^0(M,B)=0$. We take a global section of $B$, we pull it back to $\mathbb C^2\setminus\{0\}$ and extend it to $\mathbb C^2$ using Hartogs Theorem. Considering the Taylor expansion at $0$ of the section and writing the $\gamma$-invariance, we infer that, if the section is nonzero, there exists a relation of the type 
    $$
    \alpha_2=\alpha_1^{r+1}\alpha^p
    $$
    where $p=(p_1,p_2)$ is an element of $\mathbb N^2$. In our case, this relation reads
    $$
    \lambda^r=\lambda^{r+1+p_1+rp_2}
    $$
    and leads to a contradiction since $0<\vert\lambda\vert<1$.

    Therefore, $h^0(M,B)=0$. This implies that an eventual global equivariant section of $\widetilde{R}_{d+1}\to R_d$ would be unique, and that, using Theorem \ref{Mall theorem for surfaces} and the same computations as in the proof of Theorem \ref{OV surfaces}, we will have $h^1(M,B)=0$ if a relation of the following type leads to a contradiction:
    $$
    \alpha_1^{r+1}=\alpha_2\alpha^{-p}
    $$ where $p=(p_1,p_2)$ is an element of $(-\mathbb N^*)^2$. However, such a relation reads
    $$
    \lambda^{r+1}=\lambda^{r-p_1-rp_2}
    $$
    which is a contradiction. Therefore, we have that $h^1(M,B)=0$.

    We infer that there exists a unique $G^r_\beta$-equivariant section of $R_r\to\widetilde{R}_{r+1}$ over the manifold $M$. Pulling back everything on $\mathbb C^2\setminus\{0\}$ and extending everything to $\mathbb C^2$ by Hartogs' Theorem, we get a unique $G^r_\beta$-equivariant section of $R_r\to\widetilde{R}_{r+1}$ over the manifold $\mathbb C^2$ which is $<\gamma>$-equivariant. We denote by $R_{r+1}$ the image of this section and this is a $<\gamma>$-invariant $G^r_\beta$-structure of order $r+1$ over $\mathbb C^2$.

    We apply the same argument to find an equivariant section of the bundle $\widetilde{R}_{r+2}\to R_{r+1}$. The only thing that is different is that we have to consider relations of type 
    $$
    \lambda^{r+2}=\lambda^{r-p_1-rp_2}
    $$
    where $p=(p_1,p_2)$ is an element of $(-\mathbb N^*)^2$. Such a relation cannot exist because, by assumption, $r\geq 2$.

    Therefore, there exists a unique equivariant section of $\widetilde{R}_{r+2}\to R_{r+1}$, and composing with the first section we computed, we get the result.
\end{proof}

Reading the proof of Theorem \ref{extension surfaces}, we can give two remarks. The first one is that the proof breaks if $r=1$, i.e. if $d_0\gamma$ has a double eigenvalue. Fortunately, we discussed this case above. The second one is that we cannot produce with this proof for $G^r_\beta$-structures of any order, namely, relations of the form 
$$
\lambda^{2r+1}=\lambda ^{r-p_1-rp_2},
$$ where $p=(p_1,p_2)$ is an element of $(-\mathbb N^*)^2$, can exist. Indeed, it suffices to choose $p_1=p_2=-1$. Since $r\geq2$, Theorem \ref{extension surfaces} shows that we can \textit{a priori} extend only twice the structure of order $r$. However, we will see in the next section that this is sufficient.

\section{Cartan connection, integrability and Poincaré- Dulac theorem}\label{Poincare Dulac}

\subsection{Another Lie group}\label{new lie group}

In this subsection, we introduce a new complex Lie group of interest. This Lie group will also depend on the data $\beta$ of the eigenvalues of the contraction generating the Hopf manifold. We begin with a central lemma.

\begin{Lemma}\label{conj trans G beta}
    For any element $P$ in the complex Lie group $G^r_\beta$ and for any $z$ in $\mathbb C^n$, the automorphism of $\mathbb C^n$ given by $\theta_{-P(z)}P\theta_z$, where $\theta_w$ is the translation of vector $w$, is an element of $G^r_\beta$.
\end{Lemma}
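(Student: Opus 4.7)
The plan is to unwind the conjugation explicitly and verify that the resulting polynomial automorphism of $\mathbb{C}^n$ has the sub-resonant form described in Section~\ref{group}. Set
$$
Q(w) := \theta_{-P(z)}\,P\,\theta_z(w) = P(z+w) - P(z).
$$
Then $Q(0)=0$, and $Q$ is a polynomial automorphism of $\mathbb{C}^n$ because $P \in \Aut(\mathbb{C}^n)$ by Proposition~\ref{proof group} and translations are affine. What remains is to check that each coordinate of $Q$ has the $\beta$ sub-resonant shape.

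Using the coordinate-wise description of $P$ from Section~\ref{group}, the $j$-th component of $Q$ reads
$$
Q_j(w) = a_j w_j + \bigl[ P_j(z_1+w_1, \ldots, z_{j-1}+w_{j-1}) - P_j(z_1, \ldots, z_{j-1}) \bigr],
$$
where $P_j$ is the polynomial appearing in the sub-resonant normal form of $P$, depending only on the first $j-1$ variables. The strictly lower-triangular dependence (namely, $w_j$ only enters through the diagonal term $a_j w_j$) is thus already visible, and the diagonal coefficient $a_j \neq 0$ is inherited from $P$.

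The main observation is a downward-closure property of $\mathcal{Q}_j$ under the coordinate-wise order on $\mathbb{N}^n$. Expanding the bracket with the multinomial theorem, $P_j(z+w) - P_j(z)$, viewed as a polynomial in $w$, is a $\mathbb{C}$-linear combination of monomials $w^{q'}$ with $q'$ ranging over nonzero multi-indices satisfying $q' \leq q$ for some $q$ appearing in $P_j$. Each such $q$ belongs to $\mathcal{Q}_j$ with support in $\{1, \ldots, j-1\}$ (by the very definition of sub-resonance for the $j$-th coordinate), and is dominated by some $p \in \mathcal{P}_j$; transitivity of $\leq$ then gives $q' \leq p$, so $q' \in \mathcal{Q}_j$, and $q'$ still has support in $\{1, \ldots, j-1\}$ since $q$ does. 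Therefore every monomial of $Q_j - a_j w_j$ is sub-resonant in position $j$, and this across all $j$ gives $Q \in G^r_\beta$.

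The only substantive input is the downward stability of $\mathcal{Q}_j$ under $\leq$, which is built into its definition as the set of nonzero multi-indices dominated by some element of $\mathcal{P}_j$. I expect no real obstacle; the main point of vigilance is to distinguish between resonances (elements of $\mathcal{P}_j$) and sub-resonances (elements of $\mathcal{Q}_j$), only the latter being stable under taking smaller exponents, and it is precisely this stability that makes the binomial expansion land back inside $G^r_\beta$.
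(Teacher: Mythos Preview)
Your proof is correct and follows essentially the same route as the paper's: both arguments expand $P(z+w)-P(z)$ and observe that every monomial $w^{q'}$ appearing in the $j$-th coordinate satisfies $q'\leq q$ for some $q\in\mathcal Q_j$, whence $q'\in\mathcal Q_j$ by the built-in downward closure of $\mathcal Q_j$. Your write-up is simply more explicit about the multinomial expansion and the triangular coordinate shape, whereas the paper states the ``bounded above'' property in one sentence.
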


\begin{proof}
    Since $P$ is a polynomial map of order $r$, the automorphism $\theta_{-P(z)}P\theta_z$, sending $0$ to $0$, is also a polynomial map of order $r$. Moreover, in the Taylor expansion at $0$ of the latter polynomial map, the monomials appearing are ``bounded above" by those of $P$, in the sense that if $Z^pe_j$ appears in the expansion of $\theta_{-P(z)}P\theta_z$, where $Z=(Z_1,...,Z_n)$, $Z^p=Z_1^{p_1}...Z_n^{p_n}$ and $e_j$ is as usual the $j$-th standard coordinate of $\mathbb C^n$, then there exists $q\in \mathbb N^n$ such that $p\leq q$ component by component, and such that $Z^qe_j$ appears in $P$. But then $q$ is $j$ sub resonant for the eigenvalues $\beta$, in the sense of Section \ref{group}, and therefore, $p$ is also $j$ sub resonant for the eigenvalues $\beta$.
    Therefore, the polynomial map $\theta_{-P(z)}P\theta_z$ is an element of $G^r_\beta$.
\end{proof}

We consider the subgroup $\widetilde{G^r_\beta}$ of the group of automorphisms of $\mathbb C^n$ generated by $G^r_\beta$ and the translations. Then, as a corollary of Lemma \ref{conj trans G beta}, we get the following.

\begin{Corollary}\label{biholo with the product}
    The group $\widetilde{G^r_\beta}$ is a complex Lie group, biholomorphic to the complex manifold $\mathbb C^n\times G^r_\beta$. Through this biholomorphism, the group operation reads
    $$
    (z,P)(w,Q)=(z+P(w),\theta_{-P(w)}P\theta_wQ).
    $$
\end{Corollary}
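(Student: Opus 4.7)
The plan is to define the holomorphic map $\Phi : \mathbb C^n \times G^r_\beta \to \widetilde{G^r_\beta}$ by $\Phi(z,P) = \theta_z \circ P$, show that it is a bijection onto $\widetilde{G^r_\beta}$, and then transport the group law of $\widetilde{G^r_\beta}$ through $\Phi$ to obtain the advertised multiplication formula. The fact that the transported law is holomorphic will automatically equip $\widetilde{G^r_\beta}$ with the structure of a complex Lie group, and the biholomorphism $\Phi$ yields the description of the underlying complex manifold.

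The key step is to verify that the subset $S := \{\theta_z \circ P : z \in \mathbb C^n,\ P \in G^r_\beta\}$ of $\Aut(\mathbb C^n)$ is already closed under composition and inversion, hence equals $\widetilde{G^r_\beta}$ (since $S$ clearly contains both $G^r_\beta$ and all translations). For composition, I compute directly
$$
(\theta_z P \circ \theta_w Q)(x) = z + P(w + Q(x)).
$$
Evaluating at $x = 0$ and using $Q(0) = 0$ identifies the translation part as $z + P(w)$, while the remainder is
$$
P(w + Q(x)) - P(w) = \bigl(\theta_{-P(w)} P \theta_w\bigr)(Q(x)).
$$
Lemma \ref{conj trans G beta} ensures that $\theta_{-P(w)} P \theta_w \in G^r_\beta$, and Proposition \ref{proof group} then gives $(\theta_{-P(w)} P \theta_w) \circ Q \in G^r_\beta$. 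This simultaneously proves closure of $S$ under composition and produces the formula $(z,P)(w,Q) = (z + P(w),\, \theta_{-P(w)} P \theta_w Q)$. Closure under inversion is handled by the same trick: writing $f = \theta_z P$ and $u := f^{-1}(0) = P^{-1}(-z)$, one has $f^{-1} = \theta_u \circ (\theta_{-u} P^{-1} \theta_{-z})$, and Lemma \ref{conj trans G beta} applied to $P^{-1}$ at the point $-z$ places the rightmost factor in $G^r_\beta$.

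Bijectivity of $\Phi$ is then immediate: any element $f = \theta_z P \in \widetilde{G^r_\beta}$ satisfies $f(0) = z$ (because $P(0) = 0$), so the pair $(z,P) = (f(0),\, \theta_{-f(0)} \circ f)$ is uniquely determined by $f$. Since $G^r_\beta$ carries its natural structure of a closed complex Lie subgroup of $\mathcal D^r(\mathbb C^n)$ (noted at the end of Section \ref{group}), both $\Phi$ and its inverse are holomorphic, so $\Phi$ is a biholomorphism. The only nontrivial input is Lemma \ref{conj trans G beta}; beyond this I anticipate no real obstacle, as the remainder is a direct verification and the multiplication formula obtained is visibly holomorphic in $(z,P,w,Q)$.
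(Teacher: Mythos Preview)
Your proposal is correct and follows essentially the same approach as the paper: define the map $(z,P)\mapsto \theta_z P$, use Lemma~\ref{conj trans G beta} as the key input to identify $\widetilde{G^r_\beta}$ with the set $\{\theta_z P\}$, and read off the multiplication formula from a direct computation. Your argument for surjectivity (showing the set $S$ is already closed under composition and inversion, hence equals the generated subgroup) is slightly more explicit than the paper's, which asserts that any element of $\widetilde{G^r_\beta}$ fixing the origin lies in $G^r_\beta$ ``by Lemma~\ref{conj trans G beta}'' without spelling out the induction; but the underlying idea is identical.
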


\begin{proof}
    By definition, we have 
    $$
    \widetilde{G^r_\beta}=<G^r_\beta,\{\theta_z, z\in  \mathbb C^n\}>_{Aut(\mathbb C^n)}.
    $$
    We consider the map 
    $$
    \begin{matrix}
        \rho & : & \mathbb C^n\times G^r_\beta & \longrightarrow & \widetilde{G^r_\beta} \\
        &&(z,P) & \longmapsto & \theta_zP.
    \end{matrix}
    $$
    This map is one-to-one, because if $\theta_zP=\theta_wQ$, then $w$ and $z$ are the respective images of $0$ by both automorphisms, so $z=w$ and then $P=Q$. Now the map $\rho$ is onto because of Lemma \ref{conj trans G beta}. Indeed, an element $f$ of $\widetilde{G^r_\beta}$ is a finite composition of elements of $G^r_\beta$ and translations. As an automorphism of $\mathbb C^n$, it sends the origin to a vector $z=f(0)$. Now the composition $P=\theta_{-z}f$ is an element of $\widetilde{G^r_\beta}$ fixing the origin. But by Lemma \ref{conj trans G beta}, such an element belongs to $G^r_\beta$. Therefore, we have $f=\rho(z,P)$.

    Computing the composition $\theta_zP\theta_wQ$, we get (notice that it illustrates the previous claims)
    $$
    \begin{matrix}
        \theta_zP\theta_wQ & = & \theta_z\theta_{P(w)}\theta_{-P(w)}P\theta_wQ \\
        & = & \theta_{z+P(w)}\theta_{-P(w)}P\theta_wQ \\
        & = & \rho(z+P(w),\theta_{-P(w)}P\theta_wQ).
    \end{matrix}
    $$

    To finish the proof, we notice that the complex manifold $\mathbb C^n\times G^r_\beta$ endowed with the group operation given by $\rho$ is naturally a complex Lie group, since every operation involved is holomorphic.
\end{proof}

By definition, the complex Lie group $G^r_\beta$ is a Lie subgroup of $\widetilde{G^r_\beta}$. In the next subsection, we will see that the geometric structures that we build in Section \ref{geo str ord sup} induce a Cartan geometry on the Hopf manifold with model $(\widetilde{G^r_\beta},G^r_\beta)$ (see Section \ref{Sect Cart Geo}). 

We recall from Section \ref{subsection canonical form} the existence of the representation $\underline{Ad}^{r+1}:\mathcal{D}^{r+1}(\mathbb C^n)\to GL(T_0\mathbb C^n\times\mathfrak d^{r}(\mathbb C^n))$. Using Corollary \ref{biholo with the product}, the Lie algebra of $\widetilde{G^r_\beta}$ is isomorphic (as complex vector spaces, not as Lie algebras) with $T_0\mathbb C^n\times \mathfrak g^r_\beta$, where $\mathfrak g^r_\beta$ is the Lie algebra of $G^r_\beta$. Therefore, it can be seen as a subspace of $T_0\mathbb C^n\times\mathfrak d^{r}(\mathbb C^n)$. 

\begin{Lemma}\label{adjoint est bien adjoint}
    Let $P$ be an element of $G^r_\beta$. The space $T_0\mathbb C^n\times \mathfrak g^r_\beta$ is preserved by the linear map $\underline{Ad}_P^{r+1}$, where $P$ is seen as an element of $\mathcal D^r(\mathbb C^n)$. Moreover, when restricted to $T_0\mathbb C^n\times \mathfrak g^r_\beta$, the linear transformation $\underline{Ad}_P^{r+1}$ coincides with the adjoint representation of the element $P$ in the group $\widetilde{G^r_\beta}$.
\end{Lemma}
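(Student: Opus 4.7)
The strategy is to exploit the naturality of conjugation by studying the holomorphic map
$$
\Phi:\widetilde{G^r_\beta}\longrightarrow \mathcal R^r(\mathbb C^n),\quad g\longmapsto j^r_0 g,
$$
which sends the neutral element $e=id$ to $j^r_0 id$. In the trivialization $\xi_r$ on the target and through the biholomorphism $\widetilde{G^r_\beta}\cong \mathbb C^n\times G^r_\beta$ of Corollary \ref{biholo with the product} (under which $\theta_z\circ P\in\widetilde{G^r_\beta}$ corresponds to $(z,P)$), a direct unwinding of the definition of $\xi_r$ gives $\xi_r(\Phi(\theta_z\circ P))=(z,j^r_0P)$. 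Hence $\Phi$ is simply the inclusion in the second factor coming from $G^r_\beta\hookrightarrow \mathcal D^r(\mathbb C^n)$, and in particular it is an injective holomorphic immersion whose differential at $e$ is the canonical inclusion
$$
d_e\Phi: T_0\mathbb C^n\times \mathfrak g^r_\beta \;\hookrightarrow\; T_0\mathbb C^n\times \mathfrak d^r(\mathbb C^n).
$$

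Next I observe that $\Phi$ is equivariant with respect to conjugation by $P$, once $P$ is viewed simultaneously as an element of $\widetilde{G^r_\beta}$ (the pair $(0,P)$) and as an element of $\mathcal D^{r+1}(\mathbb C^n)$ (since $P$ is a polynomial of degree at most $r$, one has $j^{r+1}_0P=P$). On the left, the action is the usual inner automorphism $g\mapsto PgP^{-1}$, whose differential at $e$ is by definition $Ad^{\widetilde{G^r_\beta}}_P$. On the right, the map $j^r_0 f\mapsto j^r_0(PfP^{-1})$ is well-defined as a holomorphic self-map of $\mathcal R^r(\mathbb C^n)$, because $P$ is a polynomial automorphism of $\mathbb C^n$ (so $PfP^{-1}$ makes sense for every germ $f$ at $0$); it fixes $j^r_0 id$, and by the definition recalled in Section \ref{subsection canonical form} its differential at $j^r_0 id$ is precisely $\underline{Ad}^{r+1}_{P}$. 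The equivariance
$$
\Phi(PgP^{-1})\;=\; j^r_0(PgP^{-1})
$$
then holds tautologically, since both actions amount to the same conjugation formula at the level of maps $\mathbb C^n\to\mathbb C^n$ and $\Phi$ is merely the jet map.

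Differentiating this equivariance at $e$ gives
$$
\underline{Ad}^{r+1}_P\circ d_e\Phi \;=\; d_e\Phi\circ Ad^{\widetilde{G^r_\beta}}_P,
$$
and since the image of $d_e\Phi$ is exactly $T_0\mathbb C^n\times \mathfrak g^r_\beta$, this identity proves simultaneously the two assertions of the lemma: the subspace is preserved and the restriction of $\underline{Ad}^{r+1}_P$ to it is transported onto $Ad^{\widetilde{G^r_\beta}}_P$ by $d_e\Phi$. The only point demanding a genuine (though routine) verification is the compatibility of the two trivializations that lets one identify $\Phi$ with the inclusion $(z,P)\mapsto (z,j^r_0P)$ and identify $\underline{Ad}^{r+1}_P$ with the differential at $j^r_0 id$ of the extended self-map $j^r_0 f\mapsto j^r_0(PfP^{-1})$ on $\mathcal R^r(\mathbb C^n)$; once both computations are unwound from the definitions of $\xi_r$ and of $\underline{Ad}^{r+1}$, the argument is essentially formal.
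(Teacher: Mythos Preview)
Your proof is correct and takes essentially the same approach as the paper: both unwind $\underline{Ad}^{r+1}_P$ through the trivialization $\xi_r$ and compare it with conjugation inside $\widetilde{G^r_\beta}$, the only difference being that you package the computation as equivariance of the embedding $\Phi:\widetilde{G^r_\beta}\to\mathcal R^r(\mathbb C^n)$, $g\mapsto j^r_0 g$, whereas the paper writes out the explicit formula $(z,j^r_0\phi)\mapsto (P(z),j^r_0(\theta_{-P(z)}P\theta_z\phi P^{-1}))$ and invokes Lemma~\ref{conj trans G beta} directly. The content is identical.
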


\begin{proof}
    By definition of $\underline{Ad}^{r+1}$ (see Section \ref{subsection canonical form}), we have that 
    $$
    \begin{matrix}
        \underline{Ad}_P^{r+1} & = & \mathrm d_{j^r_0id}\xi_r\circ\mathrm d_{j^r_0id}(j^r_0f\mapsto j^r_0(PfP^{-1}))\circ(\mathrm d_{j^r_0id}\xi_r)^{-1} \\
        &=& \mathrm{d}_{(0,j^r_0id)}(\xi_r\circ j^r_0f\mapsto j^r_0(PfP^{-1}) \circ \xi_r^{-1}) \\
        &=& \mathrm{d}_{(0,j^r_0id)}((z,j^r_0\phi)\mapsto (P(z),j^r_0(\theta_{-P(z)}P\theta_z\phi P^{-1}))).
    \end{matrix}
    $$
    But if $z$ is a vector in $\mathbb C^n$ and $\phi$ is an element of $G^r_\beta$, then by Proposition \ref{proof group}
 and by Lemma \ref{conj trans G beta}, then $(P(z),j^r_0(\theta_{-P(z)}P\theta_z\phi P^{-1}))$ is an element of $\mathbb C^n\times G^r_\beta$. Therefore, the space $T_0\mathbb C^n\times \mathfrak g^r_\beta$ is preserved by the linear map $\underline{Ad}_P^{r+1}$. 
 
 Moreover, computing the interior homomorphism $Int_P$ in the group $\widetilde{G^r_\beta}$, we have
 $$
 P\theta_wQP^{-1}=\theta_{P(w)}\theta_{-P(w)}P\theta_wQP^{-1},
 $$
 so by Corollary \ref{biholo with the product}, we have the equality
 $$
 \underline{Ad}_P^{r+1}\vert_{T_0\mathbb C^n\times \mathfrak g^r_\beta}=Ad_P
 $$ where the adjoint representation of the right hand side is the one of the group $\widetilde{G^r_\beta}$.
 
 \end{proof}

\subsection{The Cartan geometry induced by the G-structures}\label{cartan geo}

We recall that $r$ is the maximal length of the resonances of the set of eigenvalues $\beta=(\beta_1,...,\beta_n)$. Using this definition, we deduce that for every $k\geq r$, we have 
$$
G^k_\beta=G^r_\beta.
$$
Moreover, since $G^r_\beta$ is a Lie group for the usual composition (not the truncated composition), for every $k\geq r$, $G^r_\beta$ can be seen as a Lie subgroup of $\mathcal{D}^k(\mathbb C^n)$, the group of jets of order $k$ at $0$ of germs of biholomorphism $(\mathbb C^n,0)\to (\mathbb C^n,0)$. 

If the dimension $n$ is not lower than $3$, then, Corollary \ref{existence de structure a tout ordre} shows that there exists a $G^r_\beta$-structure of order $r$ on $M$. If $n=2$, this is proved in Section \ref{geo str on surf}. This structure can be written as a $G^r_\beta$-principal sub-bundle $R_r$ of $\mathcal R^r(M)$. Using Theorem \ref{recurrence pour les structures} if $n\geq 3$ and Theorem \ref{extension surfaces} for surfaces, and using the previous observations, there exists a unique $G^r_\beta$-structure of order $r+1$ on $M$ extending $R_r$. 

This can be reformulated as follows. There exists a unique $G^r_\beta$-equivariant section 
$$
\Gamma_r:R_r\longrightarrow \mathcal R^{r+1}(M)
$$
of the natural map 
$$
\mathcal R^{r+1}(M) \longrightarrow \mathcal R^{r}(M).
$$
Notice that the uniqueness in the case of surfaces comes from the fact that in the proof of Theorem \ref{recurrence pour les structures}, the vanishing of the $h^0$ of the considered bundle is still true for surfaces.

We recall from Section \ref{subsection canonical form} the existence of the canonical form $\chi_{r+1}:T\mathcal R^{r+1}(M)\to T_0\mathbb C^n\times\mathfrak d^{r}(\mathbb C^n)$ on $\mathcal R^{r+1}(M)$. We consider $\eta_r$ the holomorphic $1$-form on $R_r$ with values in $T_0\mathbb C^n\times\mathfrak d^{r}(\mathbb C^n)$ defined by 
$$
\eta_r=\Gamma_r^*\chi_{r+1}.
$$
As we saw in Section \ref{subsection canonical form}, this exhibits the fact that $\Gamma_r$ is a $\mathcal R^r(M)$-horizontal plane field on $R_r$, and it generates a field of frames of order $1$ of $\mathcal R^r(M)$ over $R_r$. This implies that at each point $j^r_0f\in R_r$, the linear map $(\eta_r)_{j^r_0f}$ is one-to-one onto its image.

By the equivariance property of $\Gamma_r$, and by Proposition \ref{properties cano form}, the form $\eta_r$ satisfies the following properties
\begin{enumerate}
    \item $\forall v\in \mathfrak g^r_\beta$, $\eta_{r}(X_v)\equiv (0,v)$, where $X_v$ is the fundamental vector field of $R_r$ generated by $v$ ;
    \item $\forall P\in G^r_\beta$, $\mathcal{R}_{P}^*\eta_{r}=\underline{Ad}^{r+1}_{P^{-1}}\eta_{r}$.
\end{enumerate}

The main result of this subsection is the following.

\begin{Theorem}\label{on a une geometrie de Cartan}
    The form $\eta_r$ takes values in $T_0\mathbb C^n\times \mathfrak g^r_\beta$ and defines a holomorphic Cartan geometry of type $(\widetilde{G^r_\beta},G^r_\beta)$ on the principal bundle $R_r$.
\end{Theorem}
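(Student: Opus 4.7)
The theorem asserts two things about $\eta_r$: (A) that its values lie in the subspace $T_0\mathbb C^n\times\mathfrak g^r_\beta$ of $T_0\mathbb C^n\times\mathfrak d^r$, and (B) that, together with the properties established just before the theorem, it satisfies the three axioms of a holomorphic Cartan geometry of type $(\widetilde{G^r_\beta},G^r_\beta)$. Granting (A), item (B) is a formal verification: Lemma \ref{adjoint est bien adjoint} identifies the restriction of $\underline{Ad}^{r+1}_P$ to $T_0\mathbb C^n\times\mathfrak g^r_\beta$ with the adjoint representation of $P$ in $\widetilde{G^r_\beta}$, so property~2 of $\eta_r$ becomes the equivariance axiom of a Cartan connection, property~1 becomes the fundamental-field axiom, and pointwise linear isomorphism follows from a dimension count ($n+\dim\mathfrak g^r_\beta$ on both sides) plus the injectivity check that $\eta_r(Y)=0$ forces $Y$ to be vertical (via the vanishing of its $T_0\mathbb C^n$-component), hence $Y=X_v$ for some $v\in\mathfrak g^r_\beta$, which is then $0$ by property~1.

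The real work is (A). I would set up a defect form as follows. Compose $\eta_r$ with the quotient projection $T_0\mathbb C^n\times\mathfrak d^r\to \mathfrak d^r/\mathfrak g^r_\beta$ to obtain a $\mathfrak d^r/\mathfrak g^r_\beta$-valued $1$-form $\eta_r^\vee$ on $R_r$. Property~1 shows that $\eta_r^\vee$ vanishes on the fundamental fields of $\mathfrak g^r_\beta$, i.e., on the vertical directions of $R_r\to M$. Property~2, combined with Lemma \ref{adjoint est bien adjoint} (which ensures that $\underline{Ad}^{r+1}_P$ preserves $T_0\mathbb C^n\times\mathfrak g^r_\beta$, so that the action descends to the quotient), shows that $\eta_r^\vee$ is $G^r_\beta$-equivariant. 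Hence $\eta_r^\vee$ descends to a global holomorphic $1$-form $\overline{\eta_r^\vee}$ on $M$ with values in the associated bundle $E:=R_r\times_{G^r_\beta}(\mathfrak d^r/\mathfrak g^r_\beta)$, so that $\overline{\eta_r^\vee}\in H^0(M,T^*M\otimes E)$. Statement (A) is equivalent to $\overline{\eta_r^\vee}\equiv 0$.

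To obtain the vanishing I observe that $T^*M\otimes E$ is a Mall bundle: its pullback to $\mathbb C^n\setminus\{0\}$ is trivializable (since $R_r$ was constructed in Theorem \ref{th 1} with exactly this property, and $T^*M$ always pulls back to the trivial cotangent bundle of $\mathbb C^n\setminus\{0\}$) and extends to $\mathbb C^n$ by Hartogs. In dimension $n\geq 3$ I invoke Mall's theorem (Theorem \ref{Mall theorem}) and reduce to showing $h^0(M,T^*M\otimes E)=0$; in dimension two I use Theorem \ref{Mall theorem for surfaces} after checking the injectivity condition on $H^1(\mathbb C^2\setminus\{0\},\cdot)$ as in Theorem \ref{extension surfaces}. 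A nonzero global section pulls back to a $\gamma$-invariant holomorphic section on $\mathbb C^n$; expanding it at $0$, the lowest nonzero Taylor coefficient must be an eigenvector of the linearised action of $\gamma$ on the fiber $T_0^*\mathbb C^n\otimes(\mathfrak d^r/\mathfrak g^r_\beta)$ with eigenvalue $\beta^q$ (where $|q|$ is its degree). Since that action has eigenvalue $\beta_i\beta_j\beta^{-p}$ on $dz_i\otimes[z^p e_j]$ (with $(j,p)$ necessarily non-sub-resonant), the existence of such an eigenvector produces a relation $\beta^{p+q}=\beta_i\beta_j$ which, after moving the factor $\beta_i$ into an appropriate $\beta$-exponent, upgrades $(j,p)$ to a $j$-sub-resonance, contradicting the non-sub-resonance built into $\mathfrak d^r/\mathfrak g^r_\beta$. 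This is the sense in which the Mall bundle carries the data of the resonances, exactly as in the vanishing argument of Theorem \ref{recurrence pour les structures}.

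The main obstacle is precisely this resonance-matching step: unlike in Theorem \ref{recurrence pour les structures}, where the fiber $V^{k+1}/V^{k+1}_\beta$ carried a single free index $j$, the fiber here mixes the cotangent and adjoint eigenvalues, so absorbing the extra factor $\beta_i$ to expose a $j$-sub-resonance dominating $p$ requires careful bookkeeping. The maximality of $r$ as the length of the resonances, which forces $|p|\leq r$, plays an essential role in closing the argument, and the surface case further requires the injectivity check from Theorem \ref{Mall theorem for surfaces} to substitute for the unconditional Mall equality $h^0=h^1$ available in higher dimension.
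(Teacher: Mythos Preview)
Your proposal is correct and follows essentially the same route as the paper: pass to the quotient form $\widetilde{\eta_r}$ (your $\eta_r^\vee$), descend it to a section of $T^*M\otimes E$ with $E=R_r\times_{G^r_\beta}\mathfrak d^r/\mathfrak g^r_\beta$, and kill it by the Taylor/resonance argument, after which the Cartan-geometry axioms are formal via Lemma~\ref{adjoint est bien adjoint} and a dimension count.

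One simplification: your appeal to Mall's theorem (and to Theorem~\ref{Mall theorem for surfaces} in the surface case) is superfluous here, and the paper does not use it. Mall's identity $h^0=h^1$ was needed in Theorem~\ref{recurrence pour les structures} because there the goal was to \emph{produce} a section of an affine bundle, so one had to pass from $h^0=0$ to $h^1=0$. In the present theorem the task is the opposite: you already possess a specific global section $\overline{\eta_r^\vee}$ and must show it vanishes, which is purely an $h^0$ statement. The resonance argument gives $H^0(M,T^*M\otimes E)=0$ directly, uniformly in all dimensions $n\geq 2$, with no separate surface treatment required. A minor bookkeeping point: in the paper's conventions the eigenvalue of the $\gamma$-action on the fiber class $[z^pe_j]$ is $\beta_j^{-1}\beta^p$ (not $\beta_j\beta^{-p}$), so the invariance relation reads $\beta^k\beta_l\beta_j^{-1}\beta^p=1$, i.e.\ $\beta_j=\beta^{k+p+e_l}$, which immediately exhibits $p\leq k+p+e_l\in\mathcal P_j$ and hence the $j$-sub-resonance of $p$ without the extra ``moving the factor $\beta_i$'' step you describe.
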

\begin{proof}
    We recall that we can consider every of our geometric objects as defined on $\mathbb C^n$ and with the property of being $<\gamma>$-equivariant. 

    According to the Lemma \ref{adjoint est bien adjoint}, the representation $\underline{Ad}^{r+1}\vert_{G^r_\beta}$ descends to a representation, denoted by $\underline{Ad}$ 
$$
G^r_\beta \longrightarrow GL((T_0\mathbb C^n\times \mathfrak d^{r}(\mathbb C^n))/(T_0\mathbb C^n\times \mathfrak g^r_\beta))
$$
and the form $\widetilde{\eta_r}:TR_r\to (T_0\mathbb C^n\times \mathfrak d^{r}(\mathbb C^n))/(T_0\mathbb C^n\times \mathfrak g^r_\beta)$, obtained by applying the standard quotient map to $\eta_r$, satisfies the following properties 
\begin{enumerate}
    \item $\forall v\in \mathfrak g^r_\beta$, $\widetilde {\eta_{r}}(X_v)\equiv 0$, where $X_v$ is the fundamental vector field of $R_r$ generated by $v$ ;
    \item $\forall P\in G^r_\beta$, $\mathcal{R}_{P}^*\widetilde{\eta_{r}}=\underline{Ad}^{r+1}_{P^{-1}}\widetilde{\eta_{r}}$.
\end{enumerate}
Notice that we have an isomorphism 
$$
T_0\mathbb C^n\times \mathfrak d^{r}(\mathbb C^n))/(T_0\mathbb C^n\times \mathfrak g^r_\beta)\cong \mathfrak d^{r}(\mathbb C^n)/\mathfrak g^r_\beta.
$$
Therefore, $\widetilde{\eta_r}$ can be seen as a holomorphic $<\gamma>$-invariant $1$-form on $\mathbb C^n$ with value in the associated vector bundle 
$$
R_r\times_{G^r_\beta}\mathfrak d^{r}(\mathbb C^n)/\mathfrak g^r_\beta
$$
where $G^r_\beta$ acts on $\mathfrak d^{r}(\mathbb C^n)/\mathfrak g^r_\beta$ via 
$$
\begin{matrix}
    G^r_\beta & \longrightarrow & GL(\mathfrak d^{r}(\mathbb C^n)) \\
    P & \longmapsto & \mathrm{d}_{j^r_0id}(j^r_0\phi\mapsto j^r_0(P\phi P^{-1})).
\end{matrix}
$$

We will show that such an invariant $1$-form on $\mathbb C^n$ vanishes.

Following the computations of Ornea and Verbitsky (\cite{OV2024}, 5.2. Resonant equivariant bundles p.16), in the proof of Theorem \ref{recurrence pour les structures} and the computations in the proofs of Section \ref{geo str on surf}, we need to compute the eigenvalues of the action of $\gamma$ on the fiber of the bundle $R_r\times_{G^r_\beta}\mathfrak d^{r}(\mathbb C^n)/\mathfrak g^r_\beta$ over the origin. The typical fiber of this bundle is the space $\mathfrak d^{r}(\mathbb C^n)/\mathfrak g^r_\beta$, and in a well chosen isomorphism, the action of $\gamma$ is given by $\underline{Ad}^{r+1}_{j^{r+1}_0\gamma^{-1}}$. 

The homomorphisms $\rho_{r,s}:\mathcal D^r(\mathbb C^n)\to \mathcal D^s(\mathbb C^n)$ for $0\leq s\leq r$ gives a filtration by normal Lie subgroups of the group $\mathcal D^r(\mathbb C^n)$ 
$$
\{id\}=N^r_r\subset N^r_{r-1} \subset ... \subset N^r_{1} \subset N^r_0=\mathcal D^r(\mathbb C^n) 
$$
where $N^r_s=\ker(\rho_{r,s})$. This filtration by normal subgroups gives a filtration by adjoint invariant Lie sub-algebras 
$$
\{0\}=\mathfrak n^r_r\subset \mathfrak n^r_{r-1} \subset ... \subset \mathfrak n^r_{1} \subset \mathfrak n^r_0=\mathfrak d^r(\mathbb C^n).
$$
Therefore, as in the proof of Theorem \ref{recurrence pour les structures}, the eigenvalues of the linear map $\underline{Ad}^{r+1}_{j^{r+1}_0\gamma^{-1}}$ over the space $\mathfrak d^{r}(\mathbb C^n)/\mathfrak g^r_\beta$ are 
$$
\beta_j^{-1}\beta^p
$$
with $j\in\{1,...,n\}$, $p\in \mathbb N^n$ of length satisfying $1\leq\vert p \vert \leq r$ and $p$ is not $j$ sub-resonant.

Hence, as usual, by taking Taylor expansion at $0$ of a $\gamma$-invariant holomorphic $1$-form with value in the bundle $R_r\times_{G^r_\beta}\mathfrak d^{r}(\mathbb C^n)/\mathfrak g^r_\beta$, we see that if the section is non identically zero, then there is a relation of the form 
$$
\beta^k\beta_l\beta_j^{-1}\beta^p=1
$$
where $k$ is an element of $\mathbb N^n$, $l$ is an element of $\{1,...,n\}$ and $j$ and $p$ are as above. But the existence of such a relation is a contradiction, because it implies that $p$ is $j$ sub-resonant.

Therefore, our form $\widetilde{\eta_r}$ is identically zero. We infer the first statement in the theorem, namely, that $\eta_r$ takes values in $T_0\mathbb C^n\times \mathfrak g^r_\beta$.

According to our previous observations, at each point of $R_r$, the form $\eta_r$ is one-to-one onto its image, so if we consider it as a form with value in $T_0\mathbb C^n\times \mathfrak g^r_\beta$, by an argument of dimensions, $\eta_r$ is an absolute parallelism. Moreover, we already know that the fundamental vector fields of $R_r$ are $\eta_r$ constant. The equivariance property of $\eta_r$ required in the definition of a Cartan geometry is given by Lemma \ref{adjoint est bien adjoint}.
\end{proof}

This result is equivalent to the following fact. The map $\Gamma_r:R_r\to \mathcal R^{r+1}(M)$ is a holomorphic horizontal plane field of the bundle $R_r\to M$. Notice that this gives rise to a connection on the principal bundle $R_r\to M$ but that this connection is not principal (except if $r=1$, in which case this connection induces the unique affine connection exhibited by Ornea and Verbitsky in \cite{OV2024}, p.27 Theorem 6.6).

\subsection{Integrability of the G-structure of order r+1}\label{integrability}

Let $R_r$ be a $G^r_\beta$-structure of order $r$ on $M$, and $\Gamma_r:R_r\to\mathcal{R}^{r+1}(M)$ be the $G^r_\beta$-equivariant section given by Theorem \ref{recurrence pour les structures} if $n\geq3$, and Theorem \ref{extension surfaces} if $n=2$. 

Applying successively Theorem \ref{recurrence pour les structures}, or Theorem \ref{extension surfaces}, we get also uniquely determined $G^r_\beta$ equivariant sections
$$
\Gamma_k:\im(\Gamma_{k-1})\longrightarrow \mathcal R^{k+1}(M)
$$
for every $k\geq r+1$ if $n\geq 3$, for every $k=r+1$ if $n=2$. 
We denote by $R_k$ the image of $\Gamma_{k-1}$. From this, we can prove the following.

\begin{Proposition}
    The $G^r_\beta$-structure of order $r+1$ $R_{r+1}$ is rigid of order $r$, in the sense of rigidity for Gromov geometric structures (see Section \ref{def rigidity}).
\end{Proposition}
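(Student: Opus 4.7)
The plan is to reduce the rigidity statement to the well-known fact that an absolute parallelism is rigid at order $0$, by transporting the question up to the bundle $R_r$ where the Cartan form $\eta_r$ of Theorem \ref{on a une geometrie de Cartan} lives.

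Fix $x \in M$ and let $f, g$ be germs of biholomorphisms of $M$ at $x$ that preserve $R_{r+1}$ and satisfy $j^r_x f = j^r_x g$. It suffices to show that $f = g$ as germs at $x$, which yields $j^t_x f = j^t_x g$ for every $t > r$. The first step is to observe that preservation of $R_{r+1}$ by $f$ is equivalent to the equivariance
\[
j^{r+1}f \circ \Gamma_r \;=\; \Gamma_r \circ j^r f
\]
as germs of maps $R_r \to \mathcal R^{r+1}(M)$. Indeed, $\Gamma_r$ is the inverse of the biholomorphism $\pi_{r+1,r}|_{R_{r+1}} : R_{r+1} \to R_r$, and the identity $\pi_{r+1,r} \circ j^{r+1}f = j^r f \circ \pi_{r+1,r}$ forces $j^{r+1}f(\Gamma_r(p))$ to be the unique element of $R_{r+1}$ projecting to $j^r f(p)$, namely $\Gamma_r(j^r f(p))$.

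Combining this equivariance with the intrinsic invariance of the canonical form $\chi_{r+1}$ under any $j^{r+1}f$, a direct pullback computation gives
\[
(j^r f)^* \eta_r \;=\; (j^r f)^* \Gamma_r^* \chi_{r+1} \;=\; \Gamma_r^* (j^{r+1}f)^* \chi_{r+1} \;=\; \Gamma_r^* \chi_{r+1} \;=\; \eta_r,
\]
and similarly for $g$. Thus $j^r f$ and $j^r g$ are local biholomorphisms of $R_r$ preserving the absolute parallelism $\eta_r$, and since $j^r_x f = j^r_x g$, they coincide on the entire fiber $R_r|_x$.

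To conclude, I invoke the rigidity of absolute parallelisms recalled right after Definition \ref{def rigidity}: the local biholomorphism $(j^r g)^{-1} \circ j^r f$ preserves $\eta_r$ and fixes any chosen point $p \in R_r|_x$, so it is the identity near $p$. This forces $j^r f = j^r g$ as germs of biholomorphisms of $R_r$ near the fiber $R_r|_x$, and projecting along the principal bundle $R_r \to M$ yields $f = g$ as germs at $x$, which is the desired rigidity at order $r$. I expect the only delicate point to be verifying the equivariance identity $j^{r+1}f \circ \Gamma_r = \Gamma_r \circ j^r f$, which is really the infinitesimal content of preservation of the order $(r{+}1)$ structure $R_{r+1}$; once this is in hand, the rest is the standard rigidity mechanism for Cartan geometries.
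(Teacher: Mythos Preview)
Your approach—lifting to the parallelism $(R_r,\eta_r)$ and invoking the rigidity of absolute parallelisms—is the standard Cartan-geometric mechanism and is correct for what it proves. However, there is a gap in your reduction: you take $f,g$ to be \emph{germs} of biholomorphisms preserving $R_{r+1}$, whereas Definition~\ref{def rigidity} is a statement about \emph{$t$-jets} preserving the structure. A $t$-jet whose action on the fibre preserves $R_{r+1}$ need not be the jet of a germ that preserves $R_{r+1}$ in a neighbourhood (that this can always be arranged is essentially the content of the later integrability Theorem~\ref{th integrability}), so ``it suffices to show $f=g$ as germs'' does not settle the jet-level statement and would make the argument circular if used towards integrability.

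The paper's proof avoids this by working purely with a single $(r{+}1)$-jet $j^{r+1}_x f$ preserving the fibre $R_{r+1}|_x$: choose $j^{r+1}_0 g\in R_{r+1}|_x$, let $P\in G^r_\beta$ satisfy $j^r_0(fg)=j^r_0(gP)$, and use the $G^r_\beta$-equivariance of $\Gamma_r$ to obtain $j^{r+1}_0(fg)=\Gamma_r(j^r_0(gP))=j^{r+1}_0(gP)$, so $j^{r+1}_x f$ is recovered from $j^r_x f$. In fact your identity $j^{r+1}f\circ\Gamma_r=\Gamma_r\circ j^r f$, read \emph{pointwise} on the fibre $R_r|_x$ (which makes sense using only the $(r{+}1)$-jet of $f$), is precisely this argument; the subsequent passage to the parallelism $\eta_r$ and its germ-level rigidity is where the unwarranted germ hypothesis becomes essential, and is not needed.
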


\begin{proof}
    Let $x$ be a point of the manifold $M$, and let $j^{r+1}_xf$ be a jet of order $r+1$ at $x$ of a germ of biholomorphism of $M$ at $x$, and fixing $x$. Assume, moreover, that this jet preserves the sub-bundle $R_{r+1}$. Then, by the equivariance property of a jet of a biholomorphism, $j^r_xf$ preserves $R_r$. 

    Let $j_0^{r+1}g$ be an element of $R_{r+1}\vert_x$. The jet $j^{r+1}_xf$ is completely determined by the jet $j^{r+1}_0(fg)$. Let $P$ be the polynomial map of $G^r_\beta$ such that $j^r_0(fg)=j^r_0(gP)$, then by the equivariance property of $\Gamma_r$, we have that 
    $$
    j^{r+1}_0(fg)=j^{r+1}_0(gP).
    $$
    Therefore, $j^{r+1}_xf$ is completely determined by $P$, which is itself completely determined by $j^r_xf$. 
    Hence, the structure $R_{r+1}$ is rigid at order $r$.
\end{proof}

We will now prove the integrability of the $G^r_\beta$-structure of order $r+1$. The strategy is to use the point of view of Benoist, and namely, to use Theorem \ref{Th Frobenius}.

\begin{Theorem}\label{th integrability}
    The $G^r_\beta$-structure of order $r+1$ $R_{r+1}$ is integrable. 
\end{Theorem}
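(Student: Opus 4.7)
The strategy is to invoke Benoist's Frobenius-type Theorem \ref{Th Frobenius} applied to the partial derivatives relation $R\subset J^{r+1}(\mathbb{C}^n,M)$ cut out by $R_{r+1}$: a jet $j^{r+1}_v f\in J^{r+1}(\mathbb C^n,M)$ belongs to $R$ if and only if $j^{r+1}_0(f\circ\theta_v)$ lies in $R_{r+1}|_{f(v)}$. A local holomorphic solution $f:U\subset\mathbb{C}^n\to M$ of $R$ is then exactly a local chart adapted to the $G^r_\beta$-structure $R_{r+1}$, so a collection of such solutions covering $M$ is precisely the integrability atlas required by Definition \ref{Integrability}.

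To apply Theorem \ref{Th Frobenius} I must verify that $R$ is both complete and consistent. Completeness is immediate from the construction of $R_{r+1}$: the projection $\pi_{r+1,r}$ restricts to a bijection $R_{r+1}\to R_r$, inverse to the equivariant section $\Gamma_r$. Parametrising by translations, the projection $J^{r+1}(\mathbb C^n,M)\to J^r(\mathbb C^n,M)$ restricts to a biholomorphism between $R$ and its image, so $R$ is complete.

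Consistency is the crux of the argument, and it is where the further equivariant extension $\Gamma_{r+1}:R_{r+1}\to \mathcal R^{r+2}(M)$ --- produced by Theorem \ref{recurrence pour les structures} when $n\geq 3$ and by Theorem \ref{extension surfaces} when $n=2$ --- is used. Given a point $p=j^{r+1}_v f\in R$, I would lift it through $\Gamma_{r+1}$ to an $(r+2)$-jet, choose a polynomial representative $F$ of this jet, and set $g:=F\circ\theta_{-v}$. Then $j^{r+1}_v g=p$ by construction, and the claim is that the holonomic section $\sigma:=j^{r+1}g$ is tangent to $R$ at $v$. This follows by applying the mechanism of Theorem \ref{on a une geometrie de Cartan} one order higher: the pullback $\eta_{r+1}:=\Gamma_{r+1}^*\chi_{r+2}$ is a Cartan connection on $R_{r+1}$ with values in $T_0\mathbb C^n\oplus\mathfrak g^r_\beta$, and the horizontal distribution $\eta_{r+1}^{-1}(T_0\mathbb C^n\oplus\{0\})$ on $R_{r+1}\to M$ encodes exactly the infinitesimal $G^r_\beta$-equivariant variation of $(r+1)$-jets produced by moving the base point. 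Transporting this distribution back along the translation parametrisation shows that $d_v\sigma$ lies in $T_pR$, giving consistency through an arbitrary $p$.

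With both properties in hand, Theorem \ref{Th Frobenius} yields a unique germ of solution of $R$ through each point, and patching these germs produces the atlas adapted to $R_{r+1}$. The main obstacle I foresee is precisely the identification of the horizontal distribution of $\eta_{r+1}$ with the tangent to a holonomic section $j^{r+1}g$: one must carefully unwind the definition of the canonical form $\chi_{r+2}$ in conjunction with the translation parametrisation of $R$, and check that the $T_0\mathbb C^n$-component of the Cartan connection really corresponds to varying the base point of a local biholomorphism whose $(r+2)$-jet is dictated by $\Gamma_{r+1}$. Everything else is either formal or has already been carried out in the construction of the Cartan geometry.
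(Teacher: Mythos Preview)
Your proposal is correct and follows essentially the same route as the paper: both apply Benoist's Theorem \ref{Th Frobenius} to the partial derivatives relation cut out by $R_{r+1}$, obtain completeness directly from the section $\Gamma_r$, and obtain consistency by invoking the further extension $\Gamma_{r+1}$ together with the argument of Theorem \ref{on a une geometrie de Cartan} one order higher, so that the induced horizontal distribution is tangent to $R_{r+1}$ and holonomic. The only cosmetic difference is that the paper first pulls everything back to $\mathbb C^n$ (via Hartogs) and works in $J^{r+1}(\mathbb C^n,\mathbb C^n)$, whereas you phrase the relation directly in $J^{r+1}(\mathbb C^n,M)$; this does not affect the argument.
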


\begin{proof}
     We will prove that the reduction $R_{r+1}$ is, roughly speaking, complete and consistent in the terminology of Benoist (\cite{Be97}, Section 3.2 p.12, see Definition \ref{def comp et cons}). The Frobenius type theorem of Benoist (\cite{Be97}, Section 3.2 p.12, see Theorem \ref{Th Frobenius}) will provide a germ of solution of the partial derivative relation $R_{r+1}$ at $0\in \mathbb C^n$.

     As we saw before, using Hartogs Theorem (the extensions of our geometric structures are sections of trivial affine bundles), every structure that we have can be seen as a geometric structure on $\mathbb C^n$, invariant under the action of the contraction $\gamma$, so we consider now $R_k$ as a $G^r_\beta$-structure of order $k$ on $\mathbb C^n$ invariant by $\gamma$, for every $r\leq k\leq r+2$ and 
    $$
    \Gamma_k:R_k \longrightarrow R_{k+1}
    $$
    for every $r\leq k\leq r+1$. Notice that we will only use the extension of order $2$ or $R_r$, so we do not need to distinguish between $n\geq 3$ and $n=2$.

    For every integer $k\geq 1$, we consider $J_k:=J^k(\mathbb C^n,\mathbb C^n)$ the space of jets of order $k$ of germs of holomorphic maps from $\mathbb C^n$ to itself. We also consider 
    $$
    E_{r+1}:=\{j^{r+1}_zf\in J_{r+1}~\vert~j^{r+1}_0(f\theta_z)\in R_{r+1}\}.
    $$
    We will show that the partial derivative relation $E_{r+1}$ is complete and consistent in the terminology of Benoist (\cite{Be97} and Definition \ref{def comp et cons}). 
    We denote by $p_{r+1}$ the standard projection 
    $$
    p_{r+1}:J_{r+1}\longrightarrow J_r.
    $$
    Then we will show that $p_{r+1}$, when restricted to $E_{r+1}$, is a biholomorphism onto its image. This, by definition, proves that $E_{r+1}$ is complete.  
    Suppose that $j^{r+1}_zf$ and $j^{r+1}_wg$ are elements of $E_{r+1}$ such that 
    $$
    j^{r}_zf=j^{r}_wg.
    $$
    Then $z=w$ and 
    $$
    j^r_0(f\theta_z)=j^r_0(g\theta_z).
    $$
    But since $j^{r+1}_zf$ and $j^{r+1}_wg$ are elements of $E_{r+1}$, then $j^{r}_0(f\theta_z)$ and $j^{r}_0(g\theta_z)$ are elements of $R_r$ and 
    $$
    \left\{
    \begin{matrix}
        \Gamma(j^r_0(f\theta_z)) & = & j^{r+1}_0(f\theta_z)\\
        \Gamma(j^r_0(g\theta_z)) & = & j^{r+1}_0(g\theta_z)
    \end{matrix}
    \right.
    $$
    Therefore, we have that 
    $$
    j^{r+1}_0(f\theta_z)=j^{r+1}_0(g\theta_z),
    $$
    which implies that 
    $$
    j^{r+1}_zf=j^{r+1}_zg.
    $$
    This proves that $p_{r+1}$ restricted to $E_{r+1}$ is one-to-one, hence a biholomorphism onto its image. We deduce that the partial derivative relation $E_{r+1}$ is complete.

    We will now show that $E_{r+1}$ is consistent. By definition, for any element $x$ of $E_{r+1}$, we need to find a local section of $E_{r+1}\to \mathbb C^n$ passing through $x$ and tangent at $x$ to an holonomic section, i.e. a section that is actually a jet of a holomorphic germ. 

    We consider the $G^r_\beta$ equivariant section  $\Gamma_{r+1}:R_{r+1}\to \mathcal R^{r+2}(M)$. Applying exactly the same proof as in Theorem \ref{on a une geometrie de Cartan}, we can see the map $\Gamma_{r+1}$ as a connection on the bundle $R_{r+1}\to M$. 

    We consider the map 
    $$
    \begin{matrix}
        H_{r+1} & : & E_{r+1} & \longrightarrow & J_{r+2} \\
        && j^{r+1}_zf & \longmapsto & j^{r+2}_z(\widetilde{f\theta_z}\theta_{-z})
    \end{matrix}
    $$
    where $\widetilde{f\theta_z}$ is a germ representing $\Gamma_{r+1}(j^{r+1}_0(f\theta_z))$.

    Since the map $\Gamma_{r+1}$ gives rise to a connection on the bundle $R_{r+1}\to M$, the map $H_{r+1}$ is a horizontal plane field for the bundle $J^{r+1}\to \mathbb C^n$ over $E_{r+1}$ which is tangent to $E_{r+1}$ and holonomic. At each point of $E_{r+1}$, we consider a local section tangent at the point to the horizontal plane given by $H_{r+1}$. This is a section that is tangent to a holonomic plane. 
    Therefore, the partial derivative relation $E_{r+1}$ is consistent.

    By Theorem \ref{Th Frobenius}, at each point of $E_{r+1}$, there exists a unique germ of solution passing through this point. 

    Let $p$ be an element of $R_{r+1}$, and let $f$ be a germ at $0$ of solution of $E_{r+1}$ such that $p=j^{r+1}_0f$. Then  by definition, the biholomorphic germ $f$ maps the translations of $\mathbb C^n$ into $R_{r+1}$, and gives rise to a local isomorphism between the standard $G^r_\beta$-structure or order $r+1$ of $\mathbb C^n$ and $R_{r+1}$.

    This concludes the proof that $R_{r+1}$ is integrable.
\end{proof}

As an immediate consequence, $R_r$ is also integrable. In the language of Cartan geometries, we have the following.

\begin{Corollary}\label{flatness of the cartan geometry}
    The Cartan geometry $\eta_r$ on the bundle $R_r$ (see Theorem \ref{on a une geometrie de Cartan}) is flat.
\end{Corollary}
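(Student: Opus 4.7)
The plan is to apply Theorem \ref{meaning of flatness for cartan geometries}: it suffices to exhibit, in a neighborhood of each point of $M$, a local isomorphism between the Cartan geometry $(R_r,\eta_r)$ and the Maurer-Cartan geometry on the bundle $\widetilde{G^r_\beta}\to \widetilde{G^r_\beta}/G^r_\beta\cong \mathbb{C}^n$. The integrability of $R_{r+1}$ established in Theorem \ref{th integrability} is the input that will provide such local isomorphisms, while the identification with the Maurer-Cartan form will be carried out on the standard structure of $\mathbb{C}^n$.

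I would first analyze the standard situation. The standard $G^r_\beta$-structure of order $r$ on $\mathbb{C}^n$, denoted $R^{std}_r$, is the image of the embedding
$$
\widetilde{G^r_\beta}\hookrightarrow \mathcal{R}^r(\mathbb{C}^n),\quad \theta_zP\longmapsto j^r_0(\theta_zP),
$$
which, thanks to Corollary \ref{biholo with the product}, identifies $R^{std}_r$ with $\widetilde{G^r_\beta}$ as principal $G^r_\beta$-bundles over $\mathbb{C}^n$. By the uniqueness part of Theorem \ref{recurrence pour les structures}, the associated equivariant section $\Gamma^{std}_r:R^{std}_r\to \mathcal{R}^{r+1}(\mathbb{C}^n)$ is necessarily $j^r_0(\theta_zP)\mapsto j^{r+1}_0(\theta_zP)$. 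A direct unraveling of the definition of $\chi_{r+1}$ given in Section \ref{subsection canonical form}, together with Proposition \ref{properties cano form}, then shows that, under the identification above, $(\Gamma^{std}_r)^*\chi_{r+1}$ coincides with the Maurer-Cartan form of $\widetilde{G^r_\beta}$.

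Next I would transfer this to $M$. By Theorem \ref{th integrability}, any point $x\in M$ admits a neighborhood $U$ and a holomorphic chart $\varphi:U\to \mathbb{C}^n$ sending the local standard section of $\mathcal{R}^{r+1}(\mathbb{C}^n)\to \mathbb{C}^n$ to a local section of $R_{r+1}$. The induced map $j^{r+1}\varphi$ is $G^r_\beta$-equivariant, restricts to a bundle isomorphism $R_{r+1}\vert_U\to R^{std}_{r+1}\vert_{\varphi(U)}$, and descends to a bundle isomorphism $R_r\vert_U\to R^{std}_r\vert_{\varphi(U)}$. By uniqueness of the equivariant extension, it intertwines $\Gamma_r$ with $\Gamma^{std}_r$, and by the functoriality of the canonical $1$-form under biholomorphisms it pulls back $(\Gamma^{std}_r)^*\chi_{r+1}$ onto $\eta_r$. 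Combined with the previous paragraph, this furnishes the required local isomorphism between $(R_r,\eta_r)$ and the Maurer-Cartan geometry, and Theorem \ref{meaning of flatness for cartan geometries} yields $\Omega\equiv 0$.

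The main obstacle I anticipate is the coordinate-free core of the first step, namely the explicit verification that $(\Gamma^{std}_r)^*\chi_{r+1}$ is the Maurer-Cartan form of $\widetilde{G^r_\beta}$. The identification is intrinsically natural but requires a careful computation unwinding the trivialization $\xi_r$, the definition of $\chi_{r+1}$, and the twisted group law $(z,P)(w,Q)=(z+P(w),\theta_{-P(w)}P\theta_wQ)$ obtained in Corollary \ref{biholo with the product}; one should check separately that fundamental vector fields of $G^r_\beta$ are sent to the correct elements of the Lie algebra of $\widetilde{G^r_\beta}$ and that the pull-back is right-equivariant with respect to $\underline{Ad}^{r+1}$ restricted to $G^r_\beta$, which matches the adjoint action of $\widetilde{G^r_\beta}$ by Lemma \ref{adjoint est bien adjoint}.
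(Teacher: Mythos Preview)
Your proposal is correct and follows essentially the same route as the paper: use the integrability of $R_{r+1}$ from Theorem \ref{th integrability} to obtain an integrability chart, observe that the chart carries $\Gamma_r$ to the tautological section on the standard side (so that $\eta_r$ pulls back to $(\Gamma^{std}_r)^*\chi_{r+1}$), identify the latter with the Maurer--Cartan form of $\widetilde{G^r_\beta}$ via Corollary \ref{biholo with the product}, and conclude by Theorem \ref{meaning of flatness for cartan geometries}. One small remark: your appeal to the uniqueness in Theorem \ref{recurrence pour les structures} to pin down $\Gamma^{std}_r$ is not quite legitimate, since that theorem is proved on the Hopf manifold $M$ (via cohomological vanishing) and not on $\mathbb{C}^n$; the paper instead argues directly that, because the chosen chart is an integrability chart for $R_{r+1}$, one has $\Gamma_r\circ j^rf=j^{r+1}f$, which immediately identifies the pullback of $\eta_r$ with the canonical form restricted to $\widetilde{G^r_\beta}$---this is the cleaner way to handle that step.
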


\begin{proof}
    According to Theorem \ref{th integrability}, the $G^r_\beta$-structure of order $r+1$ $R_{r+1}$ is integrable. Let $U$ be an open set in $M$ and $V$ be an open set in $\mathbb C^n$ such that there exists an integrability chart $f:V\to U$ for $R_{r+1}$. It implies that $f$ is also an integrability chart for $R_r$ and that the jet map 
    $$
    j^rf:V\times \mathcal D^r(\mathbb C^n)\longrightarrow \mathcal R^r(M)
    $$
    induces a isomorphism of $G^r_\beta$-structures of order $r$, still denoted 
    $$
    j^rf:V\times G^r_\beta\longrightarrow  R_r.
    $$
    Notice that $V\times G^r_\beta$ can be seen as an open set of $\widetilde{G^r_\beta}$ (see Corollary \ref{biholo with the product}). We compute the pullback of $\eta_r$ by the map $j^rf$:
    $$
    \begin{matrix}
        (j^rf)^*\eta_r & = & (j^rf)^*(\Gamma_r^*\chi_{r+1}) \\
        &=& (\Gamma_r\circ j^rf)^*\chi_{r+1} \\
        &=& (j^{r+1}f) ^*\chi_{r+1}
    \end{matrix}
    $$
    where $j^{r+1}f$ is considered as a map $V\times G^r_\beta\longrightarrow R_{r+1}$ and the last equality holds because of the fact that $f$ is an integrability chart for $R_{r+1}$. 

    The canonical form on $\mathcal R^{r+1}(M)$ is pulled back by $j^{r+1}f$ on the canonical form on $\mathcal R^{r+1}(\mathbb C^n)$. But now according to Corollary \ref{biholo with the product}, the restriction of the canonical form of $\mathcal R^{r+1}(\mathbb C^n)$ to the group $\widetilde{G^r_\beta}$ coincides with the Maurer Cartan form on the latter. Therefore, the form $\eta_r$ is pulled back on the Maurer Cartan form of $\widetilde{G^r_\beta}$, meaning that $j^rf$ is an isomorphism of Cartan geometries. By Theorem \ref{meaning of flatness for cartan geometries}, $\eta_r$ is flat.
 \end{proof}

\subsection{Poincaré-Dulac theorem}\label{subsect Poincare Dulac}

We will now give a new proof of the Poincaré-Dulac Theorem in the context of global holomorphic contractions of $\mathbb C^n$ in $0$. 

\begin{Theorem}\label{th PoincareDulac}
    Let $\gamma$ be an invertible contraction of $\mathbb C^n$ in $0$, where $n\geq2$, let $\beta=(\beta_1,...,\beta_n)$ be the collection of eigenvalues of $d_0\gamma$, and let $r$ be the maximal length of the resonances of $\beta$. Then there exists a global biholomorphism $\phi:\mathbb C^n\to \mathbb C^n$ such that $\phi^{-1}\gamma \phi$ is an element of $G^r_\beta$.  
\end{Theorem}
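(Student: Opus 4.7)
The plan is to exploit the integrability of the $G^r_\beta$-structure of order $r+1$ (Theorem \ref{th integrability}) together with its rigidity at order $r$ to produce a global chart conjugating $\gamma$ to a polynomial in $G^r_\beta$. Since all structures built in the previous sections live $\gamma$-invariantly on $\mathbb C^n$, integrability furnishes a holomorphic chart $\phi: V\to\mathbb C^n$ on a neighborhood $V$ of $0$, with $\phi(0)=0$, sending the standard $G^r_\beta$-structure of order $r+1$ on $V$ to the $\gamma$-invariant structure on $\phi(V)$.

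The germ $\psi := \phi^{-1}\gamma\phi$ at $0$ is then a local biholomorphism fixing $0$ that preserves the standard $G^r_\beta$-structure, so its $r$-jet $P := j^r_0\psi$ lies in $G^r_\beta$. I view $P$ as a polynomial automorphism of $\mathbb C^n$ in its own right. Both $P$ and $\psi$ preserve the standard structure, fix $0$, and have the same $r$-jet at $0$. Rigidity at order $r$ forces their $t$-jets at $0$ to coincide for every $t>r$, so their formal Taylor series agree at $0$; by the identity principle for holomorphic germs, $\psi=P$ as germs. Hence $\gamma\circ\phi=\phi\circ P$ on a neighborhood of $0$.

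To extend $\phi$ globally I observe that any element of $G^r_\beta$ is triangular with diagonal entries $\beta_j$ of modulus less than one, so $P$ is itself a holomorphic contraction of $\mathbb C^n$ at $0$ whose basin of attraction is all of $\mathbb C^n$ (the first coordinate contracts, and each subsequent one is controlled by the preceding ones). I shrink $V$ to a $P$-invariant neighborhood $U$ of $0$. For any $z\in\mathbb C^n$, choose $k$ large enough that $P^k(z)\in U$ and set $\tilde\phi(z):=\gamma^{-k}(\phi(P^k(z)))$; the intertwining relation $\gamma\circ\phi=\phi\circ P$ makes this independent of the choice of $k$. The resulting $\tilde\phi:\mathbb C^n\to\mathbb C^n$ is a holomorphic local biholomorphism satisfying $\gamma\circ\tilde\phi=\tilde\phi\circ P$. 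Injectivity follows by iterating $P$ to reduce to $U$ and using injectivity of $\phi$ there; surjectivity follows from $\bigcup_{k\geq 0}\gamma^{-k}(\tilde\phi(U))=\mathbb C^n$, which holds because $\gamma$ is a contraction and $\tilde\phi(U)$ is an open neighborhood of $0$. Thus $\tilde\phi$ is the desired global biholomorphism with $\tilde\phi^{-1}\gamma\tilde\phi=P\in G^r_\beta$.

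The main obstacle is the second step: passing from the statement ``in adapted coordinates $\gamma$ has $r$-jet in $G^r_\beta$ at $0$'' to the statement ``in adapted coordinates $\gamma$ \emph{is} an element of $G^r_\beta$.'' Rigidity at order $r$ by itself only determines the $r$-jet of a structure-preserving biholomorphism, so one must exploit integrability, which makes rigidity applicable at \emph{every} order in the flat standard model, and then invoke the identity principle to upgrade jet equality to germ equality. Once the germ identity $\psi=P$ is established, the globalization of $\phi$ is a standard dynamical argument enabled by the fact that both $\gamma$ and $P$ are attracting contractions with basin $\mathbb C^n$.
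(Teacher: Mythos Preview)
Your proof is correct and follows the same overall strategy as the paper: take an integrability chart $\phi$ for the $\gamma$-invariant $G^r_\beta$-structure of order $r+1$, show that $\psi=\phi^{-1}\gamma\phi$ coincides with its $r$-jet $P\in G^r_\beta$, and then globalize using the contracting dynamics. Your rigidity argument for the step $\psi=P$ is slightly more explicit than the paper's, which instead observes that the $(r+1)$-jet of $\theta_{-\psi(z)}\psi\theta_z$ lies in $G^r_\beta$ at every $z$, so all order-$(r+1)$ partial derivatives of $\psi$ vanish identically and $\psi$ is a polynomial of degree at most $r$; your globalization is the hands-on version of the paper's terse appeal to the induced isomorphism of Hopf manifolds.

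One imprecision to fix: elements of $G^r_\beta$ do \emph{not} have diagonal entries $\beta_j$ in general—the $a_j$ are arbitrary nonzero complex numbers. The correct reason your particular $P$ is a global contraction is that $d_0P=(d_0\phi)^{-1}(d_0\gamma)(d_0\phi)$ is conjugate to $d_0\gamma$, hence has eigenvalues $\beta_j$ of modulus less than $1$; combined with the triangular form of $P$ (first coordinate linear, each subsequent coordinate linear in its own variable plus a polynomial in the previous ones) this gives the basin-of-attraction property you invoke.
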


\begin{proof}
     According to Theorem \ref{th integrability}, there exists an integrable $G^r_\beta$-structure on $\mathbb C^n$ invariant under the action of $\gamma$, denoted by $R_r$. Let $U,V$ be two neighborhoods of $0\in\mathbb C^n$ and $\phi:U\to V$ be an integrable chart of $R_r$ on $V$ mapping $0$ to $0$.
    Now, since $\gamma$ preserves $R_r$, $\gamma\circ \phi$ is also an integrable chart. Hence, for every $z$ close enough to $0$, $j_0^{r}(\phi\circ \theta_{\phi^{-1}\gamma\phi(z)})$ and $j_0^{r}(\gamma\circ\phi\circ \theta_z)$ are in the same $G^r_\beta$ orbit. We deduce that for every $z$ close enough to $0$, $j_0^{r}(\theta_{-\phi^{-1}\gamma\phi(z)}\circ \phi^{-1}\circ\gamma\circ\phi\circ\theta_z)$ is an element of $G^r_\beta$. But this implies that the germ $\phi^{-1}\circ\gamma\circ\phi$ is itself an element of $G^r_\beta$. Therefore, it extends to a polynomial map $P$ in $G^r_\beta$.

    To conclude the proof, since $\gamma$ is a contraction in $0$, $P$ is also a contraction in $0$ and the germ of biholomorphism $\phi^{-1}$ gives a biholomorphism between the Hopf manifolds $M$ and $(\mathbb C^n\setminus\{0\})/<P>$. Therefore, $\phi^{-1}$, and \textit{a fortiori} $\phi$ are globally defined biholomorphic maps.
\end{proof}

Notice that here, we produce new coordinates in which $\gamma$ reads as a sub-resonant polynomial map. The Poincaré-Dulac theorem is slightly stronger, in the sense that we can find a system of coordinates in which $\gamma$ reads as a resonant polynomial map. But, starting with Theorem \ref{th PoincareDulac} and following the algebraic part of the proof of the Poincaré-Dulac theorem, we can find a polynomial change of coordinates (actually an element of $G^r_\beta$) such that $\gamma$ reads like a resonant polynomial map. In other words, in the conjugacy class of $\gamma$ in $G^r_\beta$, there is a resonant polynomial contraction. One of these resonant polynomial contractions is what is called a Poincaré-Dulac normal form of the contraction.  

If $\gamma$ is a contraction of $\mathbb C^n$ at $0$, and $\gamma$ reads in the standard coordinates of $\mathbb C^n$ like an element of its associated group $G^r_\beta$, then $\gamma$ obviously preserves all the standard $G^k_\beta$-structures of order $k$ of $\mathbb C^n$. All of those structures descend to the Hopf manifold and are obviously integrable. We deduce the following corollary.

\begin{Corollary}\label{exist str integrables}
    Let $M$ be a Hopf manifold given by a contraction $\gamma$ that induces the set of eigenvalues $\beta=(\beta_1,...,\beta_n)$ and the maximal length $r$. Then 
    \begin{enumerate}
        \item there exist integrable $G^k_\beta$-structures of order $k$ on $M$ for every $k\geq 1$. Moreover, those structures can be chosen so that their pullback on $\mathbb C^n\setminus\{0\}$ extends to $\mathbb C^n$.
        \item there exists a flat Cartan geometry on $M$ with model $(\widetilde{G^r_\beta},G^r_\beta)$. Moreover, this structure can be chosen so that its pullback on $\mathbb C^n\setminus\{0\}$ extends to $\mathbb C^n$.
    \end{enumerate}
\end{Corollary}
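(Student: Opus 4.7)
The plan is to use Theorem \ref{th PoincareDulac} to reduce immediately to the case where the contraction is already a sub-resonant polynomial map, and then exhibit the standard $G^k_\beta$-structures of $\mathbb C^n$ as the sought-after objects. First I would invoke Theorem \ref{th PoincareDulac} to obtain a global biholomorphism $\phi:\mathbb C^n\to\mathbb C^n$ such that $P:=\phi^{-1}\gamma\phi$ lies in $G^r_\beta$. Since the Hopf manifolds $(\mathbb C^n\setminus\{0\})/\langle\gamma\rangle$ and $(\mathbb C^n\setminus\{0\})/\langle P\rangle$ are biholomorphic via $\phi$, and since all our constructions are pulled back/pushed forward through $\phi$, it suffices to treat the case $\gamma=P\in G^r_\beta$.

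The heart of the argument is then the observation already used in Section \ref{geo str on surf}: for every $k\geq 1$, the standard section $z\mapsto j^k_0\theta_z$ of $\mathcal R^k(\mathbb C^n)\to\mathbb C^n$ defines the standard $G^k_\beta$-structure on $\mathbb C^n$, and this structure is preserved by $\gamma=P$. Indeed, for every $z\in\mathbb C^n$,
$$
j^k_0(P\theta_z)=j^k_0\bigl(\theta_{P(z)}\cdot(\theta_{-P(z)}P\theta_z)\bigr)
$$
and Lemma \ref{conj trans G beta} ensures that $\theta_{-P(z)}P\theta_z$ lies in $G^r_\beta$, hence its $k$-jet at $0$ lies in $G^k_\beta$. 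Therefore the standard $G^k_\beta$-structure is $\langle\gamma\rangle$-invariant on $\mathbb C^n$, restricts to $\mathbb C^n\setminus\{0\}$, and descends to a $G^k_\beta$-structure of order $k$ on $M$. The identity chart on $\mathbb C^n$ is, by construction, an integrability chart, so the descended structures on $M$ are integrable in the sense of Definition \ref{Integrability}. The extension property from $\mathbb C^n\setminus\{0\}$ to $\mathbb C^n$ is automatic since the pullback is by construction the standard structure on the whole of $\mathbb C^n$.

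For the second item, I would use the compatibility of these integrable structures: the $G^r_\beta$-structure of order $r$ obtained above extends canonically to a structure of order $r+1$, and by Theorem \ref{on a une geometrie de Cartan} this produces a holomorphic Cartan geometry $\eta_r$ of type $(\widetilde{G^r_\beta},G^r_\beta)$ on $M$. The integrability established in the previous paragraph, together with Corollary \ref{flatness of the cartan geometry}, immediately implies that $\eta_r$ is flat. Alternatively, one can observe directly that on the total space $\mathbb C^n\times G^r_\beta\cong\widetilde{G^r_\beta}$ of the standard structure the form $\eta_r$ coincides with the Maurer–Cartan form of $\widetilde{G^r_\beta}$ (this is precisely the content of the pullback computation in the proof of Corollary \ref{flatness of the cartan geometry}), which is flat by general principles. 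The extension property for the Cartan geometry follows from the same observation: the pullback to $\mathbb C^n\setminus\{0\}$ is the restriction of an object globally defined on all of $\mathbb C^n$.

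I do not expect a genuine obstacle here: once Theorem \ref{th PoincareDulac} is in hand the statement becomes essentially tautological, the only care required being to check that \emph{the} invariant structure on $\mathbb C^n$ is really the standard one and that it is this structure (rather than some other integrable representative) whose descent one wants on $M$. The mild subtlety worth explicitly addressing is that the constructions of Sections \ref{geo str ord 1}--\ref{geo str on surf} produced, a priori, only a unique $G^k_\beta$-structure extending the initial $G^1_\beta$-structure; the point of the corollary is that \emph{some} $G^1_\beta$-structure, namely the standard one in adapted coordinates, does produce a tower of integrable structures, so integrability is a feature available for a specific choice and not guaranteed for every $G^1_\beta$-structure a priori.
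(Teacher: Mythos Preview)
Your proposal is correct and follows essentially the same approach as the paper: invoke Theorem \ref{th PoincareDulac} to reduce to a contraction $\gamma\in G^r_\beta$, observe (via Lemma \ref{conj trans G beta}) that such a $\gamma$ preserves the standard $G^k_\beta$-structures on $\mathbb C^n$, and note that these descend to integrable structures on $M$ with the required extension property. Your treatment of part (2) via Theorem \ref{on a une geometrie de Cartan} and Corollary \ref{flatness of the cartan geometry} is also in line with the paper, and your closing remark about the role of the \emph{choice} of $G^1_\beta$-structure is a helpful clarification that the paper leaves implicit.
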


\newpage

\Large
\noindent\textbf{Acknowledgments}
\normalsize

We are very grateful to Sorin Dumitrescu and Laurent Meersseman for their helpful support throughout the development of this work. We also thank Bertrand Deroin, Frank Loray and Paul Boureau for the interesting discussions around the subject.

\nocite{*}
    \footnotesize{\bibliographystyle{plain}\bibliography{main}}

\begin{thebibliography}{10}

\bibitem{Vi91}
D.~V. Alekseevskij, A.~M. Vinogradov, and V.~V. Lychagin.
\newblock Basic ideas and concepts of differential geometry.
\newblock In {\em Geometry, {I}}, volume~28 of {\em Encyclopaedia Math. Sci.}, pages 1--264. Springer, Berlin, 1991.

\bibitem{A88}
V.~I. Arnold.
\newblock {\em Geometrical methods in the theory of ordinary differential equations}, volume 250 of {\em Grundlehren der mathematischen Wissenschaften [Fundamental Principles of Mathematical Sciences]}.
\newblock Springer-Verlag, New York, second edition, 1988.
\newblock Translated from the Russian by Joseph Sz\"ucs [J\'ozsef M. Sz\H ucs].

\bibitem{Be97}
Yves Benoist.
\newblock Orbites des structures rigides (d'apr\`es {M}. {G}romov).
\newblock In {\em Integrable systems and foliations/{F}euilletages et syst\`emes int\'egrables ({M}ontpellier, 1995)}, volume 145 of {\em Progr. Math.}, pages 1--17. Birkh\"auser Boston, Boston, MA, 1997.

\bibitem{BisDum2020}
Indranil Biswas, Sorin Dumitrescu, and Georg Schumacher.
\newblock Deformation theory of holomorphic {C}artan geometries.
\newblock {\em Indag. Math. (N.S.)}, 31(3):512--524, 2020.

\bibitem{Bo81}
Ciprian Borcea.
\newblock Some remarks on deformations of {H}opf manifolds.
\newblock {\em Rev. Roumaine Math. Pures Appl.}, 26(10):1287--1294, 1981.

\bibitem{Bou2025}
Paul Boureau.
\newblock Normal forms and geometric structures on hopf manifolds.

\bibitem{Du11}
M.~Brunella, S.~Dumitrescu, P.~Eyssidieux, A.~Glutsyuk, L.~Meersseman, and M.~Nicolau.
\newblock {\em Complex manifolds, foliations and uniformization}, volume 34/35 of {\em Panoramas et Synth\`eses [Panoramas and Syntheses]}.
\newblock Soci\'et\'e{} Math\'ematique de France, Paris, 2011.
\newblock Courses from the Summer School ``Uniformisation de Familles de Vari\'et\'es Complexes'' held in Dijon, August 31--September 11, 2009.

\bibitem{Dou60}
Adrien Douady.
\newblock D{\'e}formations r{\'e}guli{\`e}res.
\newblock {\em S{\'e}minaire Henri Cartan}, 13(1):1--8, 1960.

\bibitem{Fr57}
Jean Frenkel.
\newblock Cohomologie non ab\'elienne et espaces fibr\'es.
\newblock {\em Bull. Soc. Math. France}, 85:135--220, 1957.

\bibitem{F70}
Avner Friedman.
\newblock {\em Foundations of modern analysis}.
\newblock Holt, Rinehart and Winston, Inc., New York-Montreal, Que.-London, 1970.

\bibitem{Go2014}
Peter~B Gothen.
\newblock Representations of surface groups and higgs bundles.
\newblock {\em Moduli Spaces, LMS Lecture Notes Series}, 411:151--178, 2014.

\bibitem{GR04}
Hans Grauert and Reinhold Remmert.
\newblock {\em Theory of {S}tein spaces}.
\newblock Classics in Mathematics. Springer-Verlag, Berlin, 2004.
\newblock Translated from the German by Alan Huckleberry, Reprint of the 1979 translation.

\bibitem{Gr88}
Michael Gromov.
\newblock Rigid transformations groups.
\newblock In {\em G\'eom\'etrie diff\'erentielle ({P}aris, 1986)}, volume~33 of {\em Travaux en Cours}, pages 65--139. Hermann, Paris, 1988.

\bibitem{Gui65}
Victor Guillemin.
\newblock The integrability problem for g-structures.
\newblock {\em Transactions of the American Mathematical Society}, 116:544--560, 1965.

\bibitem{Ha85}
A.~Haefliger.
\newblock Deformations of transversely holomorphic flows on spheres and deformations of {H}opf manifolds.
\newblock {\em Compositio Math.}, 55(2):241--251, 1985.

\bibitem{Ho48}
H.~Hopf.
\newblock Zur {T}opologie der komplexen {M}annigfaltigkeiten.
\newblock In {\em Studies and {E}ssays {P}resented to {R}. {C}ourant on his 60th {B}irthday, {J}anuary 8, 1948}, pages 167--185. Interscience Publishers, New York, 1948.

\bibitem{I60}
Mikio Ise.
\newblock On the geometry of {H}opf manifolds.
\newblock {\em Osaka Math. J.}, 12:387--402, 1960.

\bibitem{Ko61}
Shoshichi Kobayashi.
\newblock Canonical forms on frame bundles of higher order contact.
\newblock In {\em Proc. {S}ympos. {P}ure {M}ath., {V}ol. {III}}, pages 186--193. Amer. Math. Soc., Providence, RI, 1961.

\bibitem{Ko95}
Shoshichi Kobayashi.
\newblock {\em Transformation groups in differential geometry}.
\newblock Classics in Mathematics. Springer-Verlag, Berlin, 1995.
\newblock Reprint of the 1972 edition.

\bibitem{LN96}
Jean~Jacques Loeb and Marcel Nicolau.
\newblock Holomorphic flows and complex structures on products of odd-dimensional spheres.
\newblock {\em Math. Ann.}, 306(4):781--817, 1996.

\bibitem{Ma96}
Daniel Mall.
\newblock Contractions, {F}redholm operators and the cohomology of vector bundles on {H}opf manifolds.
\newblock {\em Arch. Math. (Basel)}, 66(1):71--76, 1996.

\bibitem{Mc2016}
Benjamin McKay.
\newblock The {H}artogs extension problem for holomorphic parabolic and reductive geometries.
\newblock {\em Monatsh. Math.}, 181(3):689--713, 2016.

\bibitem{MP2010}
Benjamin McKay and Alexey Pokrovskiy.
\newblock Locally homogeneous structures on {H}opf surfaces.
\newblock {\em Indiana Univ. Math. J.}, 59(4):1491--1539, 2010.

\bibitem{Mo2006}
Pierre Molino.
\newblock {\em Th{\'e}orie des G-structures: le probl{\`e}me d'{\'e}quivalence}, volume 588.
\newblock Springer, 2006.

\bibitem{OV17}
Liviu Ornea and Misha Verbitsky.
\newblock Embedding of {LCK} manifolds with potential into {H}opf manifolds using {R}iesz-{S}chauder theorem.
\newblock In {\em Complex and symplectic geometry}, volume~21 of {\em Springer INdAM Ser.}, pages 137--148. Springer, Cham, 2017.

\bibitem{OVLCK17}
Liviu Ornea and Misha Verbitsky.
\newblock Embedding of lck manifolds with potential into hopf manifolds using riesz-schauder theorem.
\newblock {\em Complex and symplectic geometry}, pages 137--148, 2017.

\bibitem{OV23}
Liviu Ornea and Misha Verbitsky.
\newblock Non-linear hopf manifolds are locally conformally k{\"a}hler.
\newblock {\em The Journal of Geometric Analysis}, 33(7):201, 2023.

\bibitem{OV2024}
Liviu Ornea and Misha Verbitsky.
\newblock Mall bundles and flat connections.
\newblock Online first, 2024.

\bibitem{Pa83}
Viktor~Pavlovich Palamodov.
\newblock Deformations of hopf manifolds and the poincar{\'e}—dulac theorem.
\newblock {\em Functional Analysis and Its Applications}, 17(4):252--259, 1983.

\bibitem{Re69}
L~Reich.
\newblock Das typenproblem bei formal-biholomorphen abbildungen mit anziehendem fixpunkt.
\newblock {\em Mathematische Annalen}, 179(3):227--250, 1969.

\bibitem{Sh2000}
R.W. Sharpe and S.S. Chern.
\newblock {\em Differential Geometry: Cartan's Generalization of Klein's Erlangen Program}.
\newblock Graduate Texts in Mathematics. Springer New York, 2000.

\bibitem{St64}
Shlomo Sternberg.
\newblock {\em Lectures on differential geometry}.
\newblock Prentice-Hall, Inc., Englewood Cliffs, NJ, 1964.

\bibitem{W81}
Joachim Wehler.
\newblock Versal deformation of {H}opf surfaces.
\newblock {\em J. Reine Angew. Math.}, 328:22--32, 1981.

\end{thebibliography}

\vspace{0.5cm}
\texttt{UNIVERSITÉ COTE D'AZUR, LJAD, FRANCE}

Email address: \href{mailto:matthieu.madera@univ-cotedazur.fr}{matthieu.madera@univ-cotedazur.fr}

\end{document}